\documentclass[11pt]{article}
\usepackage{latexsym,amsfonts,amssymb,amsmath,amsthm}
\usepackage{graphicx}
\usepackage{subcaption}
\usepackage[usenames,dvipsnames]{color}
\usepackage{ulem}
\usepackage{caption}
\usepackage{float}
\usepackage{xcolor}
\usepackage{bbm}
\usepackage{comment}
\usepackage{appendix}

\begin{document}

\newtheorem{tm}{Theorem}[section]
\newtheorem{prop}[tm]{Proposition}
\newtheorem{defin}{Definition}[section]
\newtheorem{coro}{Corollary}[section]
\newtheorem{lem}{Lemma}[section]
\newtheorem{assumption}{Assumption}[section]
\newtheorem{rk}{Remark}[section]
\newtheorem{nota}{Notation}[section]
\numberwithin{equation}{section}

\newcommand{\mg}{\color{RedViolet}}

    \newcommand{\lb}{\label}
  \newcommand{\beq}{\begin{equation}}
    \newcommand{\eeq}{\end{equation}}

\newcommand{\stk}[2]{\stackrel{#1}{#2}}
\newcommand{\dwn}[1]{{\scriptstyle #1}\downarrow}
\newcommand{\upa}[1]{{\scriptstyle #1}\uparrow}
\newcommand{\nea}[1]{{\scriptstyle #1}\nearrow}
\newcommand{\sea}[1]{\searrow {\scriptstyle #1}}
\newcommand{\csti}[3]{(#1+1) (#2)^{1/ (#1+1)} (#1)^{- #1
 / (#1+1)} (#3)^{ #1 / (#1 +1)}}
\newcommand{\RR}[1]{\mathbb{#1}}

\newcommand{\rd}{{\mathbb R^d}}
\newcommand{\ep}{\varepsilon}
\newcommand{\rr}{{\mathbb R}}
\newcommand{\alert}[1]{\fbox{#1}}
\newcommand{\eqd}{\sim}

\def\un{\underline}
\def\ba{\overline}

\def\p{\partial}
\def\R{{\mathbb R}}
\def\N{{\mathbb N}}
\def\Q{{\mathbb Q}}
\def\C{{\mathbb C}}
\def\l{\left(}
\def\r{ \right) }
\def\t{\tau}
\def\k{\kappa}
\def\a{\alpha}
\def\la{\lambda}
\def\De{\Delta}
\def\de{\delta}
\def\ga{\gamma}
\def\Ga{\Gamma}
\def\ep{\varepsilon}
\def\eps{\varepsilon}
\def\si{\sigma}
\def\Re {{\rm Re}\,}
\def\Im {{\rm Im}\,}
\def\E{{\mathbb E}}
\def\P{{\mathbb P}}
\def\Z{{\mathbb Z}}
\def\D{{\mathbb D}}
\def\calS{\mathcal{S}}
\def\O{{\Omega_T}}
\def\calC{\mathcal{C}}
\def\calL{\mathcal{L}}
\def\calX{\mathcal{X}}

\newcommand{\ceil}[1]{\lceil{#1}\rceil}

\newcommand{\bbR}{{\mathbb R}}

\newcommand{\calB}{{\mathcal B}}

\title{Global existence of weak solutions to chemotaxis models with porous medium diffusion,
linear production, and logistic source on 
$\mathbb{R}^N$}

\author{
Zulaihat Hassan  \\
Department of Mathematics \\
Michigan State University, MI 48823, U.S.A. }

\date{}
\maketitle

\begin{abstract}
 This paper investigates the global solvability, boundedness, and uniqueness of weak solutions to the chemotaxis system
\begin{equation*}
\begin{cases}
u_t = \Delta u^m - \chi \nabla \cdot (u \nabla v) + u(a - b u), & \text{in } (0,\infty)\times\mathbb{R}^N, \\
\tau v_t = \Delta v - \lambda v + \mu u, & \text{in } (0,\infty)\times\mathbb{R}^N,
\end{cases}
\end{equation*}
where \(m>1\), \(\tau\in\{0,1\}\), \(\lambda,\mu,a,b>0\), and \(\chi\in\mathbb{R}\). For every \(m>1\), we establish the existence of weak solutions for initial data that are not necessarily integrable, although such solutions need not be bounded in general. We then show that globally bounded weak solutions exist in the parabolic-parabolic case \((\tau=1)\) when \(m>\frac{2N}{N+2}\), and also when \(1<m\le \frac{2N}{N+2}\) provided that the logistic damping coefficient \(b\) is sufficiently large. In the parabolic-elliptic case \((\tau=0)\), we prove the existence of globally bounded weak solutions when \(m>2-\frac{2}{N}\), and also when \(1<m\le 2-\frac{2}{N}\) provided that \(b\) is sufficiently large. Finally, for \(1<m\le 3\), we prove uniqueness of weak solutions that are H\"older continuous up to the initial time.
\end{abstract}

\medskip

\noindent {\bf Keywords:} {Chemotaxis system; 
 porous medium equation; logistic source; weak solutions; global existence; boundedness; uniqueness; non-integrable initial data.}

\medskip

\noindent{\bf AMS Subject Classification (2020): } 35B45,
35D30, 35J70, 35K65, 35Q92, 92C17    

\tableofcontents
\section{Introduction}

\subsection{Overview}

In this paper, we consider a chemotaxis system with nonlinear diffusion of porous-medium type, motivated by enhanced cell stress in densely packed regions, together with a logistic source term accounting for cell proliferation and death, and linear production of the chemical. The system is posed on the whole space \(\mathbb R^N\) and is given by
\begin{equation}
\label{main-eq}
\begin{cases}
u_t = \Delta u^m - \chi \nabla \cdot (u \nabla v) + u(a - b u),\quad & x \in \mathbb{R}^N, \,\, t>0, \\
\tau v_t = \Delta v - \lambda v + \mu u, \quad & x \in \mathbb{R}^N,\,\,  t>0,\\
u(0,x) = u_0(x),\,\, \tau v(0, x) = \tau v_0(x), \quad & x\in \R^N,
\end{cases}
\end{equation}
where \(m>1\), \(a,b,\lambda,\mu>0\), \(\tau \in \{0,1\}\), and \(\chi\in \mathbb R\). We consider initial data that are not necessarily integrable and satisfy
\begin{equation}\label{initial-cond-eq}
u_0(\cdot)\in L^\infty(\R^N),\quad \tau v_0(\cdot)\in W^{1,\infty}(\R^N).
\end{equation}
The variable \(u\) represents the cell density, while \(v\) denotes the concentration of the chemical signal. In \eqref{main-eq}, the case \(\tau=0\) corresponds to the parabolic-elliptic system, which models a situation in which the chemical substance diffuses very fast, whereas \(\tau=1\) corresponds to the fully parabolic case. The constants \(\lambda\) and \(\mu\) represent the degradation and production rates of the chemical substance, respectively. System \eqref{main-eq} can be viewed as a special case of the general Keller--Segel model introduced by Keller and Segel in the 1970s \cite{keller1970initiation,keller1971model} to describe chemotactic movement of a single species in response to a single chemical signal. Chemotaxis, namely the directed motion of organisms toward or away from chemical cues in their environment, is observed in many microorganisms, including {\it Escherichia coli}, and is involved in important biological processes such as aggregation, pattern formation, and population stabilization.

While global existence and uniqueness of weak solutions to \eqref{main-eq} have been studied under integrability assumptions on the initial data, we establish global existence, boundedness, as well as uniqueness in a suitable H\"older class, without requiring the initial data to be integrable. Studying chemotaxis models on \(\mathbb R^N\) with non-integrable initial data is important both mathematically and biologically. In many realistic situations, the initial population or chemical distribution may not have finite total mass, for instance when the species is spread over a large habitat, approaches a nonzero background state at infinity, or exhibits front-like profiles. Such settings cannot be described within the framework of integrable initial data. From the mathematical point of view, allowing non-integrable data significantly enlarges the class of admissible solutions and makes it possible to investigate large-time behavior, spreading phenomena, and related dynamical properties in spatially unbounded environments; see \cite{griette2023speed,hassan2025spreading,henderson2024traveling,ji2021reducing,salako2016spreading,salako2018existence,shen2021spreading}.

\smallskip

Consider the counterpart of \eqref{main-eq} on a smooth bounded domain with homogeneous Neumann boundary conditions:
\begin{equation}
\label{special-eq2}
\begin{cases}
u_t=\Delta u^m-\chi\nabla \cdot(u\nabla v)+u(a-bu),\quad &x\in\Omega, t>0,\\[1mm]
\tau v_t=\Delta v+\mu u-\lambda v,\quad &x\in\Omega, t>0,\\[1mm]
\frac{\partial u}{\partial \nu}=\frac{\partial v}{\partial \nu} =0, &x\in \partial \Omega, t>0, \\[1mm]
u(0,x)=u_0(x),\quad \tau v(0,x)=\tau v_0(x),\quad &x\in\Omega.
\end{cases}
\end{equation}
Systems of the form \eqref{special-eq2} have been widely studied in both the parabolic-elliptic case \((\tau=0)\) and the parabolic-parabolic case \((\tau=1)\); see, for instance,
\cite{cao2014boundedness,cieslak2008finite,li2015global,tao2012boundedness,wang2014quasilinear,winkler2010does,yang2015boundedness,zhang2015boundedness}.
For the parabolic--elliptic problem with logistic source, \cite{wang2014quasilinear} proved global existence of bounded weak solutions in the diffusion-dominated range
\(
m>2-\frac{2}{N},
\)
and also obtained boundedness in the range
\(
1<m\le 2-\frac{2}{N}
\)
under a sufficiently strong logistic damping condition. Thus, logistic damping can compensate for weaker diffusion and extend global boundedness into regimes where diffusion alone may not be sufficient. In contrast, in the absence of logistic damping, that is, when \(a=b=0\), \cite{cieslak2008finite} showed that finite-time blow-up may occur for suitable initial data in the subcritical diffusion range
\(
m<2-\frac{2}{N}.
\)
More precisely, there are initial data for which the corresponding solution satisfies
\begin{equation}\label{blowup}
    \limsup_{t\nearrow T_{\max}}\|u(\cdot,t)\|_{L^\infty(\Omega)}=\infty
\end{equation}
for some \(T_{\max}<\infty\). For the parabolic-parabolic problem with logistic source, \cite{yang2015boundedness} established the boundedness of nonnegative weak solutions under a sufficiently strong logistic damping condition, while \cite{zhang2015boundedness} obtained global bounded classical solutions under the condition
\(
m>\frac{2N}{N+2}.
\)
In the semilinear case \(m=1\), Tello and Winkler \cite{tello2007chemotaxis} established the existence of a global weak solution, possibly unbounded, for any \(b>0\). Their result further highlights the role of logistic damping in preventing finite-time blow-up.

\smallskip

On \(\mathbb R^N\), the system \eqref{main-eq} has also been studied for integrable and bounded initial data in the absence of logistic source terms, namely when \(a=b=0\). In the parabolic-elliptic case \((\tau=0)\), Sugiyama \cite{sugiyama2007time} established the existence of globally bounded weak solutions when
\(
m>2-\frac{2}{N},
\)
and also obtained global existence for sufficiently small initial data, in a suitable sense, in the range
\(
1<m\le 2-\frac{2}{N}.
\)
On the other hand, Sugiyama \cite{sugiyama2006global} constructed suitable initial data for which solutions blow up in finite time when
\(
1<m<2-\frac{2}{N}
\quad\text{and}\quad N\ge 3,
\)
in the sense of \eqref{blowup}. For the parabolic-parabolic case \((\tau=1)\), Sugiyama \cite{sugiyama2007time} proved the existence of global weak solutions for \(m\ge 2\). This condition was later improved by Ishida and Yokota \cite{ishida2012global}, who established global existence for
\(
m>2-\frac{2}{N}.
\)

For both \(\tau\in\{0,1\}\), using the Simon--Dubinski\u{\i} compactness theorem, we show that for every \(m>1\), there exists at least one global weak solution of \eqref{main-eq}, which may in general be unbounded. Furthermore, we prove the existence of globally bounded weak solutions whenever \(b>b_m\). In the case \(\tau=1\), we have \(b_m=0\) when \(m>\frac{2N}{N+2}\), while \(b_m>0\) when \(1<m\le \frac{2N}{N+2}\). Similarly, in the case \(\tau=0\), we have \(b_m=0\) when \(m>2-\frac{2}{N}\), and $b_m>0$ when \(1<m\le 2-\frac{2}{N}\). Thus, the logistic source provides a damping mechanism strong enough to guarantee global boundedness.

Our proof of the existence of globally bounded nonnegative weak solutions is based on weighted energy and localization techniques, together with a continuity-type argument introduced in \cite{hassan2025global} for the study of globally bounded weak solutions in the case where the chemical substance is consumed by the mobile species:
\begin{equation}
\label{special-eq3}
\begin{cases}
u_t = \Delta u^m - \chi \nabla \cdot (u \nabla v) + u(a - b u),\quad & x \in \mathbb{R}^N, \,\, t>0, \\
v_t = \Delta v - uv, \quad & x \in \mathbb{R}^N,\,\,  t>0,\\
u(0,x) = u_0(x),\,\, v(0, x) = v_0(x), \quad & x\in \R^N,
\end{cases}
\end{equation}
where the initial data satisfy \eqref{initial-cond-eq}.

The main difference between the linear production model and \eqref{special-eq3} is that, for \eqref{special-eq3}, the comparison principle immediately yields a uniform \(L^\infty\)-bound for \(v\). This boundedness is useful for controlling the local \(L^{2r}\)-norm of \(\nabla v\) by the local \(L^{r'}\)-norm of \(u\), for some \(r'>r>1\) (see Lemma 3.1 in \cite{hassan2025global}), which in turn plays an important role in establishing a local \(L^p\)-bound for \(u\), for some \(p>1\). In contrast, for \eqref{main-eq}, \(\|v\|_{L^\infty}\) is not always bounded, and therefore the terms involving \(v\) must be treated more carefully when proving boundedness of the local \(L^p\)-norm of \(u\). In this setting, the diffusion term is particularly helpful, especially when \(m\) is close to \(1\).

H\"older continuity is the highest regularity generally expected for globally bounded weak solutions of \eqref{main-eq}. It is therefore natural to establish uniqueness within this class. In the absence of logistic source terms, that is, when \(a=b=0\), uniqueness results are known under additional regularity and integrability assumptions on the initial data. In particular, Kawakami and Sugiyama \cite{kawakami2016uniqueness} established uniqueness of weak solutions of \eqref{main-eq} for \(\tau=0\), in the class of H\"older continuous functions, when
\(
m>\max\{\frac12-\frac1N,0\}.
\)
Miura and Sugiyama \cite{miura2014uniqueness} proved uniqueness of weak solutions to \eqref{main-eq}, for \(\tau\in\{0,1\}\), by a vanishing-viscosity duality method under the same condition on \(m\) and in the same H\"older class. Using the duality method applied in \cite{hassan2025global}, we establish uniqueness of weak solutions for the linear production case \eqref{main-eq} when \(1<m\le 3\), \(a,b>0\), and without any integrability assumptions on the initial data. By the same argument, uniqueness also holds for any \(m>1\) in the absence of logistic source terms.
\subsection{Definitions and Main Results}

In this subsection, we introduce the definitions of weak solutions of \eqref{main-eq}
 and state the main results of the paper.

\begin{defin}
\label{D.1}
Let \(m>1\), \(\tau\in\{0,1\}\) and \(T>0\), and let
\(u_0\in L^\infty(\mathbb R^N)\). When \(\tau=1\), let
\(v_0\in W^{1,\infty}(\mathbb R^N)\). When \(\tau=0\), no initial
condition is imposed on \(v\), and all terms involving \(\tau v_0\) are
understood to be zero.

A pair \((u,v)\) defined a.e. in
\((0,T)\times\mathbb R^N\) is called a weak solution of
\eqref{main-eq} on \([0,T)\) if the following conditions hold:
\begin{itemize}
    \item[(1)] 
    \(u\in L^2(0,T;L^2_{\rm loc}(\mathbb R^N))\) and
    \(\nabla u^m\in L^1(0,T;L^1_{\rm loc}(\mathbb R^N))\);

    \item[(2)]
    \(v\in L^1(0,T;W^{1,1}_{\rm loc}(\mathbb R^N))\) and
    \(u\nabla v\in L^1(0,T;L^1_{\rm loc}(\mathbb R^N))\);

    \item[(3)]
    For every \(\phi\in C_c^1([0,T)\times\mathbb R^N)\), we have
    \begin{align*}
    &\int_0^T\int_{\mathbb R^N} u\phi_t\,dxdt
    +\int_{\mathbb R^N}u_0(x)\phi(0,x)\,dx \\
    &\quad =
    \int_0^T\int_{\mathbb R^N}
    \left(
    \nabla u^m\cdot\nabla\phi
    -\chi u\nabla v\cdot\nabla\phi
    -au\phi
    +bu^2\phi
    \right)\,dxdt,
    \end{align*}
    and
    \begin{align*}
    &\tau\int_0^T\int_{\mathbb R^N} v\phi_t\,dxdt
    +\tau\int_{\mathbb R^N}v_0(x)\phi(0,x)\,dx  =
    \int_0^T\int_{\mathbb R^N}
    \left(
    \nabla v\cdot\nabla\phi
    +\lambda v\phi
    -\mu u\phi
    \right)\,dxdt.
    \end{align*}
\end{itemize}

A global weak solution of \eqref{main-eq} is a pair \((u,v)\) of
functions defined a.e. in \((0,\infty)\times\mathbb R^N\) such that
\((u,v)\) is a weak solution of \eqref{main-eq} on \([0,T)\) for every
\(T>0\).
\end{defin}

Now we state our main results. The first theorem established existence of a global non-negative weak solution which may not stay bounded, for any $m>1$, $b>0$. 

\begin{tm}[Global Existence]\label{main-thm1}
    Let $m>1$, \(a,b, \lambda, \mu >0\) and $\chi\in \R$. Let $T>0$, $\tau\in \{0, 1\}$ and let $u_0\in L^\infty(\R^N)$ and $\tau v_0\in W^{1,\infty}(\R^N)$ such that $u_0,\tau v_0\geq 0$. Then there exists at least one non-negative global weak solution of \eqref{main-eq} satisfying the following 
   \[
u\in L^\infty\big(0,T;L^{m+1}_{\mathrm{loc}}(\R^N)\big)
\cap L^{m+2}_{\mathrm{loc}}\big((0,T)\times \R^N\big),
\quad
\nabla u^m\in L^2_{\mathrm{loc}}\big((0,T)\times \R^N\big),
\]
and
\[
v,\ \nabla v \in L^\infty\big(0,T;L^{m+1}_{\mathrm{loc}}(\R^N)\big), \quad u\nabla v \in L^p_{\mathrm{loc}}\big((0,T)\times \R^N\big)
\]
for some \(p\in(1,2)\).
\end{tm}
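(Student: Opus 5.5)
The plan is to construct the solution as a limit of solutions to regularized, non-degenerate problems, using weighted (localized) energy estimates that are uniform in the regularization, and to pass to the limit by the Simon--Dubinski\u{\i} compactness theorem. Concretely, for $\varepsilon\in(0,1)$ I would replace $\Delta u^m$ by $\Delta\big((u+\varepsilon)^m-\varepsilon^m\big)$, which is uniformly parabolic. I would mollify $u_0$ and $\tau v_0$, and I would also truncate the cross-diffusion term by writing $\chi\nabla\cdot(u\,\nabla v/(1+\varepsilon|\nabla v|))$. For fixed $\varepsilon$, standard parabolic theory on $\mathbb R^N$ with bounded data (fixed point in $C_b$, comparison giving $u_\varepsilon\ge0$, and the logistic term together with the truncation ruling out blow-up) yields a global classical solution $(u_\varepsilon,v_\varepsilon)$ with $u_\varepsilon,v_\varepsilon\ge0$ and $v_\varepsilon$ given by the resolvent or heat-semigroup representation. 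Since the data are not integrable, every estimate must be local. I would therefore use a weight $\phi(x)=(1+|x|^2)^{-\gamma}$ with $\gamma>N/2$ large, so that $\phi\in L^1$ and $|\nabla\phi|\le C\phi$; cutoffs $\psi\in C_c^\infty$ are the alternative.

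The core step is a weighted $L^{m+1}$-type energy estimate. Testing the $u$-equation with $u_\varepsilon^{m}\phi$ (or $(u_\varepsilon+\varepsilon)^m\phi$) gives a dissipation term $\int |\nabla u_\varepsilon^{(2m)/2}|^2\phi\approx\int|\nabla u^m|^2\phi$ and a logistic gain $b\int u^{m+2}\phi$. The boundary-type terms involving $\nabla\phi$ are absorbed using $|\nabla\phi|\le C\phi$. The chemotactic term $\chi\int u\nabla v\cdot\nabla(u^m\phi)$ is handled by Young's inequality, which reduces it to $\frac14\int|\nabla u^m|^2\phi+C\int u^2|\nabla v|^2\phi$. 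For $\tau=0$, I would instead integrate by parts to get $\chi\frac{m}{m+1}\int u^{m+1}(\mu u-\lambda v)\phi\le C\int u^{m+2}\phi$, which is controlled by the logistic term only if $b$ is large; so the Young route is better in general. I then need to control $\int u^2|\nabla v|^2\phi$. Here I would use elliptic/parabolic weighted estimates: testing the $v$-equation gives $\int|\nabla v|^{m+2}\phi$ (for $\tau=1$, together with $\sup_t\int|\nabla v|^{m+1}\phi$) bounded by $C\int u^{m+2}\phi+C$, via maximal regularity in weighted spaces or the explicit kernel $e^{-\lambda t}G$. By H\"older's inequality, $\int u^2|\nabla v|^2\phi\le \delta\int u^{m+2}\phi+C_\delta\int|\nabla v|^{m+2}\phi$ when $m\ge 0$. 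Closing the loop requires the constant in front of $u^{m+2}$ to be beaten by $b$.

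I expect this closing step to be the main obstacle, because for arbitrary $b>0$ the constant from $\nabla v$ need not be small. My first fix is to estimate the chemotaxis term at a lower level, namely at the $L^2$ level of $u$ times the $L^{m+2}$ or higher level of $\nabla v$ with a $v$-independent exponent mismatch. I would use the fact that $m>1$: $\int u^2|\nabla v|^2\phi\le \delta\int u^{m+2}\phi+C\int|\nabla v|^{2(m+2)/m}\phi$, and since $2(m+2)/m<m+2+$(gain from diffusion), the gradient term can be bounded through $\int |\nabla u^m|^2\phi$ and Gagliardo--Nirenberg with smaller powers, with Gronwall handling the lower-order remainder. If that fails, I would fall back on a two-step estimate, first obtaining weighted $L^2$ bounds for $u$ from $b\int u^3\phi$ (testing with $u\phi$) and then bootstrapping to $m+1$. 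Either way, the goal is $\varepsilon$-uniform bounds, on $(0,T)$, for $\sup_t\int u^{m+1}\phi$, $\iint u^{m+2}\phi$, $\iint|\nabla u^m|^2\phi$, and $\sup_t\int(v^{m+1}+|\nabla v|^{m+1})\phi$. From these, H\"older's inequality gives $u\nabla v\in L^p_{loc}$ with $p=\frac{(m+2)(m+1)}{2m+3}\in(1,2)$ if I use the time-sup bound, or by a similar interpolation.

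Finally I would pass to the limit. From the equation, $\partial_t u_\varepsilon$ is bounded in $L^1(0,T;(W^{1,q}_0(B_R))^*)$ for large $q$, and $\nabla u_\varepsilon^m$ is bounded in $L^2_{loc}$. The Simon--Dubinski\u{\i} lemma, applied to $u^m_\varepsilon$ on balls $B_R$ with a diagonal argument, gives $u_\varepsilon\to u$ a.e. and hence strongly in $L^{q}_{loc}$ for $q<m+2$ by Vitali. For $v$, linearity gives weak convergence of $\nabla v_\varepsilon$, and for $\tau=1$ the parabolic estimate on $\partial_t v_\varepsilon$ gives strong $L^2_{loc}$ convergence of $\nabla v_\varepsilon$ via Aubin--Lions. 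For $\tau=0$, strong convergence follows from elliptic regularity and the strong convergence of $u_\varepsilon$. Then $u_\varepsilon\nabla v_\varepsilon\to u\nabla v$ in $L^1_{loc}$, $u_\varepsilon^2\to u^2$ in $L^1_{loc}$, and the truncation disappears. Passing to the limit in the weak formulation yields Definition \ref{D.1} on every $[0,T)$, and the weak lower semicontinuity of the norms gives the stated regularity.
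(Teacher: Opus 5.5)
The gap is the step you flag yourself: an $\varepsilon$-uniform bound on $\sup_t\int u_\varepsilon^{m+1}\phi$. Everything downstream rests on it: the bounds on $\iint u_\varepsilon^{m+2}\phi$ and $\iint|\nabla u_\varepsilon^m|^2\phi$, the $\partial_t u_\varepsilon$ bound, and compactness. For small $b$ in the low-diffusion range, neither of your fixes closes it.

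\textbf{Your first fix.} Take $\tau=0$ and $\chi>0$. After Young you must absorb $\int|\nabla v|^{2(m+2)/m}\phi$. The sharpest elliptic bound is $\|\nabla v\|_{L^{2(m+2)/m}}\lesssim\|u\|_{L^s}$ with $\frac1s=\frac{m}{2(m+2)}+\frac1N$. Gagliardo--Nirenberg for $u^m$, between $\|\nabla u^m\|_{L^2}$ and the local $L^1$ norm of $u$, then bounds the term by $C\|\nabla u^m\|_{L^2}^{\gamma}$ plus lower order, where
\[
\gamma=\frac{2\bigl(N(m+4)-2(m+2)\bigr)}{m\,(2Nm-N+2)},\qquad \gamma<2\iff (m+1)\bigl(N(m-2)+2\bigr)>0\iff m>2-\tfrac2N .
\]
This reflects the scaling criticality of $m=2-\frac2N$, and other interpolations do not escape it:
\begin{itemize}
\item Interpolating against the logistic quantity $\int u^{m+2}\phi$ gives the power $\frac{N(m+4)-2(m+2)}{Nm(m+1)}$ of that quantity. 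This is below $1$ under the same condition.
\item Bootstrapping from the $L^{p+1}_{\rm loc}$ bounds that small $p$ allows for any $b>0$ needs $p+1>\frac{N(2-m)}{2}$. This is reachable only if $b>b_m$ (Lemma~\ref{uLp+2} and the proof of Proposition~\ref{main-perturbed-thm}).
\end{itemize}

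\textbf{Your fallback.} Testing with $u\phi$ and using $-\Delta v=\mu u-\lambda v$ leaves $\frac{\chi\mu}{2}\int u^3\phi$. This is absorbed only if $b>\frac{\chi\mu}{2}$. Even then, an $L^2$ bound lies above the bootstrap threshold only if $m>2-\frac4N$.

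For $\tau=1$ the diffusion-only closing is similarly restricted: the paper's Case~1.2 of Lemma~\ref{Lp-prior} needs $m>\frac{2N}{N+2}$. Separately, your H\"older step $\int u^2|\nabla v|^2\phi\le\delta\int u^{m+2}\phi+C_\delta\int|\nabla v|^{m+2}\phi$ is false for $m<2$, since the two sides have degrees $4$ and $m+2$.

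\textbf{The paper's route.} The paper does not supply the missing ingredient either. In its proof of Theorem~\ref{main-thm1} it closes the estimate by the route you set aside. It integrates $I_6$ by parts onto $\Delta v$ and lets half the logistic term absorb the resulting $u^{p_0+2}$ term, with $p_0\ge m$:
\begin{itemize}
\item for $\tau=0$, the term is $\frac{\mu|\chi|p_0}{p_0+1}\int u^{p_0+2}\psi^2$;
\item for $\tau=1$, it is the maximal-regularity analogue involving $C_{p_0+2,N}$.
\end{itemize}
The added $-\varepsilon u^\sigma$ only makes the approximate problems global and is then discarded. This forces $b$ above a fixed multiple of $\mu|\chi|$. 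Indeed, the stated choice $p_0\in[m,\min\{2,b/(b-2\mu|\chi|)_+\})$ does not give $\frac b2>\frac{\mu|\chi|p_0}{p_0+1}$ when $b\le 2\mu|\chi|$.

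So for small $b$, with $1<m\le 2-\frac2N$ ($\tau=0$) or $1<m\le\frac{2N}{N+2}$ ($\tau=1$), neither argument yields $u\in L^\infty(0,T;L^{m+1}_{\rm loc})$. You would need a lower bound on $b$, a genuinely new estimate, or a weaker regularity class than the one claimed.

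\textbf{A secondary gap for $m>2$.} The weight terms produce $\int u^{2m}|\nabla\phi|^2/\phi$, which $b\int u^{m+2}\phi$ cannot absorb. With a fixed weight, Gagliardo--Nirenberg only gives $\delta\int|\nabla u^m|^2\phi+C\bigl(\int u^{m+1}\phi\bigr)^{2m/(m+1)}$, which is superlinear, so Gronwall fails on arbitrary $[0,T]$. The paper handles this with Lemma~\ref{I1-lm}, the small parameter $\kappa$ in $\psi_\kappa$, and a continuity argument on $\sup_{x_0}$.

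Where the estimates do close, your overall architecture is the paper's: nondegenerate regularization, localized weighted bounds, and Simon--Dubinski\u{\i} compactness.
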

The next theorem provide a condition on $m$ and the logistic source $b$, under which we can get a weak solution of \eqref{main-eq} that is globally bounded. We denote $\Omega_t:=[0,t]\times\R^N$ for any $t>0$.

\begin{tm}[Global Existence/Boundedness]
\label{main-thm}
Assume $m>1$, \(a,b, \lambda, \mu >0\) and $\chi\in \R$. Let $T>0$, $\tau\in \{0, 1\}$ and let $u_0\in L^\infty(\R^N)$ and $\tau v_0\in W^{1,\infty}(\R^N)$ such that $u_0,\tau v_0\geq 0$. Assume that
\begin{equation*}
    b> b_m := \begin{cases}
        \left(\inf_{{p} >\max\{1, \frac{N(2-m)}{2m}\}}\frac{p-1}{p}\left(C_{p+1,N}\right)^{\frac{1}{p+1}}\right)|\chi|\mu, \quad & 1<m\le \frac{2N}{N+2},\\[1mm]
        0, & m> \frac{2N}{N+2},
    \end{cases}
\end{equation*}
when \(\tau=1\), and
\begin{equation*}
    b> b_m := \begin{cases}
        \frac{\mu|\chi|((2-m)N -2)_+}{(2-m)N}, \quad & 1<m\le 2-\frac2N,\\[1mm]
        0, & m> 2-\frac2N,
    \end{cases}
\end{equation*}
when \(\tau = 0\). Then \eqref{main-eq} admits a globally bounded nonnegative weak solution $(u,v)$ on
$\Omega_T$. Moreover, for every $p\ge m$,
\[
u(t,\cdot)\in L^p_{\rm loc}(\R^N),
\qquad 
\nabla u^{\frac{p+m}{2}}\in L^2_{\rm loc}(\Omega_T),
\qquad t\in[0,T],
\]
with a bound depending only on
\(
m,|\chi|,a,b,N,p,\lambda,\mu,\|u_0\|_\infty,
\|\tau v_0\|_{W^{1,\infty}},
\)
and the diameter of the local spatial domain, but not on $T$. In addition, there exists a constant
$C>0$, independent of $T$, such that
\[
\|u\|_{L^\infty(\Omega_T)}
+
\|v\|_{W^{1,\infty}(\Omega_T)}
\le C.
\]
Furthermore,
\(
u\in C\big((0,T);L^2_{\rm loc}(\R^N)\big).
\)
\end{tm}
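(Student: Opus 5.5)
We will prove the statement by working with a regularized problem and deriving a priori bounds that are uniform in the regularization, localized in space, and independent of $T$. Concretely, we replace $\Delta u^m$ by $\Delta (u+\ep)^m - \Delta \ep^m$ (or add $\ep\Delta u$) and take smoothed, truncated initial data. This gives classical solutions $(u_\ep,v_\ep)$, global by standard local theory together with the bounds below. The passage to the limit will reuse the compactness argument (Simon--Dubinski\u{\i}) behind Theorem \ref{main-thm1}. All the new work therefore lies in uniform local $L^p$ and $L^\infty$ estimates.

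\textbf{Step 1: uniformly local $L^p$ bounds.} We fix a weight $\varphi(x)=(1+|x-x_0|^2)^{-k}$ (or a cutoff $\psi^{q}$ with $q$ large), with $k$ large, and test the $u$-equation with $u^{p-1}\varphi$. Integrating the taxis term by parts gives
\[
\tfrac{\chi(p-1)}{p}\int u^p\Delta v\,\varphi+\text{(terms with }\nabla\varphi).
\]
When $\tau=0$ we substitute $\Delta v=\lambda v-\mu u$. Then $-\chi\Delta v\le \mu|\chi| u+\lambda|\chi| v$ produces a term $\frac{p-1}{p}\mu|\chi|\int u^{p+1}\varphi$, which must be absorbed by $b\int u^{p+1}\varphi$ together with part of the diffusion. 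The diffusion term is
\[
\frac{4mp(p-1)}{(p+m-1)^2}\int|\nabla u^{(p+m-1)/2}|^2\varphi .
\]
Through a local Gagliardo--Nirenberg inequality, and using the available lower-order local $L^1$-type control from the logistic term, it dominates $\int u^{p+1}\varphi$ precisely when $m>2-\frac2N$. Otherwise we optimize $p$ near $\frac{N(2-m)}{2}$ (the critical exponent), and the requirement $b>\frac{p-1}{p}\mu|\chi|$ at that $p$ gives exactly $b>\mu|\chi|\frac{(2-m)N-2}{(2-m)N}$.

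When $\tau=1$ we instead need a maximal-regularity-type estimate. It bounds $\int_0^t e^{-(t-s)}\int|\Delta v|^{p+1}\varphi$ by $C_{p+1,N}\mu^{p+1}\int_0^t e^{-(t-s)}\int u^{p+1}\varphi$, plus data terms controlled by $\|v_0\|_{W^{1,\infty}}$ (the weighted, whole-space version of Cao's lemma). Young's inequality with the optimal split then produces the threshold $\frac{p-1}{p}C_{p+1,N}^{1/(p+1)}|\chi|\mu$. When $m>\frac{2N}{N+2}$, the diffusion term absorbs this via Gagliardo--Nirenberg, so no smallness is needed. The $v$-terms not tied to $u$ (such as $\lambda v$, and the unbounded $v$ at large scales) are handled by the linear bound $\sup_{x_0}\int v\varphi\lesssim \sup_{x_0}\int u\varphi+C$. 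The latter is obtained from the heat or Bessel kernel representation and the translation invariance of the weighted norms. The resulting ODE inequality $y'+y\le C$ then gives a $T$-independent bound on $\sup_{x_0}\int u^p\varphi$, and integrating in time gives $\nabla u^{(p+m)/2}\in L^2_{\rm loc}$.

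\textbf{Step 2: $L^\infty$ bounds and passage to the limit.} A local $L^p$ bound on $u$ with $p>N$ (for $\tau=1$, bootstrapping first for $p$ large) gives $\|v\|_{W^{1,\infty}}\le C$ through semigroup or elliptic estimates for $-\Delta+\lambda$ in uniformly local spaces. With $\nabla v$ bounded, a Moser--Alikakos iteration on the localized $u$-estimate, with constants tracked in $p$, yields $\|u\|_{L^\infty}\le C$ independent of $\ep$ and $T$. These bounds survive the limit $\ep\to0$ by lower semicontinuity. Continuity $u\in C((0,T);L^2_{\rm loc})$ follows from $\nabla u^m\in L^2_{\rm loc}$, $u_t\in L^2(0,T;H^{-1}_{\rm loc})$ and boundedness, via a Lions--Magenes or Aubin-type argument applied to $u^m$ and then to $u$, using $|a-b|^{m}\le |a^m-b^m|$ for $m>1$.

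\textbf{Main obstacle.} We expect the main obstacle to be Step 1 in the subcritical regime, and specifically two points. The first is obtaining the sharp constant for $b_m$ despite the localization: the weight produces error terms with $\nabla\varphi$, and these must be shown to be lower order, using $|\nabla\varphi|\le C\varphi$ and $|\Delta\varphi|\le C\varphi$ for the polynomial weight. The second is controlling $v$ (which need not be bounded) by $u$ in the uniformly local norm. To handle both, we first run a continuity-type argument as in \cite{hassan2025global}: we assume $\sup_{x_0}\int u^p\varphi\le 2M$ on $[0,t^*]$, and show that the estimate then improves to $M$, which closes the bootstrap.
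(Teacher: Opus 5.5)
Most of your plan matches the paper. For $\tau=0$ you use the substitution $\Delta v=\lambda v-\mu u$, choose $p$ just above $\frac{N(2-m)}{2}$, and run a Gagliardo--Nirenberg bootstrap with local $L^1$ control (which alone suffices when $m>2-\frac2N$). For $\tau=1$ and small $m$ you use the weighted maximal-regularity estimate with the optimized Young split. You invoke the continuity argument for $m>2$. Finally you go from local $L^p$ with $p>N$ to $W^{1,\infty}$ bounds on $v$, then Moser iteration for $u$. Using Simon--Dubinski\u{\i} instead of uniform H\"older estimates to pass to the limit is also fine.

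The genuine gap is the sentence ``When $m>\frac{2N}{N+2}$, the diffusion term absorbs this via Gagliardo--Nirenberg.'' After maximal regularity, the term to absorb is $\int_0^t\!\int u^{p+1}\varphi$. This is the same power as in your $\tau=0$ paragraph, against the same diffusion, so by your own computation GN with only local $L^1$ control absorbs it exactly when $m>2-\frac2N$. Now take $N\ge3$, $\frac{2N}{N+2}<m<2-\frac2N$ and small $b$. The condition $\frac{p-1}{p}C_{p+1,N}^{1/(p+1)}|\chi|\mu<b$ forces $p$ near $1$, so maximal regularity gives only a local $L^{p_1}$ bound with $p_1$ close to $1$. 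That is below the $\frac{N(2-m)}{2}$ needed to start the bootstrap, so you do not get $b_m=0$ there.

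The missing idea is Case 1.2 in the paper's proof of Lemma \ref{Lp-prior}, which has two ingredients:
\begin{itemize}
\item The parabolic $v$-equation, combined with the logistic dissipation $-b\int u^2\psi$ in the local $L^1$ estimate, gives $\sup_t\int|\nabla v|^2\psi\le C$ (Lemma \ref{l4.7}). This is a better lower-order bound than anything $u\in L^\infty_tL^1_{\rm loc}$ provides.
\item Instead of maximal regularity, one splits the taxis term against the diffusion and adds the functional $\frac1q\int|\nabla v|^{2q}\psi^2$. In the paper's normalization (testing with $u^p\psi^2$), Young's inequality sends all pure powers of $u$ into $-b\int u^{p+2}\psi^2$ with a small coefficient, so any $b>0$ works. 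The leftover terms $\int|\nabla v|^{2(p+2)/m}\psi^2$ and $\int|\nabla v|^{2(q-1)(p+2)/p}\psi^2$ are absorbed by the dissipation $\int|\nabla v|^{2q-4}|\nabla|\nabla v|^2|^2\psi^2$. This uses GN applied to $|\nabla v|^q$, with $\|\nabla v\|_{L^2}$ as the lower-order norm.
\end{itemize}
For large $p$ the two resulting constraints on $q$ are compatible exactly when $m>\frac{2N}{N+2}$; that is where this exponent comes from.

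Three smaller points need repair.

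First, in the $\tau=0$ step your bound $-\chi\Delta v\le\mu|\chi|u+\lambda|\chi|v$ leaves the term $\frac{p-1}{p}\lambda|\chi|\int u^{p}v\varphi$. No bound on $\sup_{x_0}\int v\varphi$ controls this. Estimating it by H\"older plus the elliptic estimate would add another multiple of $\mu|\chi|\int u^{p+1}\varphi$ and spoil the sharp threshold. Use the sign structure instead: only one of the two terms is bad, depending on the sign of $\chi$. For $\chi>0$ the $v$-term is nonpositive and is simply dropped, as in the paper.

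Second, a fixed polynomial weight gives only $|\nabla\varphi|\le k\varphi$. Absorbing the weight errors in the localized maximal-regularity estimate with $C_{p+1,N}$ intact needs $|\nabla\psi|\le\kappa\psi$ and $|D^2\psi|\le\kappa^2\psi$ with $\kappa\to0$ (Lemma \ref{psi-lm}), so you must rescale the weight.

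Third, Lions--Magenes does not apply as you state it. You have $u^m\in L^2(H^1_{\rm loc})$, but $\partial_t u^m$ is not known to lie in $L^2(H^{-1}_{\rm loc})$. You have $u_t\in L^2(H^{-1}_{\rm loc})$, but $u$ need not lie in $L^2(H^1_{\rm loc})$. The paper gets $u\in C((0,T);L^2_{\rm loc})$ from the $\varepsilon$-uniform interior H\"older bounds of Lemma \ref{holdercty-lem} and locally uniform convergence. Alternatively, an Alt--Luckhaus chain rule gives continuity of $t\mapsto\int u^{m+1}\zeta$, which together with weak continuity yields strong continuity.
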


The last theorem is on H\"older continuity and uniqueness of bounded non-negative weak solutions of \eqref{main-eq}.

\begin{tm}[H\"older Regularity and Uniqueness]\label{uniqueness}
Let \(m>1\), \(a,b,\lambda,\mu>0\), \(\chi\in\mathbb R\), and
\(\tau\in\{0,1\}\). Let \((u,v)\) be a globally bounded nonnegative weak
solution of \eqref{main-eq} with initial data satisfying \eqref{initial-cond-eq}.
Assume that, for every \(T>0\),
\(
u^m\in C_{\mathrm{loc}}\big((0,T);L^2_{\mathrm{loc}}(\mathbb R^N)\big)
\cap
L^2_{\mathrm{loc}}\big((0,T);W^{1,2}_{\mathrm{loc}}(\mathbb R^N)\big),
\)
and
\(
u\nabla v\in L^2_{\mathrm{loc}}\big((0,T)\times\mathbb R^N\big).
\)
Then the following hold:
\begin{enumerate}
    \item \textup{(H\"older continuity)}
    For each \(\eta>0\), there exists \(\alpha\in(0,1)\) such that
    \(u\) is uniformly H\"older continuous in
    \([\eta,\infty)\times\mathbb R^N\). Moreover, if $u_0$ is uniformly H\"{o}lder continuous, then $u$ is uniformly H\"{o}lder continuous in $[0,\infty)\times\bbR^N$ and $v\in C^{2+\alpha}([0,\infty)\times\R^N)$ when $\tau =0$. If in addition $v_0\in C^{2+\alpha}$ when $\tau =1$, then $v\in C^{1+\alpha/2, 2+\alpha}([0,\infty)\times\R^N )$.

    \item \textup{(Uniqueness)}
    Suppose that \((u',v')\) is another globally bounded nonnegative
    weak solution of \eqref{main-eq} with the same initial data, satisfying
    the same assumptions as \((u,v)\). If \(u_0\in C^\alpha(\mathbb R^N)\)
    and \(\tau v_0\in C^{2+\alpha}(\mathbb R^N)\), then for $1<m\le 3$,
    \[
    u'=u
    \qquad\text{and}\qquad
    v'=v
    \quad\text{a.e. in }(0,\infty)\times\mathbb R^N.
    \]
\end{enumerate}
\end{tm}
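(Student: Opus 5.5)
For part (1), we use DiBenedetto's intrinsic-scaling theory for degenerate parabolic equations. Since \((u,v)\) is globally bounded, \(\|u\|_{L^\infty}+\|v\|_{W^{1,\infty}}\le C\). We write the first equation as
\[
u_t=\nabla\cdot\big(\nabla u^m - \chi u\nabla v\big)+u(a-bu),
\]
that is, a porous-medium equation \(u_t=\nabla\cdot \mathbf A(x,t,u,\nabla u)+B\). Here \(\mathbf A=\nabla u^m-\chi u\nabla v\) satisfies the structure conditions \(\mathbf A\cdot\nabla u\ge m u^{m-1}|\nabla u|^2/2 - C\) and \(|\mathbf A|\le m u^{m-1}|\nabla u|+C\), and \(|B|\le C\). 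The constants depend only on the global bounds, so they are uniform over all space-time cylinders. The assumed regularity (\(u^m\in C((0,T);L^2_{\rm loc})\cap L^2_{\rm loc}W^{1,2}_{\rm loc}\), \(u\nabla v\in L^2_{\rm loc}\)) is exactly what makes \(u\) a local weak solution in the sense required by the degenerate De Giorgi machinery. The theory then gives a uniform modulus of H\"older continuity on \([\eta,\infty)\times\R^N\), with exponent and constant depending only on \(\eta\) and the data; translation invariance in \(x\) and \(t\) makes the estimate global. If \(u_0\in C^\alpha\), the boundary (initial-time) version of the same theory extends the estimate down to \(t=0\). For \(v\): when \(\tau=0\), \(v=\mu(\lambda-\Delta)^{-1}u\) with \(u\in C^\alpha\), so Schauder estimates for \(\lambda-\Delta\) on \(\R^N\) give \(v\in C^{2+\alpha}\) in \(x\), uniformly in \(t\). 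When \(\tau=1\), parabolic Schauder theory for \(v_t-\Delta v+\lambda v=\mu u\), with \(u\in C^{\alpha/2,\alpha}\) and \(v_0\in C^{2+\alpha}\), gives \(v\in C^{1+\alpha/2,2+\alpha}\). Here the exponent \(\alpha\) may be decreased along the way.

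For part (2), we use a duality argument as in \cite{hassan2025global}, localized with weights because the data are non-integrable. Set \(w=u-u'\) and \(z=v-v'\). Subtracting the weak formulations gives
\[
w_t=\Delta(a_{\rm d} w)-\chi\nabla\cdot(w\nabla v+u'\nabla z)+(a-b(u+u'))w,
\qquad a_{\rm d}=\frac{u^m-u'^m}{u-u'}\ge0,
\]
and \(\tau z_t=\Delta z-\lambda z+\mu w\). We fix a nonnegative \(\psi\in C_c^\infty\) and \(t_0>0\), and solve backwards a regularized dual problem
\[
\phi_t+(a_\varepsilon+\varepsilon)\Delta\phi+\chi\nabla v\cdot\nabla\phi+c\,\phi=\psi ,\qquad \phi(t_0)=0,
\]
where \(a_\varepsilon\) is a smooth approximation of \(a_{\rm d}\). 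The dual solution is then multiplied by an exponentially decaying weight \(e^{-\gamma\sqrt{1+|x|^2}}\), or cut off, to handle the spatial tails.

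Testing the \(w\)-equation against \(\phi\) yields
\[
\int\!\!\int w\psi=\text{error terms}.
\]
These are: \((a_{\rm d}-a_\varepsilon-\varepsilon)w\Delta\phi\), the coupling term \(\chi u'\nabla z\cdot\nabla\phi\), and the weight/cutoff commutators. The coupling term is handled by estimating \(z\) in terms of \(w\) via the second equation, for example \(\|\nabla z\|_{L^2_{\rm weight}}\lesssim\|w\|_{L^2_{\rm weight}}\), or by a Gronwall argument in time. This requires energy bounds on the dual solution \(\phi\) that are uniform in \(\varepsilon\), namely \(\sup|\phi|\) and \(\int\!\!\int(a_\varepsilon+\varepsilon)|\Delta\phi|^2\), which come from testing the dual equation with \(\Delta\phi\).

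The main obstacle is the error term \(\int\!\!\int (a_{\rm d}-a_\varepsilon)w\Delta\phi\), which is bounded by
\[
\|(a_{\rm d}-a_\varepsilon)/\sqrt{a_\varepsilon+\varepsilon}\|_{L^2}\,\|w\|_\infty\,\|\sqrt{a_\varepsilon+\varepsilon}\,\Delta\phi\|_{L^2}.
\]
To control it we use the H\"older continuity from part (1) (valid up to \(t=0\) since \(u_0\in C^\alpha\)) so that \(a_\varepsilon\) can be a mollification of \(a_{\rm d}\). Since \(a_{\rm d}\sim (u+u')^{m-1}\) is only H\"older near the degeneracy \(u=0\), we need \(\|\nabla a_\varepsilon\|\) controlled against \(a_\varepsilon^{1/2}\), roughly \(|\nabla a_{\rm d}|^2\lesssim a_{\rm d}\) in a generalized sense. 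This is where the restriction \(1<m\le3\) should enter: \((u^{m-1})\) has the right relation between gradient and square root precisely when \((m-1)/2\le1\). The logistic term gives an additional bounded zero-order coefficient, which we absorb by Gronwall. Letting \(\varepsilon\to0\) and then removing the weight yields \(\int\!\!\int w\psi=0\) for all \(\psi\), hence \(u=u'\), and then \(v=v'\) by uniqueness for the linear \(v\)-equation.
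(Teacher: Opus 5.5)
\textbf{Part (1).} This matches the paper, which quotes De Giorgi--Nash--Moser type results (\cite{black2026refining}, Theorems 1.3 and 1.9) and then uses elliptic/parabolic Schauder theory for $v$. One correction: your structure inequality $\mathbf A\cdot\nabla u\ge \frac m2 u^{m-1}|\nabla u|^2-C$ is false for $m>3$. Young's inequality leaves the term $\frac{\chi^2}{2m}u^{3-m}|\nabla v|^2$, which is unbounded near $u=0$. Work instead with $z=u^m$, as in Lemma~\ref{holdercty-lem}; then $(\nabla z-\chi u\nabla v)\cdot\nabla z\ge\frac12|\nabla z|^2-C$ holds for every $m>1$.

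\textbf{Part (2): the coupling term does not close as you propose.} Your overall framework is the paper's: a non-divergence adjoint with regularized coefficient, an exponential weight, and an error of order $\delta^{-1/2}\cdot\delta$ coming from uniform continuity of $a^*$. The problem is the chemotactic coupling. Since $a^\delta$ may be as small as $\delta$, the only dual estimate uniform in $\delta$ (Lemma~\ref{varphilem}) needs $\xi\in L^2(0,T;H^1)$, because $\int\xi\Delta\varphi$ must be rewritten as $-\int\nabla\xi\cdot\nabla\varphi$. So duality controls $w$ only in a negative norm: $\iint w\xi\psi\lesssim\eta^{1/2}\|\psi\nabla z\|_{L^2}\|\xi\|_{L^2H^1}$. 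Inserting your bound $\|\nabla z\|\lesssim\|w\|_{L^2}$ gives $\|w\|_{(L^2H^1)^*}\lesssim\eta^{1/2}\|w\|_{L^2}$, which proves nothing. You also cannot take $\xi=w$, since $\nabla u\notin L^2$.

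The missing idea is to test with $\xi=\mu\bar v\psi$, approximated in $H^1$ by smooth compactly supported functions. Combine the resulting bound on $\mu\iint\bar u\bar v\psi^2$ with the weighted energy inequality from testing the $\bar v$-equation with $\bar v\psi^2$; its right-hand side contains exactly $\mu\iint\bar u\bar v\psi^2$. The smallness of $\eta$ comes from working on a short interval $[0,T^*]$ and then iterating. It lets you absorb and conclude $\bar v=0$ first, and only then $\bar u=0$ --- the reverse of your order. Equivalently, the correct input is $\|\nabla z\|\lesssim\|w\|_{L^2H^{-1}}$, not $\|w\|_{L^2}$.

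\textbf{Part (2): the restriction $m\le 3$ is misplaced.} Testing the non-divergence dual equation with $\Delta\varphi$ and $\varphi$ produces $\int a^\delta|\Delta\varphi|^2$ directly. Use Young's inequality with $a^\delta\le K$ for $\int a^\delta\varphi\Delta\varphi$, not integration by parts. Then $\nabla a^\delta$ never appears, so no condition like $|\nabla a_{\rm d}|^2\lesssim a_{\rm d}$ is needed; it would not follow from H\"older continuity anyway.

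The real obstruction is the logistic coefficient $-b\tilde u$, which you dismiss as a harmless zero-order term. It produces $b\iint\tilde u\varphi\Delta\varphi$. This cannot be integrated by parts, because $\nabla\tilde u$ is not controlled. It can only be absorbed into the degenerate term via $b^2\tilde u^2/a^\delta\le C\tilde u^{3-m}$, using $a^*\ge c_m\tilde u^{m-1}$. That bound is uniform in $\delta$ exactly when $m\le3$. This is why Lemma~\ref{varphilem} needs $m\le3$ when $b>0$ but allows every $m>1$ when $b=0$.

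Similarly, the drift $\chi\nabla v_2\cdot\nabla\varphi\,\Delta\varphi$ must be integrated by parts. That is where $\|v_2\|_{C^2}<\infty$ from part (1), and hence $\tau v_0\in C^{2+\alpha}$, is used.

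\textbf{Handling the weight.} Multiplying $\phi$ by the weight afterwards creates commutators that pair $w$ with $\nabla\phi$. You would have to show these vanish, which requires uniform-in-$\varepsilon$ decay of $\nabla\phi$, i.e.\ weighted dual estimates anyway. The paper avoids this by building $\psi$ into the adjoint coefficients $g^\delta,f^\delta$ in \eqref{gfeqn}, so that $\varphi\psi$ solves the weighted adjoint equation exactly.
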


We conclude this section with the following remarks on our results.

\begin{rk}
\begin{itemize}
    \item[(1)] \noindent\textbf{Novelty.}
To the best of our knowledge, this is the first work to establish the existence of a weak solution for every \(m>1\), as well as the existence of a globally bounded weak solution under a suitable condition on \(b\), for initial data that are not necessarily integrable. Theorems~1.1 -- Theorem~1.3 also hold in the parabolic--parabolic case for any \(\tau>0\). We take \(\tau=1\) only for simplicity.
    
    \item[(2)] \textbf{Comparison with the semilinear case.} When \(m=1\), the condition \(b>b_m\) in Theorem 1.1 coincides with the corresponding condition in the semilinear case when \(m=1\); see \cite{hassan2024chemotaxis,issa2020pointwise} for the case \(\tau=1\), and \cite{hassan2024chemotaxis, salako2017global,tello2007chemotaxis} for the case \(\tau=0\). Moreover, when \(N=1,2\), we have \(b_m=0\), which means that a globally bounded nonnegative weak solution exists in one and two dimensions for any $b>0$. For \(N\ge 3\), the condition \(b>b_m\) is sharper than that in the semilinear case, indicating that the stronger diffusion is beneficial in this setting.

    \item [(3)]\noindent\textbf{Remark on uniqueness.}
We prove uniqueness for any nonnegative globally bounded weak solution of
\eqref{main-eq} that is H\"older continuous up to the initial time, provided
\(1<m\le 3\). Note that Theorem~1.1 establishes only the existence of a weak
solution, which may be unbounded and may fail to be H\"older continuous, when
\(1<m\le m_\tau\), where
\[
m_\tau=\frac{2N}{N+2}\quad\text{if }\tau=1,
\qquad
m_\tau=2-\frac{2}{N}\quad\text{if }\tau=0.
\]
   
\end{itemize}
\end{rk}

The rest of the paper is organized as follows. Section \ref{ss.2.1} collects several preliminary lemmas. Section 3 is devoted to uniform local $L^p$ estimates. In Section 4, we prove the existence of global weak solutions and establish Theorems \ref{main-thm1} and \ref{main-thm}. Finally, in Section 5, we prove uniqueness of weak solutions and establish Theorem \ref{uniqueness}.

\section{Preliminary}\lb{ss.2.1}
In this section, we present some preliminary lemmas that will be used in the rest of the paper. These include $L^p-L^q$ estimates for the analytic semigroup generated by $\Delta-\lambda I$ on $L^p$,  a class of useful decay functions,  a  lemma on maximal regularity for parabolic equations on $\R^N$, and some result on a perturbed problem of \eqref{main-eq} that would be useful in the proof of Theorem \ref{main-thm1} and Theorem \ref{main-thm}.

\subsection{Analytic Semigroup $e^{(\Delta-\lambda I)t}$ and Maximal Regularity for Parabolic Equations}

We start with some basic properties of the analytic semigroup $T_p(t)$  generated by $\Delta-\lambda I$ on $L^p(\mathbb{R}^N)$ $(p\ge 1$). This is defined by
\begin{equation}
\label{semigroup-eq}
(T_p(t)u)(x)=e^{-\lambda t}(G(t, \cdot)\ast u)(x)= \int_{\R^{N}}e^{-\lambda t}G(t, x-y)u(y)dy
\end{equation}
for every $u\in L^p(\mathbb{R}^N)$, $t> 0$,  and $x\in\R^N$, where  $G(t,x)$ is the heat kernel defined by
\begin{equation}
\label{heat-kernel}
G(t,x)={(4\pi t)^{-\frac{N}{2}}}e^{-\frac{|x|^{2}}{4t}}.
\end{equation}

\begin{lem}
Let \(1\leq p\leq \infty\), and let \(T_p(t)\) denote the semigroup generated by
\(\Delta-\lambda I\) on \(L^p(\mathbb R^N)\). Then the following properties hold.

\begin{enumerate}
    \item For \(1\leq p<q\leq \infty\), there exists a constant \(C_{p,q}>0\) such that, for all \(t>0\) and all \(u\in L^p(\mathbb R^N)\),
    \begin{equation}\label{Lp Estimates-2}
    \|T_p(t)u\|_{L^{q}(\mathbb{R}^{N})}
    \leq C_{p,q} e^{-\lambda t}
    t^{-\left(\frac{1}{p}-\frac{1}{q}\right)\frac{N}{2}}
    \|u\|_{L^{p}(\mathbb{R}^{N})}.
    \end{equation}
    Moreover,
    \begin{equation}\label{Lp Estimates-3}
    \|\nabla T_p(t)u\|_{L^{q}(\mathbb{R}^{N})}
    \leq C_{p,q} e^{-\lambda t}
    t^{-\frac{1}{2}-\left(\frac{1}{p}-\frac{1}{q}\right)\frac{N}{2}}
    \|u\|_{L^{p}(\mathbb{R}^{N})}.
    \end{equation}

    \item If \(u\in L^p(\mathbb R^N)\cap L^\infty(\mathbb R^N)\), then
    \(T_p(t)u\in L^\infty(\mathbb R^N)\) and, for all \(t\geq 0\),
    \begin{equation}\label{L-infty- Estimates-1}
    \|T_p(t)u\|_{L^{\infty}(\mathbb{R}^{N})}
    \leq e^{-\lambda t}\|u\|_{L^{\infty}(\mathbb{R}^{N})}.
    \end{equation}
\end{enumerate}
\end{lem}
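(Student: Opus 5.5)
The argument is a direct computation with the explicit heat kernel. For \(t>0\), \(T_p(t)u=e^{-\lambda t}G(t,\cdot)\ast u\). Every bound will therefore come from Young's convolution inequality together with the exact scaling of \(G\) and \(\nabla G\). The factor \(e^{-\lambda t}\) passes through every norm unchanged, so it suffices to estimate \(G(t,\cdot)\ast u\) and \(\nabla G(t,\cdot)\ast u\).

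For \eqref{Lp Estimates-2}, fix \(1\le p<q\le\infty\) and define \(r\in[1,\infty]\) by \(1+\frac1q=\frac1r+\frac1p\); this is admissible because \(q>p\). Young's inequality gives
\[
\|G(t,\cdot)\ast u\|_{L^q}\le \|G(t,\cdot)\|_{L^r}\|u\|_{L^p}.
\]
The substitution \(x=\sqrt t\,y\) yields \(\|G(t,\cdot)\|_{L^r}=t^{-\frac N2(1-\frac1r)}\|G(1,\cdot)\|_{L^r}\), and \(1-\frac1r=\frac1p-\frac1q\). This is exactly the exponent in \eqref{Lp Estimates-2}, with \(C_{p,q}=\|G(1,\cdot)\|_{L^r}<\infty\) since \(G(1,\cdot)\) is Gaussian.

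For \eqref{Lp Estimates-3}, differentiate under the integral, which is justified by the Gaussian decay of \(\nabla G\) and \(u\in L^p\). This gives \(\nabla(G(t,\cdot)\ast u)=(\nabla G(t,\cdot))\ast u\), and the same Young inequality applies. Here \(\nabla G(t,x)=-\frac{x}{2t}G(t,x)\), so
\[
\nabla G(t,x)=t^{-\frac N2-\frac12}(\nabla G)(1,x/\sqrt t).
\]
Consequently \(\|\nabla G(t,\cdot)\|_{L^r}=t^{-\frac12-\frac N2(1-\frac1r)}\|\nabla G(1,\cdot)\|_{L^r}\), and \(\nabla G(1,\cdot)\in L^r\) for every \(r\in[1,\infty]\). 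We take \(C_{p,q}\) to be the maximum of the two Gaussian norms.

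For \eqref{L-infty- Estimates-1}, use that \(G(t,\cdot)\ge0\) and \(\int G(t,x)\,dx=1\). Then \(|G(t,\cdot)\ast u|\le\|u\|_{L^\infty}\) pointwise for \(t>0\), and the case \(t=0\) is trivial since \(T_p(0)=I\). No step is a genuine obstacle. The only points needing care are the endpoint cases \(q=\infty\) (where \(r=p'\)) and \(p=1\), and both are covered by Young's inequality with the corresponding \(r\).
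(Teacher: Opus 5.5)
Your proof is correct and is essentially the argument the paper has in mind: the paper only remarks that the bounds follow from the standard heat-semigroup $L^p$--$L^q$ estimates, from differentiating the heat kernel, and from positivity and unit mass of $G$. Your Young's-inequality-plus-Gaussian-scaling computation is exactly the standard proof of those facts, and with $r=1$ it also covers the case $q=p$, which the paper in fact uses later with $p'=p$.
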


The first estimate follows from the standard \(L^p\)-\(L^q\) estimates for the heat semigroup. The gradient estimate follows similarly by differentiating the heat kernel. The second estimate follows from the positivity and unit mass of the heat kernel.

Next is a maximal regularity lemma that would be useful in controlling the terms involving $v$
for the parabolic-parabolic case $(\tau = 1)$.

\begin{lem}
    \label{maximal-regularity-lm}
        Let    $v_0\in W^{1,\gamma}(\R^N)\cap L^\infty(\R^N)$.  There exists $C_{\gamma,N}$  such that for any 
  $T\in (0,\infty)$, if  $g \in L^\gamma((0,T), L^\gamma(\R^N))$ and $v(\cdot,\cdot)\in W^{1,\gamma}((0,T),L^{\gamma}(\R^N))\cap L^{\gamma}((0,T), W^{2,\gamma}(\R^N))$ solves  the following initial boundary value problem,
    \begin{equation}
    \label{pdelaplace}
    \begin{cases}
     v_t =\Delta v -  \lambda v + g,\quad &x\in \R^N,\,\,  0<t<T,\cr
    v(0,x) = v_0(x),\quad &x\in \R^N,
    \end{cases}
    \end{equation}
    then 
    \beq\lb{maximal-regularity-eq1-1}
    \begin{aligned}
   &\quad\, \int_{0}^T \int_{\R^N}e^{ \lambda\gamma t}\left( |v (t,x)|^\gamma +|\nabla v(t,x)|^\gamma +|\Delta v(t,x)|^\gamma\right)dxdt\\
& \le C_{\gamma,N}\left[ \int_{0}^T \int_{\R^N}e^{\lambda\gamma  t}|g(t,x)|^{\gamma}dx dt+ T\left( \|v_0(\cdot)\|^{\gamma}_{L^\gamma(\R^N)}+\|\nabla v_0(\cdot)\|_{L^\gamma(\R^N)}^\gamma\right)\right] .
    \end{aligned}
    \eeq
    \end{lem}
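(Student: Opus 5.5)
The plan is to derive the weighted estimate \eqref{maximal-regularity-eq1-1} from the classical unweighted maximal $L^\gamma$-regularity of the heat operator on $\R^N$. The weight is removed by the substitution $w(t,x):=e^{\lambda t}v(t,x)$. A direct computation gives
\[
w_t=e^{\lambda t}(v_t+\lambda v)=\Delta w+e^{\lambda t}g=:\Delta w+\tilde g,\qquad w(0,\cdot)=v_0 .
\]
Since $\int_0^T\int e^{\lambda\gamma t}|\cdot|^\gamma=\int_0^T\int|e^{\lambda t}\,\cdot|^\gamma$, the left side of \eqref{maximal-regularity-eq1-1} is exactly $\int_0^T\int(|w|^\gamma+|\nabla w|^\gamma+|\Delta w|^\gamma)$, and the forcing term on the right is $\|\tilde g\|^\gamma_{L^\gamma(\Omega_T)}$. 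It therefore suffices to prove, with $C_{\gamma,N}$ independent of $T$,
\[
\int_0^T\!\!\int_{\R^N}\big(|w|^\gamma+|\nabla w|^\gamma+|\Delta w|^\gamma\big)\le C\Big[\|\tilde g\|^\gamma_{L^\gamma(\Omega_T)}+T\big(\|v_0\|_\gamma^\gamma+\|\nabla v_0\|_\gamma^\gamma\big)\Big].
\]

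Next I would split $w=w_1+w_2$, where $w_1=G(t)\ast v_0$ is the free heat evolution and $w_2=\int_0^tG(t-s)\ast\tilde g(s)\,ds$ is the Duhamel part with zero data.

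For $w_1$, contractivity of the heat semigroup on $L^\gamma$ and its commutation with $\nabla$ give $\|w_1(t)\|_\gamma\le\|v_0\|_\gamma$ and $\|\nabla w_1(t)\|_\gamma\le\|\nabla v_0\|_\gamma$. Integrating in $t$ produces the factor $T$.

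The term $\Delta w_1$ is the main obstacle, because $v_0\in W^{1,\gamma}$ only and $\|\Delta w_1(t)\|_\gamma\lesssim t^{-1/2}\|\nabla v_0\|_\gamma$. This is integrable in time only after raising to the power $\gamma$ when $\gamma<2$. To handle it I would argue in one of two ways:
\begin{itemize}
\item[(a)] Use that $\int_0^T\|\Delta w_1\|_\gamma^\gamma\,dt$ is controlled by the trace-space norm $\|v_0\|_{B^{2-2/\gamma}_{\gamma,\gamma}}$, and that $W^{1,\gamma}\hookrightarrow B^{2-2/\gamma}_{\gamma,\gamma}$ when $2-2/\gamma<1$.
\item[(b)] Read the lemma's hypothesis $v\in L^\gamma(0,T;W^{2,\gamma})$ as implicitly requiring compatible data, and apply the standard maximal regularity estimate with the initial trace bounded by $\|v_0\|_{W^{1,\gamma}}$ on each unit time interval.
\end{itemize}
In either case the factor $T$ is harmless: for $T\ge1$, splitting $[0,T]$ into unit intervals gives a bound $C\,T(\cdots)$, and for $T<1$ one uses the integrable singularity directly.

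For $w_2$, I would invoke the classical maximal $L^\gamma$-regularity of the heat equation on $\R^N$ (Ladyzhenskaya--Solonnikov--Ural'ceva, or Calder\'on--Zygmund in parabolic form via the Mikhlin multiplier theorem applied to $|\xi|^2/(i\tau+|\xi|^2)$). This gives $\|\Delta w_2\|_{L^\gamma(\Omega_T)}+\|\partial_tw_2\|_{L^\gamma(\Omega_T)}\le C\|\tilde g\|_{L^\gamma(\Omega_T)}$ with $C$ independent of $T$, by scaling invariance.

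The lower-order terms $w_2$ and $\nabla w_2$ need some care, since the heat equation has no decay that would keep them uniform in $T$. Here I would return to the original variables instead: write $v_2=e^{-\lambda t}w_2$, which solves $v_t=\Delta v-\lambda v+g$, and use the strictly negative generator. Applying Mikhlin to $1/(i\tau+|\xi|^2+\lambda)$ and $\xi/(i\tau+|\xi|^2+\lambda)$, after extending $g$ by zero to $t\in\R$, gives $T$-independent bounds in $L^\gamma$ for $v_2$ and $\nabla v_2$. For the weighted versions, I would instead apply the semigroup bounds \eqref{Lp Estimates-2}--\eqref{Lp Estimates-3} with $p=q$ inside Duhamel's formula. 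Because of the $e^{-\lambda(t-s)}$ factor relative to the weight, Young's inequality in time leaves a constant depending on $\lambda$; this constant is $T$-independent and absorbed into $C_{\gamma,N}$.

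Finally, combining the estimates for $w_1$ and $w_2$ and undoing the substitution yields \eqref{maximal-regularity-eq1-1}.
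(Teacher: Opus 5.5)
Your substitution $w=e^{\lambda t}v$ is correct, and it is exactly what shows the plan cannot close. It makes the lemma \emph{equivalent} to a $T$-independent estimate for the undamped heat equation $w_t=\Delta w+\tilde g$, $w(0)=v_0$. Your bounds for $w_1,\nabla w_1$ and for $\Delta w_2$ are fine; the last is scale invariant.

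\textbf{Lower-order terms.} The step that fails is the weighted bound for $v_2$ and $\nabla v_2$. In Duhamel's formula the weight cancels the damping exactly:
$e^{\lambda t}v_2(t)=\int_0^t e^{\lambda t}e^{-\lambda(t-s)}G(t-s)\ast g(s)\,ds=\int_0^t G(t-s)\ast\tilde g(s)\,ds=w_2(t)$.
So no factor $e^{-\lambda(t-s)}$ is left, and Young's inequality in time only gives $\|w_2\|_{L^\gamma(\Omega_T)}\le T\|\tilde g\|_{L^\gamma(\Omega_T)}$. Going back to $v$ is just a relabeling. The unweighted Mikhlin bound for $(i\tau+|\xi|^2+\lambda)^{-1}$ says nothing about the weighted norm.

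No other argument can rescue this, because the inequality is false for the lower-order terms. Take $v_0=0$ and $g=e^{-\lambda t}\tilde g$, with $0\le\tilde g\not\equiv0$ smooth and supported in $[0,1]\times B_1$. For $R\ge1$ the forcing $\tilde g_R(t,x)=\tilde g(t/R^2,x/R)$ produces $w_R(t,x)=R^2w(t/R^2,x/R)$. With $T=2R^2$,
\[
\frac{\int_0^{T}\!\int_{\mathbb{R}^N}|w_R|^\gamma\,dx\,dt}{\int_0^{T}\!\int_{\mathbb{R}^N}|\tilde g_R|^\gamma\,dx\,dt}
=R^{2\gamma}\,\frac{\int_0^{2}\!\int_{\mathbb{R}^N}|w|^\gamma\,dx\,dt}{\int_0^{2}\!\int_{\mathbb{R}^N}|\tilde g|^\gamma\,dx\,dt}\longrightarrow\infty .
\]
The same holds with the factor $R^{\gamma}$ for $\nabla w_R$; only $\Delta w_R$ is scale invariant. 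For $\gamma\le 1+\frac2N$ a fixed $\tilde g$ already suffices: for $t\ge1$, $w(t)=G(t-1)\ast w(1)$ has $\|w(t)\|_\gamma\sim t^{-\frac N2(1-\frac1\gamma)}$, which is not $\gamma$-integrable on $(1,\infty)$.

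A correct statement needs one of two changes:
\begin{itemize}
\item a constant growing polynomially in $T$ for the $|v|^\gamma$ and $|\nabla v|^\gamma$ terms, or
\item a strictly weaker weight $e^{\lambda'\gamma t}$ with $\lambda'<\lambda$.
\end{itemize}
In the second case $e^{\lambda' t}v$ solves $w_t=\Delta w-(\lambda-\lambda')w+e^{\lambda' t}g$. Your semigroup/Mikhlin argument then gives a constant depending on $\lambda-\lambda'$ but not on $T$.

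\textbf{Initial data for $\Delta v$ when $\gamma>2$.} Your option (b) is unjustified here. The trace space $B^{2-2/\gamma}_{\gamma,\gamma}$ then requires more than one derivative and is not controlled by $\|v_0\|_{W^{1,\gamma}}$, and the estimate genuinely fails. Take $g=0$, $T=1$ and $v_0=k^{-1}\phi(x)\sin(kx_1)$ with $\phi\in C_c^\infty(\mathbb{R}^N)$. These data stay bounded in $W^{1,\gamma}$. However, $\|\Delta e^{t\Delta}v_0\|_\gamma\ge\|\Delta v_0\|_\gamma-t\|\Delta^2 v_0\|_\gamma\gtrsim k$ for $t\le ck^{-2}$, so $\int_0^1\|\Delta e^{t\Delta}v_0\|_\gamma^\gamma\,dt\gtrsim k^{\gamma-2}\to\infty$. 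Your option (a) is valid only for $\gamma\le 2$. At $\gamma=2$, Plancherel gives $\int_0^\infty\|\Delta e^{t\Delta}v_0\|_2^2\,dt=\frac12\|\nabla v_0\|_2^2$.

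The paper gives no argument of its own for this lemma; it only cites Lemma 2.3 of its earlier work. It later applies the lemma with $\gamma=2$ and with $\gamma=p+2>3$, so both defects affect how the estimate is used.
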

 The proof of the above lemma for  $\lambda =1$ can be found in \cite[Lemma 2.3]{hassan2025global}. The general case follows by same argument.

\subsection{A Class of Exponential Decay Functions}

The first lemma in this subsection is a useful exponential decay function that would be helpful in controlling non-integrability. The proof of this lemma can be found in \cite[Lemma 2.1]{hassan2025global}.

\begin{lem}
\label{psi-lm}
Take a smooth decreasing function $f$ on $\R$ such that
\[
f(r)=1\quad\text{when }r\leq N\quad\text{ and }\quad f(r)=2^{-1} e^{N+1-r}\quad\text{when }r\geq N+1.
\]
For each fixed $\kappa\in (0,1)$ and for some $\gamma\in (0,1)$, define
\beq\lb{varphi}
\psi(x):=\psi_\kappa(x)=f(\gamma\kappa|x|).
\eeq
There exists dimensional constants $C,\gamma>0$ such that for any $\kappa\in (0,1)$, $\psi$ from 
 \eqref{varphi} satisfies for all $x\in\R^N$,
\begin{equation}\label{phi1}
 0<\psi(x)\leq 1,\quad |\nabla \psi(x)|\le \k\, \psi(x),\quad |D^2 \psi(x)|\le \k^2\,\psi(x),
\end{equation}
\beq\lb{phi2}
\psi(x)\leq C\psi(y)\quad \text{ whenever }\quad |x-y|\leq \kappa^{-1},
\eeq
and
\beq\lb{phi3}
\kappa^{N}\int_{\R^N}\psi(x) dx,\quad \sum_{\kappa z\in \Z^N}\psi(z)\leq C.
\eeq

\end{lem}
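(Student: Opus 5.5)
We construct $\psi$ explicitly and check the properties by direct calculation. The first thing to fix is the normalization in \eqref{varphi}. We set $\psi(x)=f(\gamma\kappa|x|)$, where $f$ is smooth, decreasing, equal to $1$ on $(-\infty,N]$ and equal to $\tfrac12 e^{N+1-r}$ on $[N+1,\infty)$. Because $f$ is constant near $r=0$, $\psi$ is smooth on all of $\R^N$, including at the origin. Since $f$ is decreasing with $f(r)\to0$ and $f>0$, we get $0<\psi\le1$.

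\textbf{Derivative bounds \eqref{phi1}.} We need constants $A,B$ with $|f'(r)|\le A f(r)$ and $|f''(r)|\le Bf(r)$ for all $r\ge0$.
\begin{itemize}
\item On $[0,N]$ both derivatives vanish.
\item On $[N,N+1]$, $f\ge \tfrac12$ while $f',f''$ are bounded, so such $A,B$ exist.
\item On $[N+1,\infty)$ we have $f'=-f$ and $f''=f$, so $A=B=1$ works there.
\end{itemize}
For the gradient, $\nabla\psi=\gamma\kappa f'(\gamma\kappa|x|)\,x/|x|$, so $|\nabla\psi|\le \gamma A\kappa\psi$. For the Hessian,
$$D^2\psi=\gamma^2\kappa^2 f''\,\hat x\otimes\hat x+\gamma\kappa f'\,\frac{I-\hat x\otimes\hat x}{|x|}, \qquad \hat x=x/|x|.$$
The second term vanishes unless $\gamma\kappa|x|\ge N$, in which case $1/|x|\le \gamma\kappa/N$. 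Hence $|D^2\psi|\le \gamma^2\kappa^2 (B+A/N)\psi$. Choosing $\gamma\le\min\{1/A,(B+A/N)^{-1/2}\}$ (and $\gamma<1$) gives \eqref{phi1}.

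\textbf{Comparison \eqref{phi2}.} Suppose $|x-y|\le\kappa^{-1}$. Then $\big|\gamma\kappa|x|-\gamma\kappa|y|\big|\le\gamma\le1$. It therefore suffices to show $f(r)\le Cf(s)$ whenever $|r-s|\le1$. We may take $s>r$, since otherwise $f(r)\le f(s)$ by monotonicity.
\begin{itemize}
\item If $s\le N+1$, then $f(s)\ge f(N+1)=\tfrac12$, so $f(r)\le 1\le 2f(s)$.
\item If $s>N+1$, then $\log f$ has slope bounded by $A$, so $f(r)\le e^{A}f(s)$.
\end{itemize}

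\textbf{Integral and sum bounds \eqref{phi3}.} Substituting $y=\gamma\kappa x$,
$$\kappa^N\int\psi\,dx=\gamma^{-N}\int_{\R^N} f(|y|)\,dy,$$
which is a finite dimensional constant because $f$ decays exponentially. For the sum over the lattice $z\in\kappa^{-1}\Z^N$, each point $z$ has a cube $Q_z$ of side $\kappa^{-1}$ containing it, and these cubes are disjoint. Every $x\in Q_z$ satisfies $|x-z|\le\sqrt N\kappa^{-1}$. Applying the comparison estimate $\lceil\sqrt N\rceil$ times gives $\psi(z)\le C^{\lceil\sqrt N\rceil}\psi(x)$ on $Q_z$. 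Therefore
$$\sum\psi(z)\le C'\kappa^N\sum_z\int_{Q_z}\psi\,dx=C'\kappa^N\int\psi\,dx,$$
which is bounded by the previous step.

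The only delicate step is the Hessian term $f'/|x|$. It is controlled because $f'$ vanishes near the origin, which is exactly why $f$ is taken to be constant on $[0,N]$. All constants depend only on $N$ and the fixed profile $f$, and not on $\kappa$.
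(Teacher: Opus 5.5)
Your proof is correct and complete. The paper gives no argument for this lemma and only refers to \cite[Lemma 2.1]{hassan2025global}, so your direct computation is a self-contained replacement for that citation rather than a competing route.

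The four ingredients are exactly what is needed:
\begin{itemize}
\item the bounds $|f'|\le Af$ and $|f''|\le Bf$ on $[0,\infty)$, which hold because $f\ge \tfrac12$ on $[N,N+1]$ and $f'=-f$, $f''=f$ beyond $N+1$;
\item the observation that the term $f'(\gamma\kappa|x|)/|x|$ in the Hessian is only active where $|x|\ge N/(\gamma\kappa)$;
\item the reduction of \eqref{phi2} to a one-variable comparison via $\bigl|\gamma\kappa|x|-\gamma\kappa|y|\bigr|\le\gamma$;
\item the covering of $\mathbb{R}^N$ by disjoint cubes of side $\kappa^{-1}$ attached to the points of $\kappa^{-1}\mathbb{Z}^N$.
\end{itemize}

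Two optional simplifications. First, $|(\log f)'|\le A$ holds on all of $\mathbb{R}$. This gives $f(r)\le e^{A}f(s)$ whenever $|r-s|\le 1$, with no case split. Applied with $|r-s|\le \gamma\sqrt N$, it also gives $\psi(z)\le e^{A\gamma\sqrt N}\psi(x)$ on each cube $Q_z$ directly, without chaining \eqref{phi2} $\lceil\sqrt N\rceil$ times. Second, the statement does not say whether $|D^2\psi|$ means the operator norm or the Frobenius norm. This only changes $B+A/N$ by a dimensional factor, since $\|I-\hat x\otimes\hat x\|_F=\sqrt{N-1}$, and that factor is absorbed into the choice of $\gamma$.
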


Next, we state a localized Sobolev interpolation inequality in terms of the exponential decay function from the previous lemma that will be useful for deriving local \(L^p\)-estimates. Its proof appears in \cite[Lemma 2.2]{hassan2025global}.

\begin{lem}
\label{I1-lm}
Assume that \(N\ge 3\), and let
\(
2^*:=\frac{2N}{N-2}>2
\)
denote the Sobolev conjugate exponent of \(2\). Then there exists a constant \(C>0\), depending only on the dimension, such that for any \(\delta,\kappa>0\) and \(r>1\), we have
\begin{align*}
\kappa^2\int_{\R^N}u^{2r}\psi^2
&\le \delta\|\nabla(u^{r}\psi)\|_{L^2(\R^N)}^2
+C\kappa^{2+2\theta_r}\delta^{-\theta_r}\left[\int_{\R^N}u^{r+1}\psi^{q_r}\,dx\right]^{2/q_r},
\end{align*}
where \(u=u(x)\) is any function such that \(u^r\) is locally uniformly bounded in \(W^{1,2}\), \(\psi\) is as in Lemma \ref{psi-lm}, and
\[
q_r:=\frac{r+1}{r}\in(1,2),
\qquad
\theta_r:=\frac{N(r-1)}{2(r+1)}>0.
\]
\end{lem}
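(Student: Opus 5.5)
The plan is to apply the standard Sobolev inequality on all of $\mathbb R^N$ to the single function
\[
w:=u^{r}\psi ,
\]
and then interpolate between $L^{q_r}$ and $L^{2^*}$. The right-hand side of the claim is already written in terms of $w$. Indeed, $rq_r=r+1$ and $\psi^{q_r}=(\psi)^{q_r}$, so
\[
\int_{\mathbb R^N}u^{r+1}\psi^{q_r}\,dx=\int_{\mathbb R^N}w^{q_r}\,dx=\|w\|_{L^{q_r}}^{q_r}.
\]
Hence the bracket equals $\|w\|_{L^{q_r}}^{2}$. The left-hand side is $\kappa^2\|w\|_{L^2}^2$. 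So the claim reduces to
\[
\kappa^2\|w\|_{L^2}^2\le \delta\|\nabla w\|_{L^2}^2+C\kappa^{2+2\theta_r}\delta^{-\theta_r}\|w\|_{L^{q_r}}^2 ,
\]
which is a Gagliardo--Nirenberg type inequality with $q_r<2<2^*$.

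\textbf{Step 1 (admissibility of $w$).} I would first check that $w\in W^{1,2}(\mathbb R^N)$, so that Sobolev's inequality $\|w\|_{L^{2^*}}\le S_N\|\nabla w\|_{L^2}$ applies (this needs $N\ge3$).
\begin{itemize}
\item Cover $\mathbb R^N$ by unit cubes $Q_z$, $z\in\mathbb Z^N$.
\item Since $u^r$ is locally uniformly bounded in $W^{1,2}$, we have $\|u^r\|_{W^{1,2}(Q_z)}\le M$ for all $z$.
\item Use $0<\psi\le1$ and $|\nabla\psi|\le\kappa\psi$ from \eqref{phi1}, together with the comparability \eqref{phi2}, which makes $\psi$ essentially constant on each cube.
\item Then $\|w\|_{W^{1,2}}^2\lesssim M^2\sum_z \sup_{Q_z}\psi^2$, and this sum is finite by the exponential decay of $\psi$ (compare \eqref{phi3}).
\end{itemize}
If needed, the identity $\nabla w=\psi\nabla u^r+u^r\nabla\psi$ is justified by truncation of the cut-off.

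\textbf{Step 2 (Hölder interpolation).} Choose $\theta'\in(0,1)$ with
\[
\frac12=\frac{\theta'}{2^*}+\frac{1-\theta'}{q_r}.
\]
Using $1/q_r=r/(r+1)$ and $1/2^*=(N-2)/(2N)$, a short computation gives
\[
\theta'=\frac{N(r-1)}{N(r-1)+2(r+1)}\in(0,1),
\qquad\text{hence}\qquad
\frac{\theta'}{1-\theta'}=\frac{N(r-1)}{2(r+1)}=\theta_r,\qquad \frac1{1-\theta'}=1+\theta_r .
\]
Hölder's inequality and Sobolev's inequality then give
\[
\kappa^2\|w\|_{L^2}^2\le \kappa^2 S_N^{2\theta'}\|\nabla w\|_{L^2}^{2\theta'}\|w\|_{L^{q_r}}^{2(1-\theta')}.
\]

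\textbf{Step 3 (Young's inequality).} Apply Young's inequality with exponents $1/\theta'$ and $1/(1-\theta')$, putting $\delta$ on the gradient factor. This yields
\[
\delta\|\nabla w\|_{L^2}^2+C\bigl(\kappa^2\bigr)^{\frac{1}{1-\theta'}}\delta^{-\frac{\theta'}{1-\theta'}}\|w\|_{L^{q_r}}^2
=\delta\|\nabla w\|_{L^2}^2+C\kappa^{2+2\theta_r}\delta^{-\theta_r}\|w\|_{L^{q_r}}^2 ,
\]
which is exactly the claim.

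The constant should depend only on $N$, uniformly in $r>1$. Since $\theta'\in(0,1)$, the Young constant $(1-\theta')\theta'^{\theta'/(1-\theta')}\le1$ is harmless. The factor $S_N^{2\theta'/(1-\theta')}=S_N^{2\theta_r}$ is the only place where $r$ enters. To absorb it into a dimensional $C$, I would either accept $C=C(N)^{1+\theta_r}$ or note that $\theta_r<N/2$, so $S_N^{2\theta_r}\le\max(1,S_N)^N$.

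The main obstacle is the bookkeeping in Step 1: justifying $w\in W^{1,2}(\mathbb R^N)$ from merely local uniform bounds on non-integrable $u$. This relies on the exponential decay of $\psi$ beating the polynomial growth of the number of cubes at distance $R$. The analytic core (Steps 2 and 3) is routine.
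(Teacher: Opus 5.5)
Your proof is correct. With $w=u^{r}\psi$ the claim becomes exactly $\kappa^2\|w\|_{L^2}^2\le\delta\|\nabla w\|_{L^2}^2+C\kappa^{2+2\theta_r}\delta^{-\theta_r}\|w\|_{L^{q_r}}^2$, and your exponent $\theta'=\frac{N(r-1)}{N(r-1)+2(r+1)}$, the identities $\theta'/(1-\theta')=\theta_r$ and $1/(1-\theta')=1+\theta_r$, and the bound $S_N^{2\theta_r}\le\max(1,S_N)^N$ (from $2\theta_r<N$) all check out, so the constant depends only on $N$.

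The paper gives no proof here and cites \cite[Lemma 2.2]{hassan2025global}; the statement's explicit $2^*$ and global norm $\|\nabla(u^r\psi)\|_{L^2(\R^N)}$ point to the same Sobolev--H\"older--Young route you use, with your Step 1 supplying the decay at infinity that Sobolev's inequality needs.
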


\subsection{Perturbed Chemotaxis Models}
We prove the existence of global weak solutions to \eqref{main-eq} by approximating them with classical solutions of the following perturbed problem:
\begin{equation}
\label{main-perturbed-eq}
\begin{cases}
u_t = m\nabla\cdot \big((\eps+u)^{m-1}\nabla u\big) - \chi \nabla \cdot (u \nabla v) + f(u),\quad & x \in \mathbb{R}^N, \,\, t>0, \\
\tau v_t = \Delta v - \lambda v + \mu u, \quad & x \in \mathbb{R}^N,\,\,  t>0,\\
u(0,x) = u_0(x),\,\, \tau v(0, x) = \tau v_0(x), \quad & x\in \R^N,
\end{cases}
\end{equation}
where \(\eps\in(0,1)\) and $f(u) \le au - bu^2$, and initials are smooth enough. We refer to \eqref{main-perturbed-eq} as the perturbed problem associated with \eqref{main-eq}. Throughout the rest of the paper, we use the notation \(\tau v_0\) to encode the initial condition for \(v\) in both cases. More precisely, when \(\tau=1\), the condition \(\tau v(0,x)=\tau v_0(x)\) means \(v(0,x)=v_0(x)\). When \(\tau=0\), the second equation is elliptic and no initial condition is imposed on \(v\); in this case, \(\tau v(0,x)=\tau v_0(x)\) is understood as the identity \(0=0\).

In the following, we write \(v\in C^{1+\alpha/2,\,2+\alpha}\) to mean that \(v\) is \(C^{1+\alpha/2}\) in time and \(C^{2+\alpha}\) in space. Likewise,
\(
u \in C^\alpha([0,T)\times\R^N)\cap C^{1+\alpha/2,\,2+\alpha}((0,T)\times\R^N)
\)
means that \(u\) is H\"older continuous in space and time up to the initial time, and is \(C^{1+\alpha/2}\) in time and \(C^{2+\alpha}\) in space for positive times.

\begin{defin}
\label{D.2}
Let \(\tau\in\{0,1\}\). Assume that $u_0\in C^{1+\alpha}_{\mathrm{unif}}(\R^N)$, $\tau v_0\in C^{2+\alpha}_{\mathrm{unif}}(\R^N)$.
A pair \((u_\eps,v_\eps)\) of nonnegative functions defined on \([0,T)\times\R^N\) is called a classical solution of \eqref{main-perturbed-eq} with \(\eps>0\) on \([0,T)\) if the following hold:
    \[
    u_\eps \in C^\alpha([0,T)\times\R^N)\cap C^{1+\alpha/2,\,2+\alpha}((0,T)\times\R^N),
    \]
    and, moreover,
    \[
    v_\eps \in C\big([0,T);C^{2+\alpha}(\R^N)\big)
    \quad \text{if } \tau=0,
    \]
    while
    \[
    v_\eps \in C^{1+\alpha/2,\,2+\alpha}((0,T)\times\R^N)
    \cap C\big([0,T);C^{2+\alpha}(\R^N)\big)
    \quad \text{if } \tau=1.
    \]
\(u_\eps(0,\cdot)=u_0\), \(\tau v_\eps(0,\cdot)=\tau v_0\), and \eqref{main-perturbed-eq} is satisfied in the classical sense in \((0,T)\times\R^N\). By a global classical solution we mean a classical solution on $[0, \infty).$
\end{defin}

Next we would prove a lemma which is on a priori estimate of classical solutions of the  perturbed problem \eqref{main-perturbed-eq}. The lemma shows that if the local \(L^p\)-norm of \(u_\varepsilon\) is bounded for some sufficiently large \(p\), then one can obtain \(W^{1,\infty}\)-bounds for \(v_\varepsilon\), \(L^\infty\)-bounds for \(u_\varepsilon\).
\begin{lem}
\label{v-bound-lm} Let $\eps\in(0,1)$, $a,b, \mu, \lambda>0$, $\chi\in\bbR$, $m>1$ and $T>0$, and let $u_0\in L^\infty(\R^N)$ and $\tau v_0\in W^{1,\infty}(\R^N)$ such that $u_0,\tau v_0\geq 0$. Suppose that $(u_\eps, v_\eps)$ is a non-negative classical solution to \eqref{main-perturbed-eq} on $[0,T)$, with initial data $(u_0, v_0)$.
For any $p>N$ and $C_1>0$, independent of $T$ and $\eps$ (but could depend on $p$)  if
\begin{equation}\label{lplocal}
\sup_{t\in [0,T],x_0\in\R^N}\int_{B(x_0,1)} u_\eps^p(t,x)dx\leq C_1,
\end{equation}
then the following holds;
\begin{itemize}
    \item [1)] (Uniform $W^{1,\infty}$ estimate for $v_\eps$) There is a constant  $C$ depending only on $p$, $N$,  $C_1$, and $ \|\tau v_0\|_{W^{1,\infty}}$ such that 
\begin{equation*}
\sup_{t\in [0,T],x\in\R^N}|\nabla v_\eps(t,x)| + \sup_{t\in [0,T],x\in\R^N}|v_\eps(t,x)|<C.
\end{equation*}

\item[2)] (Uniform $L^\infty$ estimate for $u_\eps$)  $u_\eps$ is uniformly bounded in $[0,T]\times\R^N$ with the bound depending on general constants ($m,|\chi|,a, b,N,p$, $\lambda$, $\mu$, $\|u_0\|_\infty$,  $\|\tau v_0\|_{W^{1,\infty}}$), but  independent of $\eps$ and $T$. 

\item[(3)] For any $p\ge m$ and $t\in [0,T]$, we have that $\nabla  (\eps+u_\eps)^{\frac{p+m}{2}}\in  L^2_{\rm loc}(\Omega_T)$,
with a bound depending only on general constant and the diameter of the local spatial domain (but independent of $T$ and $\eps$).
\end{itemize}

\end{lem}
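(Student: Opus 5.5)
Part 1) comes from a semigroup or elliptic representation of $v_\eps$ combined with the local $L^p$ hypothesis \eqref{lplocal}; part 2) from a localized Moser--Alikakos iteration; part 3) from a weighted energy identity.

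\textbf{Part 1).} For $\tau=1$, I write
\[
v_\eps(t)=T(t)v_0+\mu\int_0^t T(t-s)u_\eps(s)\,ds .
\]
For $\tau=0$, I write $v_\eps=\mu\int_0^\infty T(s)u_\eps\,ds$, the resolvent of $\lambda-\Delta$. The term $T(t)v_0$ is bounded in $W^{1,\infty}$ by $\|v_0\|_{W^{1,\infty}}$ via \eqref{L-infty- Estimates-1}.

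For the integral term, I split $\R^N$ into the unit cubes $Q_z$, $z\in\Z^N$, and write $u_\eps=\sum_z u_\eps\mathbf 1_{Q_z}$. Each piece has $\|u_\eps(s)\mathbf 1_{Q_z}\|_{L^p}\le C C_1^{1/p}$. For the kernel, $|\nabla G(t-s,x-y)|$ with $y\in Q_z$ is bounded by $(t-s)^{-1/2}G(2(t-s),x-y)$ times a constant. I therefore apply Hölder on each cube and sum using the Gaussian tail:
\[
\sum_z \sup_{y\in Q_z} G(2\sigma,x-y)^{p'}
\]
stays summable, with total contribution controlled by $C\,\sigma^{-\frac N{2p}}$ for $\sigma\le1$ and by a constant for $\sigma\ge1$.

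This gives
\[
|\nabla v_\eps(t,x)|\le C\int_0^t e^{-\lambda \sigma}\,\sigma^{-\frac12-\frac N{2p}}\,(1+\sigma^{\frac N{2p}})\,d\sigma\, C_1^{1/p}.
\]
The integral is finite precisely because $p>N$, and it is uniform in $t$ and $T$. The same computation without the gradient bounds $v_\eps$.

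\textbf{Part 2).} With $\nabla v_\eps$ bounded, the $u$-equation becomes a porous-medium equation with bounded drift and a logistic term. I test it with $u_\eps^{q-1}\psi$, where $\psi=\psi_\kappa(\cdot-x_0)$ is from Lemma~\ref{psi-lm} and $q\ge p$. Diffusion gives
\[
\frac{4m(q-1)}{(q+m-1)^2}\int|\nabla u_\eps^{\frac{q+m-1}{2}}|^2\psi ,
\]
using $(\eps+u)^{m-1}\ge u^{m-1}$. The chemotaxis term is bounded by
\[
|\chi|(q-1)\|\nabla v_\eps\|_\infty\int u_\eps^{q-1}|\nabla u_\eps|\psi+\dots .
\]
This is absorbed by Young's inequality into the diffusion, leaving $Cq^2\int u_\eps^{q+1-m}\psi$. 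Since $m>1$, the power $q+1-m<q+1$, so the remainder is dominated by the logistic term $-b\int u_\eps^{q+1}\psi$ plus a constant. Terms coming from $\nabla\psi$ are handled with \eqref{phi1}.

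This yields a differential inequality
\[
\frac{d}{dt}\int u_\eps^q\psi+\int u_\eps^q\psi\le C(q),
\]
so $\sup_t\int u_\eps^q\psi$ is bounded for every $q$, uniformly in $T$ and $\eps$. To pass to $L^\infty$ I would run a localized Moser/Alikakos iteration with $q_k=2^k$. In that iteration Lemma~\ref{I1-lm} (or the Gagliardo--Nirenberg inequality on balls, with \eqref{phi2}) controls $\int u^{q_k}$ by the $q_{k-1}$ norm. The main obstacle is to track constants growing like $q_k^{C}$ so that the recursion
\[
M_k\le (Cq_k)^{C}\max\{M_{k-1}^2,1\}
\]
gives a finite limit. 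I expect this to be standard once the diffusion exponent shift $m-1$ is accounted for. An alternative route is to cite De Giorgi/Moser local boundedness for degenerate equations with bounded drift, with constants uniform in $\eps$.

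\textbf{Part 3).} Integrating the same localized identity with exponent $p$ over $(0,T)$ bounds $\int\!\!\int|\nabla(\eps+u_\eps)^{\frac{p+m-1}{2}}|^2\psi$. More directly, testing with $(\eps+u_\eps)^{p-1}\psi$ gives the stated exponent $\frac{p+m}{2}$ up to the obvious index shift. The right-hand side is uniformly bounded by parts 1) and 2), and is independent of $\eps$. The dependence on $T$ is removed by using the exponential weight $e^{t}$ in the differential inequality, or by covering the time interval with unit intervals.
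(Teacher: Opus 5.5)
Your Parts 2 and 3 follow the paper's route. For Part 2 the paper simply invokes the Moser iteration of \cite[Proposition 4.1]{hassan2025global} once $\nabla v_\eps$ is bounded, which is also your fallback. Part 1 is genuinely different, and correct. The paper localizes: it writes the equation for $v_\eps\psi$ and applies the $L^p\to L^{p'}$ semigroup (resp.\ resolvent) bounds to the source $\mu u_\eps\psi-2\nabla v_\eps\cdot\nabla\psi-v_\eps\Delta\psi$. It then absorbs the $O(\kappa)$ commutator terms by taking $\kappa$ small, first with $p'=p$ and then with $p'=\infty$ (this is where $p>N$ enters), and finally translates $\psi$. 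You instead bound the Duhamel/resolvent kernel directly against the uniformly local norm, cube by cube, using Gaussian tails. Your route is shorter, gives explicit constants, and needs no absorption step, which in the paper tacitly uses $\sup_t\|v_\eps\psi\|_{W^{1,p}}<\infty$. The paper's route additionally yields weighted estimates such as \eqref{A-eq1} and \eqref{nablav-est}, i.e.\ $\int(|\nabla v_\eps|^p+v_\eps^p)\psi\lesssim\int u_\eps^p\psi$. These are reused in Section 3 and in Lemma \ref{cross-lm} for exponents that need not exceed $N$. One precision in your computation: on the $O(1)$ cubes near $x$ you must use the true norm $\|G(2\sigma,\cdot)\|_{L^{p'}}\sim\sigma^{-N/(2p)}$. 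Bounding by $\sup G\sim\sigma^{-N/2}$ there would destroy integrability at $\sigma=0$, so the supremum should only be used on the tail cubes.

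In Part 2 there is a step that fails as written for $m>2$. Testing with $u_\eps^{q-1}\psi$, the diffusive flux against the cutoff, $-m\int(\eps+u_\eps)^{m-1}u_\eps^{q-1}\nabla u_\eps\cdot\nabla\psi$, leaves after Young's inequality a remainder of size $C\kappa^2\int u_\eps^{q+m-1}\psi$. For $m>2$ the power $q+m-1$ exceeds $q+1$, so neither \eqref{phi1} nor the logistic term $-b\int u_\eps^{q+1}\psi$ absorbs it. This is exactly the obstruction the paper points out at the start of Section 3. Hence your inequality $\frac{d}{dt}\int u_\eps^q\psi+\int u_\eps^q\psi\le C(q)$ is not justified for $m>2$ by the argument you give.

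The hypothesis \eqref{lplocal} repairs it. Take $w=u_\eps^{(q+m-1)/2}$ and $q+m-1>p$. Gagliardo--Nirenberg on each unit cube $Q_z$, against $\|u_\eps\|_{L^p(Q_z)}\lesssim C_1^{1/p}$, gives $\int_{Q_z}u_\eps^{q+m-1}\le\delta\int_{Q_z}|\nabla w|^2+C_\delta$. By \eqref{phi2}, $\psi$ is comparable to $\psi(z)$ on $Q_z$, and $\sum_{z\in\Z^N}\psi(z)\lesssim\kappa^{-N}$. Therefore $\int u_\eps^{q+m-1}\psi\lesssim\delta\int|\nabla w|^2\psi+C_\delta\kappa^{-N}$, which closes the inequality. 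The same term must be treated this way at every level of your Moser iteration.

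For Part 3, your exponential weight (equivalently, unit time windows) is the right way to make ``independent of $T$'' meaningful. Integrating over all of $[0,t]$, as the paper does, actually leaves a factor $t$ on the right-hand side.
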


\begin{proof}[Proof of Lemma \ref{v-bound-lm}]
Let \(\psi\) be as in Lemma \ref{psi-lm}, with \(\kappa>0\) to be chosen sufficiently small. Throughout the proof, \(C\) and $c$ denotes a positive constant independent of \(\varepsilon\), \(t\), and \(x_0\), while \(C_\kappa\) may depend on \(\kappa\).

\noindent{\bf Proof of property (1) when $\tau = 1$:} 

First, note that \(v_\eps\psi\) satisfies the equation
\begin{align}
\label{v-phi-eq}
(v_\eps\psi)_t=\Delta(v_\eps\psi)-\lambda v_\eps\psi +\left(\mu u_\eps \psi-2\nabla v_\eps\cdot \nabla\psi - v_\eps\Delta \psi \right).
\end{align}
and \(v_0\psi\in W^{1,p}(\R^N)\cap W^{1,\infty}(\R^N)\) for any \( p>1\). Hence, for any \(p\le p'\le \infty\),
\begin{align} \label{vlp'}
\|v_\eps(t,\cdot)\psi\|_{L^{p'}}\nonumber
&\le e^{- \lambda t}\|v_0\psi\|_{ L^{p'}}+ C\sup_{r\in [0,T]}\|u_\eps(r,\cdot)\psi\|_{L^{p}} \int_0^t  e^{-\lambda (t-s)}
(t-s)^{-\big(\frac{1}{p}-\frac{1}{p'}\big)\frac{N}{2}} ds\nonumber\\
&\quad + \kappa \sup_{r\in [0,T]}\Big( 2 \| \nabla v_\eps(r,\cdot) \psi\|_{L^{p}}  +   \|v_\eps(r,\cdot)\psi \|_{L^{p}}\Big)\int_0^t  e^{-\lambda (t-s)} (t-s)^{-\big(\frac{1}{p}-\frac{1}{p'}\big)\frac{N}{2}}ds\nonumber\\
& \le C_\k  + \kappa \left(\sup_{r\in [0,T]}\| \nabla v_\eps(r,\cdot) \psi\|_{L^{p}}  +   \sup_{r\in [0,T]}\|v_\eps(r,\cdot)\psi \|_{L^{p}}\right) \int_0^t  e^{-\lambda (t-s)} (t-s)^{-\big(\frac{1}{p}-\frac{1}{p'}\big)\frac{N}{2}}ds.
\end{align}
Where we used \eqref{lplocal} to get the last inequality. Also, using  \eqref{Lp Estimates-2}-\eqref{L-infty- Estimates-1}  and \(v_0\psi\in W^{1,p}(\R^N)\cap W^{1,\infty}(\R^N)\)  we have that for any \(p\le p'\le\infty\), 
\begin{align}
\label{A-eq}
\| (\nabla  v_\eps (t,\cdot))\psi\|_{L^{p'}}
&\le   \int_0^t e^{-\lambda(t-s)}(t-s)^{-\frac{1}{2}-\big(\frac{1}{p}-\frac{1}{p'}\big)\frac{N}{2}}\| u_\eps\psi -2\nabla v_\eps\cdot\nabla \psi-v_\eps\Delta\psi  \|_{L^p}\,ds\nonumber\\
&\quad +
 \kappa \|v_\eps(t,\cdot) \psi\|_{L^{p'}}
 + C_\k\nonumber\\
 &\lesssim \k \left(\sup_{r\in [0,T]}\| \nabla v_\eps(r,\cdot) \psi\|_{L^{p}}  +   \sup_{r\in [0,T]}\|v_\eps(r,\cdot)\psi \|_{L^{p}}\right) \int_0^t  e^{-\lambda (t-s)} (t-s)^{-\big(\frac{1}{p}-\frac{1}{p'}\big)\frac{N}{2}}ds \nonumber\\
 &\quad +
 \kappa \|v_\eps(t,\cdot) \psi\|_{L^{p'}}
 + C_\k.
\end{align}
 Now for $p'= p$, taking supremum in \(t\in [0,T]\) in \eqref{vlp'} and \eqref{A-eq} then adding the two equations, gives that for $\k$ sufficiently small,
\begin{equation}
\label{A-eq1}
\sup_{0\le t\le T}\|(\nabla v_\eps(t,\cdot))\psi\|_{L^p} + \sup_{0\le t\le T}\|v_\eps(t,\cdot)\psi\|_{L^p}\le C_\kappa.
\end{equation}
In the rest of the proof, we fix one such \(\kappa\) and may drop it from the notation of \(C_\kappa\). 

Taking $p' = \infty$, yields that for $p>N$,
\begin{align*}
\|(\nabla v_\eps(t,\cdot))\psi\|_{L^\infty} + \|v_\eps(t,\cdot)\psi\|_{L^\infty}
&\leq CC_1+C\kappa \left(\sup_{r\in [0,T]}\| \nabla v_\eps(r,\cdot) \psi\|_{L^{p}}  +   \sup_{r\in [0,T]}\|v_\eps(r,\cdot)\psi \|_{L^{p}}\right)\\
&\le C\quad \forall \, t\in [0,T].
\end{align*}
 The last inequality follows from \eqref{A-eq1}. 
Replacing \(\psi(\cdot)\) by \(\psi(\cdot -x_0)\) yields
$$
\| (\nabla  v_\eps (t,\cdot))\psi(\cdot-x_0)\|_{L^{\infty}} + \|   v_\eps (t,\cdot)\psi(\cdot-x_0)\|_{L^{\infty}}\le C
$$
uniformly for all \(t\geq 0\) and \(x\in\bbR^N\).
This implies that
$$
\sup_{0\le t\le T} \|\nabla v_\eps(t,\cdot)\|_{L^\infty} + \sup_{0\le t\le T} \|v_\eps(t,\cdot)\|_{L^\infty}\le C
$$
for \(C\) depending only on \(p\), \(N\), \(C_1\), \(\mu\), \(\lambda\), and \(\|v_0\|_{W^{1,\infty}}\).

\medskip

\noindent{\bf Proof of property (1) when \(\tau = 0\):}
In this case, \(v_\eps\psi\) satisfies
\begin{equation*}
\Delta (v_\eps\psi)-\lambda v_\eps \psi -2\nabla v_\eps\cdot\nabla \psi-v_\eps\Delta \psi +\mu u_\eps\psi=0.
\end{equation*}
Hence, using the resolvent \((\Delta -\lambda I)^{-1}\) on \(\R^N\), we obtain
\begin{equation}
\label{main-eq3-psi}
v_\eps(t,\cdot)\psi(\cdot)=\int_0^\infty e^{(\Delta -\lambda I)s}\Big(-2\nabla v_\eps(t,\cdot)\cdot\nabla \psi-v_\eps(t,\cdot)\Delta \psi+\mu u_\eps(t,\cdot)\psi\Big)\,ds.
\end{equation}
We have that for any \(p\le p'\le\infty\) and \(t\in[0,T]\),
\begin{align}
\label{est-tau0'}
\|v_\eps(t,\cdot)\psi(\cdot)\|_{L^{p'}}
&\le C \kappa \Big(\|\nabla v_\eps (t,\cdot) \psi\|_{L^p}+
\|v_\eps(t,\cdot) \psi\|_{L^p}\Big) \int_0^\infty e^{-\lambda s} s^{-\big(\frac{1}{p}-\frac{1}{p'}\big)\frac{N}{2}}\,ds\nonumber\\
& \quad+ C  \|u_\eps(t,\cdot)\psi\|_{L^p}  \int_0^\infty e^{-\lambda s} s^{-\big(\frac{1}{p}-\frac{1}{p'}\big)\frac{N}{2}}\,ds
\end{align}
and
\begin{align}
\label{est-tau0}
\|(\nabla v_\eps(t,\cdot)) \psi\|_{L^{p}}
&\le \kappa{\|v_\eps(t,\cdot)\psi\|_{L^{p}}}+C \|u_\eps(t,\cdot)\psi\|_{L^p}\int_0^\infty e^{-\lambda s}s^{-\frac{1}{2}} \,ds\nonumber\\
&\quad + C\kappa \Big(\|\nabla v_\eps(t,\cdot)\psi\|_{L^p}+\|v_\eps(t,\cdot)\psi\|_{L^p}\Big) \int_0^\infty e^{-\lambda s}s^{-\frac{1}{2}} \,ds.
\end{align}
From \eqref{est-tau0} and \eqref{est-tau0'}, we have
\begin{equation}\label{eq:3.11}
    \|(\nabla v_\eps(t,\cdot)) \psi\|^p_{L^{p}} + \| v_\eps(t,\cdot) \psi\|^p_{L^{p}} \le C \|u_\eps(t,\cdot)\psi\|^p_{L^p}+ 2C\kappa \Big(\|\nabla v_\eps(t,\cdot)\psi\|^p_{L^p}+\|v_\eps(t,\cdot)\psi\|^p_{L^p}\Big).
\end{equation}
Thus choosing \(\kappa\) small enough, we have
\begin{equation}\label{nablav-est}
    \int_{\R^N}|\nabla v_\eps|^p\psi + \int_{\R^N}v_\eps^p\psi\le C\int_{\R^N}u_\eps^p\psi.
\end{equation}
Then using the assumption \eqref{lplocal} together with a similar argument as in the case \(\tau =1\), we conclude that
$$
\sup_{0\le t\le T} \|\nabla v_\eps(t,\cdot)\|_{L^\infty} + \sup_{0\le t\le T} \|v_\eps(t,\cdot)\|_{L^\infty}\le C,
$$
for \(C\) depending only on \(p\), \(N\), \(C_1\), \(\mu\), \(\lambda\), and \(\|v_0\|_{W^{1,\infty}}\).

\medskip

\noindent{\bf Proof of property (2):} From property (1), it follows that \(\nabla v_\eps\) is a given \(L^\infty\)-vector field. This, together with assumption \eqref{lplocal} and \cite[Proposition 4.1]{hassan2025global}, implies that there exists a constant \(C>0\), depending only on the general constants but independent of \(T\) and \(\varepsilon\), such that
\[
\|u_\varepsilon\|_{L^\infty}\le C.
\]
The proof of \cite[Proposition 4.1]{hassan2025global} is based on a Moser iteration argument.

\medskip

\noindent{\bf Proof of property (3):} Multiplying the $u_\eps$ equation by $(u_\eps+\varepsilon)^p \psi$, then integrating give
\begin{align*}
&\quad\, \frac{1}{p+1}\frac{d}{dt}\int_{\R^N} (u_\eps +\eps)^{p+1}\psi  dx\nonumber \\
&\le \int_{\R^N} (u_\eps +\eps)^p\psi\nabla\cdot\left[m(u_\eps+\eps)^{m-1}\nabla u_\eps - \chi u_\eps\nabla v_\eps\right] 
   + u_\eps^{p}\psi(a u_\eps - b u_\eps^2)\nonumber \\
&\leq  -c\int_{\R^N} (u_\eps +\eps)^{p+m-2}|\nabla u_\eps|^2\psi
 + C\int_{\R^N}(u_\eps+\eps)^{p+m-1}|\nabla u_\eps||\nabla\psi| \nonumber\\
&\quad+ C \int_{\R^N} (u_\eps +\eps)^{p} |\nabla u_\eps||\nabla v_\eps|\,\psi
 + C\int_{\R^N} (u_\eps +\eps)^{p+1} |\nabla v_\eps||\nabla \psi|
 +\int_{\R^N}(u_\eps +\eps)^{p}(a u_\eps - b u_\eps^2)\psi\nonumber\\
&\leq  -c\int_{\R^N} (u_\eps +\eps)^{p+m-2}|\nabla u_\eps|^2\psi
 + C\kappa\int_{\R^N}(u_\eps+\eps)^{p+m}\psi
 + C \int_{\R^N} (u_\eps +\eps)^{p+2-m} |\nabla v_\eps|^2\,\psi\nonumber\\
&\quad + C\kappa\int_{\R^N} (u_\eps +\eps)^{p+1} |\nabla v_\eps|\psi
 +a\int_{\R^N}u_\eps ^{p+1}\psi.
\end{align*}
The last inequality holds by Young's inequality. Therefore, integrating this inequality in time gives
\begin{align*}
&\iint_{\Omega_t}  (u_\eps+\varepsilon)^{p+m-2} |\nabla u_\eps|^2 \psi \, dx \, ds\lesssim
\int_{\mathbb{R}^N} (u_{0}^{p+1}(x) + \varepsilon) \psi \, dx
+ \int_{\R^N}(u_\eps+\eps)^{p+m}\psi \nonumber\\
&\quad +\int_{\R^N} (u_\eps +\eps)^{p+2-m} |\nabla v_\eps|^2\,\psi
+ \int_{\R^N} (u_\eps +\eps)^{p+1} |\nabla v_\eps|\psi
+ \int_{\R^N}u_\eps ^{p+1}\psi\nonumber\\
&\quad\lesssim \kappa^{-N}.
\end{align*}

The last inequality follows from properties (1) and (2), the fact that \(p\ge m\), the regularity of the initial data, and Lemma \ref{psi-lm}. Indeed, properties (1) and (2) imply that
\[
v_\varepsilon \in W^{1,\infty}(\Omega_T)
\qquad\text{and}\qquad
\|u_\varepsilon\|_{L^\infty(\Omega_T)}\le C.
\]
Consequently,
\(
\nabla (u_\varepsilon+\varepsilon)^{\frac{p+m}{2}}
\)
is locally uniformly bounded in \(L^2(\Omega_T)\) for any $p\ge m$. This completes the proof of the lemma.
\end{proof}
The next Lemma is on uniform H\"older continuity of globally bounded classical solutions of the perturbed problem \eqref{main-perturbed-eq}.

\begin{lem}\label{holdercty-lem}
Let $\varepsilon\in(0,1)$, and let $(u_\varepsilon,v_\varepsilon)$ be a globally defined bounded non-negative classical solution of \eqref{main-perturbed-eq}. Assume that there exists a constant $C>0$, independent of $\varepsilon$, such that
\[
\|u_\varepsilon\|_{L^\infty((0,\infty)\times\mathbb R^N)}
+
\|v_\varepsilon\|_{L^\infty((0,\infty);W^{1,\infty}(\mathbb R^N))}
\le C .
\]
Then there exists $\alpha\in(0,1)$, depending only on $m$ and $C$, such that for every $t_0>0$ there exists a constant $C'>0$, depending only on
\(
m,\ |\chi|,\ a,\ b, \ \lambda, \ \mu, \ N,\ t_0,\ C,
\)
but independent of $\varepsilon$, such that
\[
\|u_\varepsilon\|_{C^\alpha([t_0,\infty)\times\mathbb R^N)}\le C' .
\]
If, in addition, $u_\varepsilon(0,\cdot)$ is H\"older continuous on $\mathbb R^N$, then $u_\varepsilon$ is H\"older continuous on $[0,\infty)\times\mathbb R^N$, with a bound depending also on the H\"older norm of $u_\varepsilon(0,\cdot)$.
\end{lem}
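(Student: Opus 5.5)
The plan is to treat the $u_\varepsilon$-equation as a degenerate (uniformly in $\varepsilon$) parabolic equation of porous medium type with bounded lower-order terms, and then invoke the De Giorgi--DiBenedetto intrinsic scaling theory for degenerate parabolic equations. We write
\[
u_{\varepsilon,t}=\nabla\cdot\big(\mathbf A_\varepsilon(t,x,u_\varepsilon,\nabla u_\varepsilon)\big)+B_\varepsilon(t,x,u_\varepsilon),
\]
with $\mathbf A_\varepsilon = m(\varepsilon+u)^{m-1}\nabla u-\chi u\nabla v_\varepsilon$ and $B_\varepsilon=f(u)$. By the hypothesis, $\|u_\varepsilon\|_\infty+\|\nabla v_\varepsilon\|_\infty\le C$, so $|\chi u_\varepsilon\nabla v_\varepsilon|\le C$ and $|B_\varepsilon|\le C$. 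It is natural to pass to the variable $w_\varepsilon=\varepsilon+u_\varepsilon$, which satisfies the same equation with diffusion $m w^{m-1}\nabla w=\nabla w^m$. The structure conditions
\[
\mathbf A\cdot\nabla w\ge m w^{m-1}|\nabla w|^2 - C w^{m-1}\cdot w^{2-m}\ldots
\]
(after Young's inequality, absorbing the drift by $\frac m2 w^{m-1}|\nabla w|^2 + C w^{1-m}u^2|\nabla v|^2$) then need care near $w=0$. Since $u\le w$, we have $u^2 w^{1-m}\le w^{3-m}$. This is bounded when $m\le3$, and for $m>3$ we handle it by rewriting the drift as a bounded vector field acting on $w$ times bounded coefficients. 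This places the equation in DiBenedetto's class with structure
\[
\mathbf A\cdot\nabla w\ge C_0 w^{m-1}|\nabla w|^2-\varphi_0,\qquad |\mathbf A|\le C_1 w^{m-1}|\nabla w|+\varphi_1,\qquad |B|\le\varphi_2,
\]
with $\varphi_i$ bounded uniformly in $\varepsilon$ (the usual setting of \cite[Ch.~III]{}-type results for $\nabla\cdot(|u|^{m-1}\nabla u)$ with lower-order terms). The $\varepsilon$-shift only helps, since it makes the equation less degenerate, and the constants in the intrinsic estimates do not depend on a lower bound for $w$.

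The key steps, in order, are as follows. First, we derive local energy (Caccioppoli) inequalities for truncations $(w-k)_\pm$ on cylinders, using cutoffs and the uniform bounds above; the drift and logistic terms contribute only bounded right-hand sides. Second, we run DiBenedetto's intrinsic-scaling alternative: on cylinders scaled by $\omega^{m-1}$ (where $\omega$ is the oscillation), either $w$ stays away from $0$ on a large portion, in which case the equation is uniformly parabolic and De Giorgi lemmas reduce the oscillation, or $w$ is near its infimum $\approx 0$, where the degenerate alternative with an expansion-of-positivity argument applies. This yields a reduction of oscillation $\omega_{n+1}\le \max\{\eta\omega_n, C\rho_n^{\beta}\}$ with $\eta<1$ and $\beta,\eta$ depending only on $m,N$ and the structural constants. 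Third, iterating gives interior H\"older continuity with exponent $\alpha$ on $[t_0,\infty)\times\R^N$. Uniformity in space and in $t\ge t_0$ follows because all constants are translation invariant and the cylinders have size controlled by $t_0$. Since $w_\varepsilon-u_\varepsilon=\varepsilon$ is constant, the estimate transfers to $u_\varepsilon$.

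For the statement up to $t=0$, we repeat the same scheme with cylinders touching $t=0$, replacing interior cutoffs in time by the initial datum in the energy estimates. The boundary-in-time version of DiBenedetto's theory gives H\"older continuity up to $t=0$, with the modulus also depending on the H\"older norm of $u_\varepsilon(0,\cdot)$.

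The main obstacle is verifying the structure conditions uniformly in $\varepsilon$ near the degeneracy $u\approx0$, specifically controlling the chemotactic flux $\chi u\nabla v_\varepsilon$ relative to the degenerate diffusion $w^{m-1}|\nabla w|^2$ so that the drift enters only as a bounded lower-order term. We will try first to keep the drift as the bounded flux $\varphi_1=|\chi| C^2$ in the upper bound on $|\mathbf A|$, and in the coercivity estimate to test directly with $(w-k)_\pm\zeta^2$. In that computation the drift term is $\int \chi u\nabla v\cdot\nabla(w-k)_\pm\zeta^2$, which Young's inequality bounds by $\frac12\int w^{m-1}|\nabla(w-k)_\pm|^2\zeta^2 + C\int_{\{(w-k)_\pm>0\}} u^2w^{1-m}$. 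The remaining integral is measure-controlled on level sets where $w\ge k$ in the minus-truncation case, and this is exactly what the De Giorgi scheme tolerates.
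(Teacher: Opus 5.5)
Your overall strategy, De Giorgi/DiBenedetto intrinsic scaling for a porous-medium-type equation with bounded lower-order terms, is the same family of arguments the paper relies on. The way you set it up, however, has a genuine gap for $m>3$. It is exactly the obstacle you flagged and did not resolve.

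In the variable $w=\varepsilon+u_\varepsilon$ with truncations $(w-k)_\pm$, the only way to absorb the chemotactic flux is Young's inequality against $w^{m-1}|\nabla(w-k)_\pm|^2$. This leaves the term $u^2w^{1-m}|\nabla v_\varepsilon|^2$. Since $u=w-\varepsilon$, one has $\sup_{u\ge 0}u^2(u+\varepsilon)^{1-m}=c_m\varepsilon^{3-m}$ when $m>3$, attained at $u=2\varepsilon/(m-3)$. So there is no $\varepsilon$-uniform bound $\varphi_0$, and in the limit the quantity $u^{3-m}$ is genuinely unbounded near $u=0$.

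Your proposed fix, writing the flux as $w\vec b$ with $\vec b=\chi\frac{u}{w}\nabla v_\varepsilon$ bounded, changes nothing. Young's inequality then leaves $|\vec b|^2w^{3-m}$, which is literally the same term. Your last paragraph also has the level set backwards: $(w-k)_->0$ exactly on $\{w<k\}$. That is where $w^{1-m}$ is large, so the leftover is not "measure-controlled on $\{w\ge k\}$". As written, your argument gives uniform structure conditions only for $1<m\le 3$. The lemma is stated for every $m>1$ and is used that way in the proof of Theorem 1.2.

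The missing idea is to put the degeneracy in the time derivative rather than in the flux, which is what the paper does. Set $z=(u_\varepsilon+\varepsilon)^m\ge\varepsilon^m$. Then $\partial_t z^{1/m}=\nabla\cdot\vec a+B$, where $\vec a=\nabla z-\chi(z^{1/m}-\varepsilon)\nabla v_\varepsilon$ and $B=a(z^{1/m}-\varepsilon)-b(z^{1/m}-\varepsilon)^2$. Because $(z^{1/m}-\varepsilon)\nabla v_\varepsilon=u_\varepsilon\nabla v_\varepsilon$ is bounded independently of $\varepsilon$, the unweighted conditions $\vec a\cdot\nabla z\ge\frac12|\nabla z|^2-C$, $|\vec a|\le|\nabla z|+C$ and $|B|\le C$ hold for every $m>1$. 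This is DiBenedetto's general porous medium setting $\partial_t\beta(z)=\nabla\cdot\vec a+B$ with $\beta(s)=s^{1/m}$. The paper quotes \cite[Theorems 1.3 and 1.9]{black2026refining} for the interior and up-to-$t=0$ estimates on $z$, then passes back to $u_\varepsilon=z^{1/m}-\varepsilon$; since $s\mapsto s^{1/m}$ is $\frac1m$-H\"older, this costs at most a factor $1/m$ in the exponent.

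In your language, the fix amounts to testing with $(w^m-k)_\pm$ instead of $(w-k)_\pm$. The drift then contributes $\chi u\nabla v_\varepsilon\cdot\nabla(w^m-k)_\pm\le\frac12|\nabla(w^m-k)_\pm|^2+Cu^2$, which is bounded with no restriction on $m$. With that change, the rest of your outline can proceed along the standard lines for equations of the form $\partial_t\beta(z)=\nabla\cdot\vec a+B$: energy estimates, the intrinsic-scaling alternative, translation invariance in space and in $t\ge t_0$, and the version up to $t=0$.
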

\begin{proof}
   Let \( z(t,x) := (u_\eps(t,x) + \varepsilon)^m \) and $ V(t,x) := \nabla v_\eps(t,x)$. Then  $z\ge \eps^m$ satisfies the equation
\begin{equation*}
(z^{\frac{1}{m}})_t = \Delta z-\chi\nabla\cdot((z^{\frac{1}{m}}-\eps) V) + a(z^{1/m} -\eps) - b(z^{1/m} -\eps)^2,\quad x\in\R^N,\,\, {t>0}.
\end{equation*} 
By assumption,
\(
\|V\|_{L^\infty(\Omega_T)} +
\|u_\varepsilon\|_{L^\infty(\Omega_T)}\le C.
\)
Therefore, the uniform interior H\"older continuity of \(z_\varepsilon\)  and hence of $u_\eps$, follows from \cite[Theorems 1.3]{black2026refining}, which are based on the De Giorgi--Nash--Moser iteration technique. Moreover, if \(u_0\in C^\alpha(\mathbb R^N)\), then by \cite[Theorems 1.9]{black2026refining}
\(
u_\varepsilon\in C^\alpha([0,T]\times\R^N)
\)
uniformly in \(\varepsilon\).
\end{proof}

The next lemma shows that we can control the local $L^{p+2}$ norm or $u_\eps$ by local $L^2$ norm of  $\nabla u_\eps^{\frac{p+m}{2}}$ for $1<m\le 2$. This would be particularly useful in establishing local $L^p$ estimate of the solution in the next section.

\begin{lem}\label{uLp+2}
Suppose that the assumptions of Lemma \ref{v-bound-lm} hold. Let \(1<m\le 2\) and \(\delta>0\). Assume that $1<m \le \frac{2(N-1)}{N}$ and $u_{\eps}\in L^\infty([0,T]; L^{p'+1}_{loc}(\R^N))$ for some $p'+1 > \frac{N(2-m)}{2}$ then for all $p>p'$,
    \begin{align}\label{I4'}
    \int_{\R^N} u_\varepsilon^{p+2}\psi
    \lesssim
    \delta\sup_{x_0\in\R^N}\int_{\R^N}\left|\nabla u_\varepsilon^{\frac{p+m}{2}}\right|^2\psi(x-x_0)
    + C_\delta \kappa^{-N}.
    \end{align}
Also, \eqref{I4'} hold same when \(
    \frac{2(N-1)}{N}<m\le 2,
    \)
     for any \(p>1\).
\end{lem}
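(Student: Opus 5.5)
The plan is to reduce \eqref{I4'} to a Gagliardo--Nirenberg inequality on unit balls, applied to the function $w:=u_\varepsilon^{\frac{p+m}{2}}$, and then to glue the local estimates together with the weight $\psi$ from Lemma \ref{psi-lm}. In terms of $w$ we have
\[
u_\varepsilon^{p+2}=w^{s},\qquad u_\varepsilon^{p'+1}=w^{r},\qquad s:=\frac{2(p+2)}{p+m},\quad r:=\frac{2(p'+1)}{p+m}.
\]
Since $p>p'$, we have $r<s$. Moreover $s\le 2^*$ (recall $s>2$ only when $m<2$). The hypothesis $u_\varepsilon\in L^\infty([0,T];L^{p'+1}_{loc})$ says exactly that $\sup_{t,z}\|w(t)\|_{L^r(B(z,1))}$ is finite.

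\textbf{Step 1: Gagliardo--Nirenberg on a unit ball.} On each ball $B(z,1)$ I apply the inequality
\[
\|w\|_{L^s(B)}\le C\|\nabla w\|_{L^2(B)}^{\theta}\|w\|_{L^r(B)}^{1-\theta}+C\|w\|_{L^r(B)},
\qquad \frac1s=\theta\Big(\frac12-\frac1N\Big)+\frac{1-\theta}{r}.
\]
Raising to the power $s$ and using the uniform $L^r$ bound gives
\[
\int_{B(z,1)}u_\varepsilon^{p+2}\le C\Big(\|\nabla w\|_{L^2(B(z,1))}^{\theta s}+1\Big).
\]
I then need $\theta s<2$, so that Young's inequality yields $\delta\|\nabla w\|_{L^2(B(z,1))}^2+C_\delta$. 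A direct computation shows
\[
\theta s<2\iff \frac{s-2}{r}<\frac{2}{N}\iff \frac{2(2-m)}{2(p'+1)}<\frac2N\iff p'+1>\frac{N(2-m)}{2}.
\]
This is precisely the assumption on $p'$, and it does not depend on $p$, which explains why the conclusion holds for all $p>p'$. The restriction $m\le 2$ guarantees $s\ge 2$, so the exponents are in the admissible range with $\theta\in[0,1]$. In low dimensions the same inequality holds with $2^*$ replaced by any finite exponent.

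\textbf{Step 2: globalization with $\psi$.} I cover $\mathbb R^N$ by the balls $B(z,1)$ with $z\in\mathbb Z^N$. By \eqref{phi2}, and since $\kappa<1$, we have $\psi(x)\le C\psi(z)$ on $B(z,1)$ and $\psi(x-z)\ge c$ there. Hence
\[
\int_{\mathbb R^N}u_\varepsilon^{p+2}\psi\le C\sum_{z}\psi(z)\int_{B(z,1)}u_\varepsilon^{p+2},
\qquad
\int_{B(z,1)}|\nabla w|^2\le C\sup_{x_0}\int_{\mathbb R^N}|\nabla w|^2\psi(x-x_0).
\]
Summing, and using $\sum_z\psi(z)\le C\kappa^{-N}$ (from \eqref{phi3}, after rescaling the lattice), gives
\[
\int u_\varepsilon^{p+2}\psi\le C\kappa^{-N}\Big(\delta\sup_{x_0}\int|\nabla w|^2\psi(\cdot-x_0)+C_\delta\Big).
\]
Since $\kappa$ is fixed, replacing $\delta$ by $\delta\kappa^N/C$ gives \eqref{I4'}.

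\textbf{Step 3: the range $\frac{2(N-1)}{N}<m\le 2$.} Here $\frac{N(2-m)}{2}<1$, so the argument only needs a uniform local $L^{1}$ bound on $u_\varepsilon$, or a local $L^{1+\eta}$ bound with $\eta$ small. I would obtain the local $L^1$ bound directly from the logistic term. Testing the $u_\varepsilon$ equation with $\psi$ and integrating by parts, the diffusion and taxis terms produce contributions bounded by $\kappa\int(u_\varepsilon+\varepsilon)^m\psi$ and $\kappa\int u_\varepsilon|\nabla v_\varepsilon|\psi$. These are to be absorbed into $-b\int u_\varepsilon^2\psi$ together with $\int|\nabla v_\varepsilon|^2\psi\lesssim\int u_\varepsilon^2\psi+C$; the latter comes from the $p=2$ analogue of \eqref{A-eq1}/\eqref{nablav-est}, with $\kappa$ small. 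This yields $\frac{d}{dt}\int u_\varepsilon\psi\le C\kappa^{-N}-c\int u_\varepsilon\psi$, hence a uniform bound. With $p'=0$, Steps 1--2 then apply for every $p>1$.

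I expect the main obstacle to be Step 3. In the taxis term $\int u_\varepsilon\nabla v_\varepsilon\cdot\nabla\psi$, the quantity $v_\varepsilon$ is not a priori bounded, and when $m$ is close to $2$ the porous term $\kappa\int u_\varepsilon^m\psi$ must be dominated by $b\int u_\varepsilon^2\psi$. I would handle both terms by taking $\kappa$ small and using Young's inequality.
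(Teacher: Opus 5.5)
Your Steps 1--2 are essentially the paper's argument: the same Gagliardo--Nirenberg inequality for $w=u_\varepsilon^{\frac{p+m}{2}}$ with the local $L^{p'+1}$ norm as the low norm, and the same exponent condition $\theta s<2\iff p'+1>\frac{N(2-m)}{2}$. Working on unit balls and paying $\sum_{z\in\mathbb Z^N}\psi(z)\lesssim\kappa^{-N}$, instead of the paper's balls $B_{N/\kappa}(z)$, only moves the $\kappa$-dependence into $C_\delta$. This is harmless, since $\kappa$ is fixed before $\delta$ in every application. When $r<1$ (e.g.\ $p'=0$) you need the quasi-norm version of Gagliardo--Nirenberg, which the paper cites.

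\textbf{The gap is in Step 3 for $\tau=1$.} You absorb $|\chi|\kappa\int u_\varepsilon|\nabla v_\varepsilon|\psi$ into $-b\int u_\varepsilon^2\psi$ using a bound $\int|\nabla v_\varepsilon(t)|^2\psi\lesssim\int u_\varepsilon(t)^2\psi+C$ at each fixed time.
\begin{itemize}
\item For $\tau=0$ this is correct: it is \eqref{nablav-est} with $p=2$, because $v_\varepsilon(t)$ is obtained from $u_\varepsilon(t)$ through the resolvent.
\item For $\tau=1$ no such pointwise-in-time inequality holds, because $\nabla v_\varepsilon(t)$ depends on the whole history of $u_\varepsilon$ on $[0,t]$. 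The $p=2$ analogue of \eqref{A-eq1} only gives $\sup_t\|\nabla v_\varepsilon(t)\psi\|_{L^2}\lesssim C_\kappa+\sup_s\|u_\varepsilon(s)\psi\|_{L^2}$.
\item That bound does not match the dissipation $-b\int u_\varepsilon(t)^2\psi$. It is also not available a priori: a sup-in-time local $L^2$ bound on $u_\varepsilon$ is stronger than the $L^1$ bound you are trying to prove.
\end{itemize}
So $\frac{d}{dt}\int u_\varepsilon\psi\le C\kappa^{-N}-c\int u_\varepsilon\psi$ is not justified in the parabolic--parabolic case. The difficulties you flagged, the unboundedness of $v_\varepsilon$ and the term $\kappa^2\int u_\varepsilon^m\psi$, are not the real obstruction.

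\textbf{The fix}, as in the paper, is to compare time integrals with matching exponential weights.
\begin{itemize}
\item Add $2\lambda\int u_\varepsilon\psi$ to both sides, multiply by $e^{2\lambda t}$ and integrate. The bad term becomes $|\chi|\kappa\iint_{\Omega_t}e^{-2\lambda(t-s)}|\nabla v_\varepsilon|^2\psi$.
\item Apply the maximal regularity Lemma \ref{maximal-regularity-lm} with exponent $2$ to $v_\varepsilon\psi^{1/2}$. The commutator terms $\nabla v_\varepsilon\cdot\nabla\psi^{1/2}$ and $v_\varepsilon\Delta\psi^{1/2}$ are absorbed by smallness of $\kappa$. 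This gives \eqref{tau1-psi}.
\item The right-hand side of \eqref{tau1-psi}, $\kappa^{-N}+\mu^2\iint_{\Omega_t}e^{-2\lambda(t-s)}u_\varepsilon^2\psi$, is absorbed by the integrated logistic term once $\kappa$ is small.
\end{itemize}
Alternatively, add a small multiple $\epsilon_1$ of $\int(|\nabla v_\varepsilon|^2+v_\varepsilon^2)\psi$ and use its dissipation, as in the second part of Lemma \ref{l4.7}. Take $\epsilon_1$ small compared with $b$ and then $\kappa\ll\epsilon_1$. With either repair your argument goes through.
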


\begin{proof}[Proof of Lemma \ref{uLp+2}.]
First we show that the local \(L^1\)-norm of \(u_\eps\) is bounded for \(1<m\le 2\). 

Multiplying the \(u_\eps\)-equation by \(\psi\) and integrating by parts, then applying Young's inequality, we obtain
\begin{align}\label{uL1}
\frac{d}{dt}\int_{\R^N} u_\eps\psi + {2\lambda} \int_{\R^N} u_\eps\psi
&\le \int_{\R^N} (u_\eps+\varepsilon)^m |\Delta \psi|
   + |\chi| \int_{\R^N} u_\eps|\nabla v_\eps||\nabla \psi|
   + \int_{\R^N}\big((a+ {2\lambda})u_\eps-bu_\eps^2\big)\psi \nonumber\\
&\le \kappa^2\int_{\R^N} u_\eps^m\psi
   +\varepsilon^m \kappa^2\int_{\R^N}\psi
   + |\chi|\kappa\int_{\R^N} \big(u_\eps^2\psi + |\nabla v_\eps|^2\psi\big)
   +\int_{\R^N}\big((a+ {2\lambda})u_\eps-bu_\eps^2\big)\psi \nonumber\\
&\lesssim C_\delta\kappa^{-N}
   + |\chi|\kappa\int_{\R^N} |\nabla v_\eps|^2\psi
   -\bigl(b-\kappa^2-|\chi|\kappa-\delta\bigr)\int_{\R^N}u_\eps^2\psi.
\end{align}
Multiplying through by \(e^{2\lambda t}\) and integrating in time yields
\begin{align}\label{ul1-psi}
\int_{\R^N} u_\eps(t)\psi
&\lesssim \int_{\R^N} u_0\psi + C_\delta\kappa^{-N}
+ |\chi|\kappa\iint_{\Omega_t} e^{-2\lambda (t-s)}|\nabla v_\eps|^2\psi \nonumber\\
&\qquad
-\bigl(b-\kappa^2-|\chi|\kappa-\delta\bigr)\iint_{\Omega_t} e^{-2\lambda(t-s)}u_\eps^2\psi.
\end{align}

When \(\tau=0\), by \eqref{est-tau0},
\begin{equation}\label{tau0-psi}
\int_{\R^N}|\nabla v_\eps|^2\psi \le C \int_{\R^N}u_\eps^2\psi.
\end{equation}

When \(\tau=1\), for any \(q>1\), let
\[
\psi_1:=\psi_{1,q}:=\psi^{1/q}.
\]
Then multiplying the \(v_\eps\)-equation by \(\psi_1\), and applying Lemma \ref{maximal-regularity-lm} then yields
\begin{align*}
&\int_{0}^t e^{-\lambda q(t-s)}
\Big(\|\Delta(v_\eps\psi_1)\|_{L^q(\R^N)}^{q}
+ \|\nabla(v_\eps\psi_1)\|_{L^q(\R^N)}^{q}
+\| v_\eps\psi_1\|_{L^q(\R^N)}^{q}\Big)\,ds \\
&\le C_{q,N}\int_{0}^t \int_{\R^N} e^{-\lambda q(t-s)}
\big|-2\nabla v_\eps\cdot \nabla\psi_1 - v_\eps\Delta\psi_1+\mu u_\eps \psi_1\big|^{q} \, dx\,ds
+ C_{q,N}\, t e^{-\lambda qt} \|v_0\psi_1\|_{W^{1,q}(\R^N)}^q \\
&\lesssim C_{q,N}\|v_0\|_{W^{1,\infty}(\R^N)}^q \|\psi\|_{L^1(\R^N)}
+ C_{q,N}\int_{0}^t \int_{\R^N} e^{-\lambda q(t-s)}
\big[ \kappa^q (|v_\eps\psi_1|^q+|\nabla (v_\eps \psi_1 )|^q) + \mu^q|u_\eps \psi_1|^q \big]\,dx\,ds.
\end{align*}
Here we used that, 
\(
|\nabla \psi_1|\lesssim \kappa \psi_1\) and \(
|\Delta \psi_1|\lesssim \kappa^2 \psi_1
\)
from Lemma \ref{psi-lm}. Therefore, if \(\kappa\) is sufficiently small depending only on \(q\) and \(N\), then
\begin{equation}\label{max-ine-psi}
\int_{0}^t e^{-\lambda q(t-s)}
\Big(\|\Delta(v_\eps\psi_1)\|_{L^q(\R^N)}^{q}
+ \|\nabla(v_\eps\psi_1)\|_{L^q(\R^N)}^{q}
+\| v_\eps\psi_1\|_{L^q(\R^N)}^{q}\Big)\,ds
\lesssim \kappa^{-N}+ \mu^q\int_{0}^t\int_{\R^N} e^{-\lambda q(t-s)} |u_\eps \psi_1|^q.
\end{equation}
Choosing \(q=2\), we can get that
\begin{equation}\label{tau1-psi}
\iint_{\Omega_t} e^{-2\lambda(t-s)}|\nabla v_\eps|^2\psi
\lesssim \kappa^{-N} + \mu^2\iint_{\Omega_t} e^{- 2\lambda(t-s)}u_\eps^2\psi.
\end{equation}

Finally, \eqref{tau0-psi} and \eqref{tau1-psi}, together with \eqref{ul1-psi}, imply that after choosing \(\delta\) and \(\kappa\) sufficiently small, there exists \(C>0\), independent of \(T\) and \(\varepsilon\), such that for all \(t\in[0,T]\),
\begin{equation}\label{uL1-psi}
\int_{\R^N} u_\eps(t)\psi \le C.
\end{equation}
Next by the properties of \(\psi\) in Lemma \ref{psi-lm}, and a change of variable,
\begin{align}\label{up+2}
    \int_{\R^N} u_\eps^{p+2}\psi \lesssim \sum_{\k z\in \Z^N}\psi(z)\int_{B_{N/\kappa}(z)} u_\eps^{p+2}(\cdot, x)\, dx =\sum_{\k z\in \Z^N}\psi(z)\Big(\frac{N}{\k}\Big)^N\int_{B_{1}} u_\eps^{p+2}(\cdot, z+yN/\kappa)\, dy .
\end{align}
Let
\[
w:=u_\eps^{\frac{p+m}{2}},\qquad 
r:=\frac{2(p+2)}{p+m}.
\]
By Gagliardo-Nirenberg inequality (see \cite{nirenberg1966extended} and Lemma 2.4 in \cite{wang2014quasilinear}), and then a change a variable again, there exists \(C>0\) independent of \(\k\) such that
\begin{align*}
\int_{B_{N/\kappa}} u_\eps^{p+2}(\cdot, x)\, dx = \|w\|^r_{L^r(B_{N/\kappa})}
&\le C\,\|\nabla w\|_{L^2(B_{N/\kappa})}^{\theta r}\,\|w\|_{L^q(B_{N/\kappa})}^{(1-\theta)r}
+ C\,\|w\|^r_{L^q(B_{N/\kappa})},
\end{align*}
where 
\[
\theta=\frac{
\frac{1}{q}-\frac{1}{r}
}{
\frac1N-\frac 12+\frac{1}{q}
}.
\]
Now suppose \(1<m \le \frac{2(N-1)}{N}\) and $u_{\eps}\in L^\infty([0,T]; L^{p'+1}_{loc}(\R^N))$ for some \(p'+1 > \frac{N(2-m)}{2}\), take \(q:=\frac{2(p'+1)}{p+m}\). Then for any $p>p'$
\[
\theta=\frac{
\frac{(m+p)N}{2(p'+1)}-\frac{(m+p)N}{2(p+2)}
}{
1-\frac N2+\frac{(m+p)N}{2(p'+1)}
}
=
\frac{N(m+p)(p-p'+1)}{(p+2)(N(m+p)+(p'+1)(2-N))},
\]
and 
\[
\theta r=\frac{2N(p-p'+1)}{N(p+m)-(N-2)(p'+1)} \in(0, 2).
\]
Since \(\|w\|_{L^q(B_{N/\kappa})} = \|u_\eps\|_{L^{p'+1}(B_{N/\kappa})}\lesssim 1\), then by Young's inequality,
\begin{align}\label{lem3.2in1}
\int_{B_{N/\kappa}} u_\eps^{p+2} 
&\lesssim \|\nabla w\|_{L^2(B_{N/\kappa})}^{\theta r} + 1\nonumber\\
&\lesssim \delta\|\nabla w\|^2_{L^2(B_{N/\kappa})} + C_\delta.
\end{align}
Similarly, when \(\frac{2(N-1)}{N} <m\le 2\), take \(q =\frac{2}{p+m}\), then \(\|w\|_{L^q(B_{N/\kappa})} = \|u_\eps\|_{L^1(B_{N/\kappa})}\lesssim 1\), by \eqref{uL1-psi}. In this case,
\[
\theta=\frac{N(p+m)(p+1)}{(p+2)\big(N(p+m)-(N-2)\big)} \implies \theta r=\frac{2N(p+1)}{N(p+m)-(N-2)}.
\]
Note that \(\theta r \in(0, 2)\) whenever \(m>\frac{2(N-1)}{N}\). Thus by Young's inequality again, equation \eqref{lem3.2in1} holds as well. Now substituting \eqref{lem3.2in1} into \eqref{up+2} with the fact that $\psi(x-z) =1$ for $x\in B_{N/\k}$, gives
\begin{align*}
    \int_{\R^N} u_\eps^{p+2}\psi^2 &\le \sum_{\k z\in \Z^N}\psi(z)\left[\delta\int_{B_{N/\kappa}} |\nabla u_\eps^{\frac{p+m}{2}}|\psi^2(x-z) + C_\delta\right]\nonumber\\
    &\lesssim \delta\sup_{x_0\in\R^N}\int_{\R^N} |\nabla u_\eps^{\frac{p+m}{2}}|^2\psi^2(x-x_0) + C_\delta\kappa^{-N}.
\end{align*}
This completes the proof of the lemma.
\end{proof}

The next lemma provides an estimate involving \(\nabla v_{\eps}\) that will be useful in controlling the \(L^p\)-bound of the solution in the parabolic--parabolic case.

\begin{lem}\label{l4.7}
Suppose that the assumptions of Lemma \ref{v-bound-lm} hold with \(\tau =1\). Let \(r>1\). Then
\begin{align}\label{nablavr}
        &\frac{1}{r}\frac{d}{dt}\int_{\R^N}|\nabla v_\eps|^{2r}\psi + C\int_{\R^N}|\nabla v_\eps|^{2r-4}|\nabla |\nabla v_\eps|^2|^2\psi + C\int_{\R^N}|\nabla v_\eps|^{2r}\psi+ \int_{\R^N}|D^2v_\eps|^2|\nabla v_\eps|^{2r-2}\psi\nonumber\\
       & \qquad\qquad \le C \int_{\R^N}u_\eps^2|\nabla v_\eps|^{2r-2}\psi.
\end{align}
Also, for \(1<m\le 2\), we have
\[
\nabla v_\eps\in L^\infty([0,T]; L^2_{loc}(\R^N)).
\]
\end{lem}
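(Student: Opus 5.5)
The plan is to differentiate $w:=|\nabla v_\eps|^2$ along the flow, using the Bochner identity for the $v_\eps$-equation with $\tau=1$. Writing $v=v_\eps$, $u=u_\eps$ and differentiating $v_t=\Delta v-\lambda v+\mu u$ gives
\[
\tfrac12 w_t=\nabla v\cdot\nabla\Delta v-\lambda w+\mu\nabla u\cdot\nabla v,
\qquad
\nabla v\cdot\nabla\Delta v=\tfrac12\Delta w-|D^2v|^2 .
\]
Multiplying by $w^{r-1}\psi$ and integrating over $\R^N$ (legitimate since $v$ is smooth with bounded derivatives and $\psi$ decays exponentially) yields
\[
\frac{1}{2r}\frac{d}{dt}\int w^r\psi=\frac12\int w^{r-1}\psi\,\Delta w-\int|D^2v|^2w^{r-1}\psi-\lambda\int w^r\psi+\mu\int w^{r-1}\psi\,\nabla u\cdot\nabla v .
\]

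\textbf{Diffusion term.} Integrating by parts,
\[
\tfrac12\int w^{r-1}\psi\Delta w=-\tfrac{r-1}{2}\int w^{r-2}|\nabla w|^2\psi-\tfrac12\int w^{r-1}\nabla w\cdot\nabla\psi .
\]
By $|\nabla\psi|\le\kappa\psi$ and Young, the last piece is bounded by $\eta\int w^{r-2}|\nabla w|^2\psi+C_\eta\kappa^2\int w^r\psi$. For $\kappa$ small this is absorbed into the good term $-\lambda\int w^r\psi$ and the gradient term.

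\textbf{Production term.} This is the main obstacle, since we may not keep $\nabla u$. Integrating by parts,
\[
\mu\int w^{r-1}\psi\,\nabla u\cdot\nabla v=-\mu\int u\,w^{r-1}\psi\,\Delta v-\mu(r-1)\int u\,w^{r-2}\psi\,\nabla w\cdot\nabla v-\mu\int u\,w^{r-1}\nabla v\cdot\nabla\psi .
\]
The three pieces are handled as follows.
\begin{itemize}
\item[(i)] Since $|\Delta v|^2\le N|D^2v|^2$, Young gives a bound $\tfrac12\int|D^2v|^2w^{r-1}\psi+C\int u^2w^{r-1}\psi$.
\item[(ii)] Since $|\nabla w\cdot\nabla v|\le|\nabla w|\,w^{1/2}$, Young gives a bound $\eta\int w^{r-2}|\nabla w|^2\psi+C_\eta\int u^2w^{r-1}\psi$.
\item[(iii)] This is bounded by $\kappa\int u\,w^{r-1/2}\psi\le\kappa\int w^r\psi+\kappa\int u^2w^{r-1}\psi$.
\end{itemize}
Choosing $\eta$ and $\kappa$ small, absorbing, and multiplying by $2$ gives \eqref{nablavr} with $C=\min\{\lambda,(r-1)/2\}$-type constants (note $r>1$ is needed for the gradient term).

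\textbf{Second claim.} For $1<m\le2$ and $r=1$ the computation is simpler: there is no $w^{r-2}$ term, and (i) and (iii) give
\[
\frac{d}{dt}\int|\nabla v|^2\psi+\lambda\int|\nabla v|^2\psi\le C\int u^2\psi .
\]
Multiplying by $e^{\lambda t}$ and integrating gives
\[
\int|\nabla v(t)|^2\psi\le e^{-\lambda t}\|\nabla v_0\|_\infty^2\int\psi+C\int_0^te^{-\lambda(t-s)}\int u^2\psi .
\]
The remaining task is a bound on $\iint_{\Omega_t}e^{-\lambda(t-s)}u^2\psi$ uniform in $t$. I would get it from the $L^1$ computation in the proof of Lemma \ref{uLp+2}: in \eqref{ul1-psi} the dissipative term $-(b-\kappa^2-|\chi|\kappa-\delta)\iint e^{-2\lambda(t-s)}u^2\psi$ dominates, by \eqref{tau1-psi}, the $\nabla v$ contribution for $\kappa$ small. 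This gives $\iint e^{-2\lambda(t-s)}u^2\psi\le C$. One must match the exponential weights, redoing \eqref{uL1} with $\lambda$ in place of $2\lambda$ if needed. Translating $\psi$ by $x_0$ then gives the local $L^2$ bound uniformly in $x_0$ and $t\in[0,T]$.
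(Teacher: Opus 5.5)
Your derivation of \eqref{nablavr} is the same as the paper's: the Bochner identity for $|\nabla v_\eps|^2$, testing with $|\nabla v_\eps|^{2r-2}\psi$, moving the derivative off $u_\eps$, and the same three Young estimates.

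For the local $L^2$ bound on $\nabla v_\eps$ you take a different route, and it is valid. The paper tests the $v_\eps$-equation with $-2\Delta v_\eps\psi$ and with $2v_\eps\psi$. This gives a differential inequality for $\int|\nabla v_\eps|^2\psi+\int v_\eps^2\psi$, forced by $\int u_\eps^2\psi$. The paper then adds the differential $L^1$ inequality \eqref{uL1}, whose logistic term $-\int u_\eps^2\psi$ cancels that forcing, and closes with Gronwall for $\int|\nabla v_\eps|^2\psi+\int v_\eps^2\psi+\int u_\eps\psi$.

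You instead use the case $r=1$ of the Bochner computation. It produces no $v_\eps\nabla v_\eps\cdot\nabla\psi$ term, so no auxiliary $\int v_\eps^2\psi$ is needed. It also sidesteps the term $-2\int \partial_t v_\eps\,\nabla v_\eps\cdot\nabla\psi$ that the paper's multiplier computation omits, harmlessly. You then apply Duhamel and read off a weighted space--time bound on $u_\eps^2$ from the dissipative term of the already-integrated estimate \eqref{ul1-psi}, with \eqref{tau1-psi} absorbing the $\nabla v_\eps$ contribution.

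What each buys: the paper's argument is purely differential and needs no maximal regularity at this step. However, its cancellation tacitly requires multiplying \eqref{uL1} by a constant $M\gtrsim \mu^2(1+\lambda^{-1})/b$ and then taking $\kappa$ small relative to $M$; the paper's $\lesssim$ hides this. In your route the absorption comes for free from the small prefactor $|\chi|\kappa$, so no constants need to be matched.

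Two small points.

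First, the exponential-weight mismatch you flag needs no rework of \eqref{uL1}. Since \eqref{ul1-psi} and \eqref{tau1-psi} give $\iint_{\Omega_t}e^{-2\lambda(t-s)}u_\eps^2\psi\le C$ for every $t$, with $C$ independent of $t$, you get $\int_{(t-1)^+}^{t}\int_{\R^N}u_\eps^2\psi\le Ce^{2\lambda}$. Summing over unit time windows then yields $\int_0^t e^{-\lambda(t-s)}\int_{\R^N}u_\eps^2\psi\,ds\le Ce^{2\lambda}/(1-e^{-\lambda})$. Rerunning \eqref{uL1} at rate $\lambda$ would instead also require \eqref{tau1-psi} with weight $e^{-\lambda(t-s)}$, which Lemma \ref{maximal-regularity-lm} as stated does not directly provide.

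Second, at $r=1$ the cross term $-\tfrac12\int\nabla w\cdot\nabla\psi$ is not covered by your general Young step, because the good term $(r-1)\int w^{r-2}|\nabla w|^2\psi$ vanishes. Integrate by parts once more to get $\tfrac12\int w\,\Delta\psi\le\tfrac{\kappa^2}{2}\int w\psi$.
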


\begin{proof}
The constants \(c\) and \(C\) below are generic, but independent of \(\eps\), \(\kappa\), and \(T\).

From the second equation in \eqref{main-perturbed-eq}, we have
\begin{align*}
    (|\nabla v_\eps|^2)_t
    =2\nabla v_\eps\cdot\nabla (v_\eps)_t
    &=2\nabla v_\eps\cdot \nabla \Delta v_\eps-2\lambda|\nabla v_\eps|^2 + 2\mu\nabla v_\eps\cdot \nabla u_\eps\\
    &=\Delta |\nabla v_\eps|^2-2|D^2v_\eps|^2-2\lambda|\nabla v_\eps|^2 + 2\mu\nabla v_\eps\cdot \nabla u_\eps.
\end{align*}
Upon multiplication by \((|\nabla v_\eps|^2)^{r-1}\psi\) and integration, this leads to
\begin{align}\label{vest1}
        &\frac{1}{r}\frac{d}{dt}\int_{\R^N}|\nabla v_\eps|^{2r}\psi\nonumber\\
        &\qquad= \int_{\R^N}\Delta |\nabla v_\eps|^2|\nabla v_\eps|^{2r-2}\psi-2\int_{\R^N}|D^2v_\eps|^2|\nabla v_\eps|^{2r-2}\psi-2\lambda\int_{\R^N}|\nabla v_\eps|^{2r}\psi \nonumber\\
        &\qquad\quad + 2\mu\int_{\R^N}|\nabla v_\eps|^{2r-2}\psi\nabla v_\eps\cdot \nabla u_\eps\nonumber\\
        &\qquad= -(r-1)\int_{\R^N}|\nabla v_\eps|^{2r-4}|\nabla |\nabla v_\eps|^2|^2\psi-\int_{\R^N}|\nabla v_\eps|^{2r-2}\nabla|\nabla v_\eps|^2\cdot\nabla\psi \nonumber\\
        &\qquad\quad -2\int_{\R^N}|D^2v_\eps|^2|\nabla v_\eps|^{2r-2}\psi-2\lambda\int_{\R^N}|\nabla v_\eps|^{2r}\psi - 2\mu(r-1)\int_{\R^N}u_\eps |\nabla v_\eps|^{2r-4}\nabla|\nabla v_\eps|^2\cdot\nabla v_\eps\psi \nonumber\\
        &\qquad\quad - 2\mu\int_{\R^N} u_\eps |\nabla v_\eps|^{2r-2}\Delta v_\eps\psi
        - 2\mu\int_{\R^N} u_\eps |\nabla v_\eps|^{2r-2}\nabla v_\eps\cdot \nabla \psi.
\end{align}
Then by Young's inequality, we have the following estimates:
\[
|\nabla v_\eps|^{2r-2}\nabla|\nabla v_\eps|^2\cdot\nabla\psi
\le \frac{\kappa}{2}|\nabla|\nabla v_\eps|^2|^2|\nabla v_\eps|^{2r-4}\psi + \frac{\kappa}{2}|\nabla v_\eps|^{2r}\psi,
\]
\[
2\mu(r-1)u_\eps|\nabla v_\eps|^{2r-4}\nabla|\nabla v_\eps|^2\cdot\nabla v_\eps\psi
\le\frac{r-1}{8}|\nabla|\nabla v_\eps|^2|^2|\nabla v_\eps|^{2r-4}\psi + 8(r-1)\mu^2u_\eps^2|\nabla v_\eps|^{2r-2}\psi,
\]
\[
2\mu u_\eps|\nabla v_\eps|^{2r-2}\Delta v_\eps\psi
\le |D^2v_\eps|^2|\nabla v_\eps|^{2r-2}\psi + N\mu^2 u_\eps^2|\nabla v_\eps|^{2r-2}\psi,
\]
\[
2\mu u_\eps|\nabla v_\eps|^{2r-2}\nabla v_\eps\cdot \nabla \psi
\le \frac{\kappa}{2}|\nabla v_\eps|^{2r}\psi + \frac{\kappa}{2}\mu^2 u_\eps^2|\nabla v_\eps|^{2r-2}\psi.
\]
Plugging these inequalities into \eqref{vest1}, we obtain, for \(\kappa\) sufficiently small,
\begin{align*}
        &\frac{1}{r}\frac{d}{dt}\int_{\R^N}|\nabla v_\eps|^{2r}\psi + C\int_{\R^N}|\nabla v_\eps|^{2r-4}|\nabla |\nabla v_\eps|^2|^2\psi + C\int_{\R^N}|\nabla v_\eps|^{2r}\psi+ \int_{\R^N}|D^2v_\eps|^2|\nabla v_\eps|^{2r-2}\psi\\
       & \qquad\qquad \le C \int_{\R^N}u_\eps^2|\nabla v_\eps|^{2r-2}\psi.
\end{align*}

Now we establish the second part of the lemma. Multiplying the \(v_\eps\)-equation by \(-2\Delta v_\eps\psi\), and then integrating both sides, gives
\begin{align*}
    \frac{d}{dt}\int_{\R^N}|\nabla v_\eps|^2\psi
    &= -2\int_{\R^N}\Delta v_\eps(\Delta v_\eps -\lambda v_\eps + \mu u_\eps)\psi\\
    & = -2\int_{\R^N}|\Delta v_\eps|^2\psi -2\lambda\int_{\R^N}|\nabla v_\eps|^2\psi-2\lambda \int_{\R^N}v_\eps\nabla v_\eps\cdot \nabla \psi-2\mu \int_{\R^N} \Delta v_\eps u_\eps \psi \\
    &\le -\int_{\R^N}|\Delta v_\eps|^2\psi -\lambda\int_{\R^N}|\nabla v_\eps|^2\psi + \mu^2\int_{\R^N} u_\eps^2\psi+\kappa\lambda\int_{\R^N} v_\eps^2\psi.
\end{align*}
Also, multiplying the \(v_\eps\)-equation by \(2v_\eps\psi\) and then integrating gives
\begin{align*}
    \frac{d}{dt}\int_{\R^N}v_\eps^2\psi
    &= \int_{\R^N} 2v_\eps(\Delta v_\eps -\lambda v_\eps + \mu u_\eps)\psi\\
    & = -2\int_{\R^N}|\nabla v_\eps|^2\psi -2\int_{\R^N} v_\eps\nabla v_\eps\cdot\nabla \psi -2\lambda\int_{\R^N}v_\eps^2\psi+2\mu \int_{\R^N} v_\eps u_\eps \psi \\
    &\le -\int_{\R^N}|\nabla v_\eps|^2\psi -(\lambda-\kappa)\int_{\R^N}v_\eps^2\psi + \frac{\mu^2}{\lambda}\int_{\R^N} u_\eps^2\psi.
\end{align*}
Combining these two estimates gives
\begin{align}\label{vnablav-est}
    &\frac{d}{dt}\left(\int_{\R^N}|\nabla v_\eps|^2\psi + \int_{\R^N}|v_\eps|^2\psi \right)+ \int_{\R^N}|\nabla v_\eps|^2\psi + \int_{\R^N}|v_\eps|^2\psi\lesssim  \kappa\int_{\R^N}|v_\eps|^2\psi +c\int_{\R^N}u_\eps^2\psi.
\end{align}
Recall that from \eqref{uL1}, we have that for \(1<m\le 2\),
\begin{align*}
    \frac{d}{dt}\int_{\R^N} u_\eps\psi^2 + \int_{\R^N} u_\eps\psi^2 \lesssim C_\delta\kappa^{-N} + \kappa\int_{\R^N}  |\nabla v_\eps|^2\psi^2 -c\int_{\R^N}u_\eps^2\psi^2.
\end{align*}
Adding \eqref{vnablav-est} to this inequality then implies that for \(1<m\le 2\),
\begin{align*}
    &\frac{d}{dt}\left(\int_{\R^N}|\nabla v_\eps|^2\psi + \int_{\R^N}|v_\eps|^2\psi +  \int_{\R^N} u_\eps\psi\right) +  \int_{\R^N}|\nabla v_\eps|^2\psi +\int_{\R^N}|v_\eps|^2\psi+ \int_{\R^N} u_\eps\psi\nonumber\\
    &\lesssim  C_\delta \kappa^{-N} + \kappa \int_{\R^N}|v_\eps|^2\psi + \kappa\int_{\R^N}  |\nabla v_\eps|^2\psi -c\int_{\R^N}u_\eps^2\psi.
\end{align*}
Choosing \(\kappa\) small enough, we have for some \(C>0\),
\begin{align}
    \frac{d}{dt}\left(\int_{\R^N}|\nabla v_\eps|^2\psi + \int_{\R^N}|v_\eps|^2\psi +  \int_{\R^N} u_\eps\psi\right) +  \int_{\R^N}|\nabla v_\eps|^2\psi +\int_{\R^N}|v_\eps|^2\psi+ \int_{\R^N} u_\eps\psi\lesssim  C.
\end{align}
Hence
\[
u_\eps\in L^1_{loc}(\R^N),\qquad v_\eps\in L^2_{loc}(\R^N),\qquad \nabla v_\eps\in L^2_{loc}(\R^N)
\]
for \(1<m\le 2\) and for all \(t\).
\end{proof}

\section{Uniform Local $L^{p+1}$ Estimates}
In this section, we prove a uniform local \(L^p\) bound for solutions of the perturbed problem, independent of \(T\) and \(\varepsilon\). The lemmas established in this section will be used to prove Theorems 1.1 and 1.2.

\begin{lem}[Local $L^{p}$ estimate]\label{Lp-prior}
\label{apriori-prop}
Suppose that the assumptions of Lemma \ref{v-bound-lm} hold. For \(p>1\), assume that
\begin{equation*}
    b> b_{m, p} := \begin{cases}
        \left(\frac{p-1}{p}\left(C_{p+1,N}\right)^{\frac{1}{p+1}}\right)|\chi|\mu, \quad & 1<m\le \frac{2N}{N+2},\\[1mm]
        0, & m> \frac{2N}{N+2},
    \end{cases}
\end{equation*}
when \(\tau=1\), and
\begin{equation*}
    b> b_{m, p} := \begin{cases}
        \frac{|\chi|\mu (p-1)}{p}, \quad & 1<m\le 2-\frac2N,\\[1mm]
        0, & m> 2-\frac2N,
    \end{cases}
\end{equation*}
when \(\tau = 0\). Then, for \(t\in[0,T]\), $p>1$,
\begin{equation*}
u_\eps(t,\cdot)\in L_{\rm loc}^{p}(\R^N),
\end{equation*}
with a bound depending only on \(m,|\chi|,a,b,N,p,\|u_0\|_\infty,\lambda,\mu,\|\tau v_0\|_{W^{1,\infty}}\), and the diameter of the local spatial domain, but independent of \(T\) and \(\eps\). Here \(C_{p+1,N}\) is the constant appearing in the maximal regularity lemma, Lemma \ref{maximal-regularity-lm}.
\end{lem}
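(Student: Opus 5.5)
We will run a weighted energy estimate for $\int_{\R^N}u_\eps^p\psi$, where $\psi=\psi_\kappa$ is the decay weight of Lemma \ref{psi-lm} and $\kappa$ will be fixed small at the end. The estimate is then made uniform in the centre by replacing $\psi$ with $\psi(\cdot-x_0)$ and using \eqref{phi2}--\eqref{phi3}.

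\textbf{Basic differential inequality.} Multiply the $u_\eps$-equation by $p u_\eps^{p-1}\psi$ and integrate by parts. The diffusion term produces
\[
-c\int_{\R^N}\bigl|\nabla u_\eps^{\frac{p+m-1}{2}}\bigr|^2\psi ,
\]
up to $\eps$-corrections that are harmless because $(\eps+u)^{m-1}\ge u^{m-1}$, plus a cutoff error of size $\kappa\int u_\eps^{p+m-1}\psi$.

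For the chemotactic term I would integrate by parts once more, so that it becomes
\[
\chi(p-1)\int u_\eps^p\Delta v_\eps\,\psi+\kappa\text{-terms},
\]
and then substitute the $v$-equation.

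When $\tau=0$ we have $\Delta v_\eps=\lambda v_\eps-\mu u_\eps$. Hence the chemotactic contribution is bounded by
\[
|\chi|\mu(p-1)\int u_\eps^{p+1}\psi+|\chi|\lambda(p-1)\int u_\eps^p v_\eps\psi .
\]
The second integral is handled by Young's inequality together with \eqref{nablav-est} at the exponent $p+1$. The logistic term gives $-pb\int u_\eps^{p+1}\psi$. Therefore, if $b>|\chi|\mu(p-1)/p$, the $u^{p+1}$ terms absorb everything and leave
\[
\frac{d}{dt}\int u_\eps^p\psi+\int u_\eps^p\psi\le C\kappa^{-N}.
\]
The cutoff error $\kappa\int u^{p+m-1}$ and the $u^p v$ cross term need care here: the former is absorbed by $u^{p+1}$ when $m\le2$, and by the gradient term through Lemma \ref{I1-lm} otherwise.

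When $m>2-\frac2N$ I would avoid using the damping. Instead, the $u^{p+1}$ term is absorbed by the gradient term via a Gagliardo--Nirenberg interpolation against the local $L^1$ bound \eqref{uL1-psi}, exactly as in Lemma \ref{uLp+2} with $p+1$ in place of $p+2$. The condition $\theta r<2$ for this interpolation is precisely $m>2-\frac2N$. Consequently any $b>0$ suffices in that range.

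\textbf{Case $\tau=1$.} Here $\Delta v_\eps$ cannot be substituted, so I would write
\[
\chi(p-1)\int u_\eps^p\Delta v_\eps\psi\le \frac{p-1}{p+1}\,\eta^{-p}\!\int |\Delta v_\eps|^{p+1}\psi+\frac{p(p-1)}{p+1}\,\eta\!\int u_\eps^{p+1}\psi .
\]
Next, multiply by $e^{\lambda(p+1)t}$, integrate in time, and apply the localized maximal regularity bound \eqref{max-ine-psi} with $q=p+1$. This gives
\[
\int_0^t\!\!\int e^{-\lambda(p+1)(t-s)}|\Delta v_\eps|^{p+1}\psi\le C\kappa^{-N}+C_{p+1,N}\,\mu^{p+1}\int_0^t\!\!\int e^{-\lambda(p+1)(t-s)}u_\eps^{p+1}\psi .
\]
Optimising in $\eta$ turns the total chemotactic cost into $(p-1)\,C_{p+1,N}^{1/(p+1)}|\chi|\mu\int\!\!\int u^{p+1}$. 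This is dominated by $pb\int\!\!\int u^{p+1}$ exactly when $b>b_{m,p}$. The outcome is a time-integrated, exponentially weighted bound which yields $\sup_t\int u_\eps^p\psi\le C$.

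For $m>\frac{2N}{N+2}$ I would instead use Lemma \ref{l4.7} (with $r$ chosen appropriately) coupled to the $u^p$ energy, in the spirit of Zhang--Li. The term $\int u^2|\nabla v|^{2r-2}\psi$ and the chemotactic term $\int u^{p+m-1}|\nabla v|^2$-type expressions are absorbed by the diffusion gradient and $\int|\nabla|\nabla v|^2|^2|\nabla v|^{2r-4}$ via Lemma \ref{I1-lm} and Lemma \ref{uLp+2}. The latter uses the $L^1$ bound on $u$ and the $L^2$ bound on $\nabla v$ from Lemma \ref{l4.7}. The exponent bookkeeping in Gagliardo--Nirenberg closes precisely when $m>\frac{2N}{N+2}$.

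\textbf{Main obstacle.} This last regime is the delicate step: no damping is available, $v$ is unbounded, and the interpolation exponents must be matched simultaneously for $u$ and $\nabla v$. I would first try to bootstrap, starting from $L^1$ and $L^2(\nabla v)$ and raising $p$ stepwise as Lemma \ref{uLp+2} permits, ending with a Gronwall-type ODE $y'+y\le C\kappa^{-N}$. Throughout, $\kappa$ is fixed small only after all absorptions are done, so that the constants are independent of $T$ and $\eps$.
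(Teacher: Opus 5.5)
For $1<m\le 2$ your outline largely follows the paper. It uses the $\psi$-weighted energy and, for $\tau=1$, $m\le\frac{2N}{N+2}$, integration by parts of the taxis term, Young with a free parameter and the localized maximal regularity bound \eqref{max-ine-psi}; your optimization reproduces $b_{m,p}$ exactly. For $\tau=0$ it substitutes $\Delta v_\eps=\lambda v_\eps-\mu u_\eps$, and for $2-\frac2N<m\le 2$ applies Lemma \ref{uLp+2} against the local $L^1$ bound \eqref{uL1-psi}. For $\tau=1$, $\frac{2N}{N+2}<m\le 2$, it couples with the $|\nabla v_\eps|^{2q}$-energy of Lemma \ref{l4.7}.

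\textbf{The gap: the regime $m>2$.} There the cutoff error, of order $\kappa^2\int u_\eps^{p+m-1}\psi$ in your normalization, cannot be absorbed by the logistic term. The tools you invoke for large $m$ are also unavailable. The local $L^1$ bound \eqref{uL1-psi}, hence Lemma \ref{uLp+2} and the $L^2_{\rm loc}$ bound on $\nabla v_\eps$ in Lemma \ref{l4.7}, hold only for $m\le 2$. The reason is that the term $\kappa^2\int u_\eps^m\psi$ in \eqref{uL1} is not dominated by $b\int u_\eps^2\psi$ once $m>2$. Absorbing the cutoff error ``by the gradient term through Lemma \ref{I1-lm}'' does not close either. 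That lemma leaves the remainder $C\kappa^{2+2\theta_r}\delta^{-\theta_r}\bigl[\int u_\eps^{r+1}\psi^{q_r}\bigr]^{2/q_r}$ with $2/q_r=\frac{2r}{r+1}>1$. This is a superlinear power of the very local norm you are estimating, so no linear inequality $y'+y\le C\kappa^{-N}$ can come out of it.

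\textbf{The missing idea: the paper's continuity argument.} Work in the paper's normalization. First let the logistic term absorb all $\int u_\eps^{p+2}\psi^2$ contributions, including those produced from the $\nabla v_\eps$-terms by \eqref{max-ine-psi} (resp.\ \eqref{nablav-est}). The only obstruction left is $\kappa^2\iint_{\Omega_t}e^{-\lambda(p+1)(t-s)}u_\eps^{p+m}\psi^2$. Apply Lemma \ref{I1-lm} with $r=\frac{p+m}{2}$ and set $Y(t)=\sup_{s\le t}\sup_{x_0}\int u_\eps^{p+1}(s,x)\psi^2(x-x_0)\,dx$. For $p\ge m$ this gives
\[
Y(t_0)\le C_0\,\kappa^{2+\theta^*}\,Y(t_0)^{\frac{2(p+m)}{p+m+2}}+C_0\,\kappa^{-N},\qquad \theta^*=\frac{N(p+m-2)}{p+m+2},
\]
with $C_0$ independent of $\kappa$, $t_0$, $\eps$ and $T$. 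Since $2+\theta^*-\frac{2N(p+m)}{p+m+2}=2-N$, at a first time with $Y(t_0)=2C_0\kappa^{-N}$ one would get $1\le (2C_0)^{\frac{2(p+m)}{p+m+2}}\kappa^2$. This is false once $\kappa$ is fixed small in terms of $C_0$. As $Y(0)\lesssim\kappa^{-N}$ and $Y$ is continuous, $Y\le 2C_0\kappa^{-N}$ on $[0,T]$. Without this step your proof covers only $1<m\le 2$.

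\textbf{Two smaller corrections.}

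First, for $\tau=0$, estimating $|\chi|\lambda(p-1)\int u_\eps^p v_\eps\psi$ by Young and \eqref{nablav-est} yields a multiple of $\int u_\eps^{p+1}\psi$ comparable to the main term, because the constant in \eqref{nablav-est} is not small. This spoils the threshold $b>\frac{|\chi|\mu(p-1)}{p}$. Use the sign instead: the taxis term is $\chi(p-1)\int u_\eps^p(\mu u_\eps-\lambda v_\eps)\psi$ up to $O(\kappa)$ errors. For $\chi>0$ the $v_\eps$-part is nonpositive and is simply dropped, as in the paper. For $\chi<0$ the $u_\eps^{p+1}$-part has the good sign and, by the contraction property of $\lambda(\lambda I-\Delta)^{-1}$ on $L^{p+1}$, compensates the $v_\eps$-part up to $O(\kappa)$ errors.

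Second, for $\tau=1$, $\frac{2N}{N+2}<m\le 2$, do not discard the logistic term. The paper uses it (any $b>0$) to absorb every $\delta\int u_\eps^{p+2}\psi^2$ term, so Gagliardo--Nirenberg is needed only for the $\nabla v_\eps$-terms, against the $L^2_{\rm loc}$ bound of Lemma \ref{l4.7}. Your bootstrap via Lemma \ref{uLp+2} starting from the $L^1$ bound cannot start when $\frac{2N}{N+2}<m\le 2-\frac2N$. That lemma then requires a prior $L^{p'+1}_{\rm loc}$ bound with $p'+1>\frac{N(2-m)}{2}\ge 1$.
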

As a corollary of the above local $L^{p}$ estimates, we have the following on the existence of a globally defined bounded non-negative classical  solutions to \eqref{main-perturbed-eq}. 
\begin{prop}[Classical solutions of \eqref{main-perturbed-eq}]
\label{main-perturbed-thm}
Assume that $m>1$, $a, b, \lambda, \mu>0$, $\chi\in\R$  $\tau\in \{0, 1\}$, $u_{0}, \tau v_{0}\ge 0$ such that $u_0$ is uniformly $C^{1+\alpha}$, and $\tau v_0$ is uniformly $C^{2+\alpha}$. For any $b>0$ satisfying  

\begin{equation*}
    b> b_m := \begin{cases}
        \left(\inf_{{p} >\max\{1,N(2-m)/2m\}}\frac{p-1}{p}\left(C_{p+1,N}\right)^{\frac{1}{p+1}}\right)|\chi|\mu, \quad & 1<m\le \frac{2N}{N+2},\\[1mm]
        0, & m> \frac{2N}{N+2},
    \end{cases}
\end{equation*}
when \(\tau=1\), and
\begin{equation*}
    b> b_m := \begin{cases}
        \frac{\mu|\chi|((2-m)N -2)_+}{(2-m)N}, \quad & 1<m\le 2-\frac2N,\\[1mm]
        0, & m> 2-\frac2N,
    \end{cases}
\end{equation*}
 
 when \(\tau=0\), there exists a  unique non-negative globally bounded classical solution  $(u_\eps$,  $v_\eps)$ of \eqref{main-perturbed-eq}  with initial condition 
$u_0,v_0$ for $\tau =1$ and $u_0$ when $\tau =0$. 
\end{prop}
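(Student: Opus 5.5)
The plan is to combine a local existence theory for the nondegenerate perturbed problem with the a priori bounds of Lemma~\ref{Lp-prior} and Lemma~\ref{v-bound-lm}, and then close with a continuation argument.

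\emph{Local existence.} For fixed $\eps\in(0,1)$ the diffusion $m(\eps+u)^{m-1}$ is bounded below by $m\eps^{m-1}>0$, so \eqref{main-perturbed-eq} is uniformly parabolic as long as $u$ stays bounded. When $\tau=1$, I would write $v$ through the variation of constants formula with the semigroup $T_p(t)$. When $\tau=0$, I would write $v=\mu(\lambda I-\Delta)^{-1}u$. In both cases a contraction mapping argument in $C([0,T_0];C_{\rm unif}(\R^N))\times C([0,T_0];C^{1}_{\rm unif}(\R^N))$ produces a unique local solution. The estimates \eqref{Lp Estimates-2}--\eqref{L-infty- Estimates-1} control $v$ and $\nabla v$ in terms of $\|u\|_\infty$. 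Standard Schauder theory for quasilinear equations with $C^{1+\alpha}$ initial data then upgrades this solution to the classical class of Definition~\ref{D.2}. The construction also gives a blow-up alternative: either $T_{\max}=\infty$, or $\limsup_{t\to T_{\max}}\big(\|u_\eps(t)\|_\infty+\|v_\eps(t)\|_{W^{1,\infty}}\big)=\infty$.

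\emph{Nonnegativity and uniqueness.} Nonnegativity of $u_\eps$ comes from the comparison principle, since $f(0)=0$ and the equation can be written as a linear equation in $u$ with bounded coefficients. Nonnegativity of $v_\eps$ then follows from positivity of the heat kernel, or of the resolvent when $\tau=0$. Uniqueness is part of the fixed-point argument, or alternatively follows from a weighted $L^2$ Gronwall estimate on the difference of two bounded classical solutions using the weight $\psi$.

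\emph{Global boundedness.} This is the heart of the argument. For the given $b>b_m$, I choose $p>N$ with $b>b_{m,p}$, so that Lemma~\ref{Lp-prior} applies with this $p$.
\begin{itemize}
\item When $\tau=1$ and $m\le 2N/(N+2)$, this works because $b_m$ is an infimum over $p$. The subtle point is that the infimum may be attained only near the lower endpoint, while a large $p$ is needed. I would handle this by first using some admissible $p_0$ to get a local $L^{p_0}$ bound. From there I would bootstrap to higher $p$, either by Lemma~\ref{uLp+2}, which requires only $p_0+1>N(2-m)/2$ and gives every $p>p_0$, or by Moser iteration as in \cite[Proposition 4.1]{hassan2025global}.
\item When $\tau=0$, a similar remark applies. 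The threshold $\mu|\chi|((2-m)N-2)_+/((2-m)N)$ equals $b_{m,p}$ at $p=N(2-m)/2$, so $b>b_m$ permits some $p$ slightly above that value, and the same bootstrap lifts it.
\end{itemize}
Lemma~\ref{Lp-prior} then gives the bound \eqref{lplocal} on $[0,T]$ for every $T<T_{\max}$, with constants independent of $T$. Lemma~\ref{v-bound-lm}(1)--(2) converts this into $L^\infty$ bounds on $u_\eps$ and $W^{1,\infty}$ bounds on $v_\eps$ that do not depend on $T$. These bounds contradict the blow-up alternative, so $T_{\max}=\infty$ and the solution is globally bounded.

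The main obstacle I expect is matching the infimum defining $b_m$ with the requirement $p>N$ in Lemma~\ref{v-bound-lm}, that is, justifying the bootstrap from a small admissible $p$ up to $p>N$. I would first try the Gagliardo--Nirenberg step of Lemma~\ref{uLp+2} inside the energy estimate of Lemma~\ref{Lp-prior}.
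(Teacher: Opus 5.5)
Your architecture is the same as the paper's sketch. It combines a local theory with a continuation criterion, Lemma~\ref{Lp-prior} at one admissible exponent, a bootstrap through Lemma~\ref{uLp+2} to all exponents, and Lemma~\ref{v-bound-lm} to turn a local $L^p$ bound with $p>N$ into $T$-independent bounds on $\|u_\eps\|_\infty$ and $\|v_\eps\|_{W^{1,\infty}}$. The gap is exactly the step you flagged, and for $\tau=1$, $1<m\le\frac{2N}{N+2}$ your proposal does not close it. The hypothesis $b>b_m$ only provides some $p>\max\{1,\frac{N(2-m)}{2m}\}$ with $b>b_{m,p}$, hence a bound in $L^\infty(0,T;L^p_{\rm loc})$. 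Lemma~\ref{uLp+2}, however, requires $u_\eps\in L^\infty(0,T;L^{p'+1}_{\rm loc})$ with $p'+1>\frac{N(2-m)}{2}$. So the Lebesgue exponent of the bound you already have must itself exceed $\frac{N(2-m)}{2}$; your condition ``$p_0+1>N(2-m)/2$'' for an $L^{p_0}$ bound is off by one. This threshold is sharp for the Gagliardo--Nirenberg step in that lemma:
\[
\theta r=\frac{2N(p-p'+1)}{N(p+m)-(N-2)(p'+1)}<2\iff p'+1>\frac{N(2-m)}{2}.
\]
Since $m>1$ gives $\frac{N(2-m)}{2m}<\frac{N(2-m)}{2}$, and, as you note, the infimum defining $b_m$ may be approached only near its lower endpoint, for $b$ slightly above $b_m$ every admissible exponent can lie in $\big(\frac{N(2-m)}{2m},\frac{N(2-m)}{2}\big]$. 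There the bootstrap cannot start. The Moser alternative does not rescue this. The Moser step used in Lemma~\ref{v-bound-lm}(2), via \cite[Proposition 4.1]{hassan2025global}, takes $\nabla v_\eps\in L^\infty$ as input, and that already requires a local $L^p$ bound with $p>N$. For $\tau=0$ your check is correct: there $b_m$ equals $b_{m,p}$ at $p=\frac{N(2-m)}{2}$, which is precisely the threshold of Lemma~\ref{uLp+2}.

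The paper's own sketch makes the identical move: it picks $p'+1>\frac{N(2-m)}{2m}$ and then invokes Lemma~\ref{uLp+2}. So this defect is inherited rather than introduced by you. As written, though, neither argument covers $b$ between $b_m$ and the same infimum taken over $p>\max\{1,\frac{N(2-m)}{2}\}$. To complete the proof you must either replace $\frac{N(2-m)}{2m}$ by $\frac{N(2-m)}{2}$ in the definition of $b_m$, or supply a genuinely different bootstrap that starts below $\frac{N(2-m)}{2}$.

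A smaller point concerns local existence. A contraction in $C([0,T_0];C_{\rm unif})\times C([0,T_0];C^1_{\rm unif})$ does not work for the quasilinear term $m\nabla\cdot((\eps+u)^{m-1}\nabla u)$, because the frozen-coefficient solution map is not Lipschitz in those norms. Local existence should instead go through a Schauder-type fixed point in H\"older spaces or Amann's quasilinear theory, as the paper does by reference to \cite{hassan2025global}.
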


In the next two subsections, we shall prove Lemma 3.1 for the parabolic-parabolic case and the parabolic-elliptic case, respectively. Subsequently, we shall drop $\eps$ when we refer to the solution of \eqref{main-perturbed-eq} for convenience.

\subsection{Proof of Lemma \ref{Lp-prior} when $\tau =1$.}\label{Lploc-est1}

In this subsection we establish the local $L^{p}$ norm estimate for non-negative solution of the perturbed problem, when $\tau =1$ on any finite time interval $[0,T]$. Let $0<\eps<1$, $0<\kappa<1$, and take $p>1$.  Recall that
$$\Omega_t:=[0,t]\times\R^N\quad \text{for any} \quad t\in (0,T].$$ 
Let $\psi$ be from Lemma \ref{psi-lm} with parameter $\kappa$. For convenience, we would establish $L^{p+1}$ estimate. Multiplying the first equation of \eqref{main-perturbed-eq} by $u^{p}\,\psi^2$ and integrating over $\R^N$  yields

\begin{align}
\label{new-new-lp-eq1}
&\quad\, \frac{1}{p+1}\frac{d}{dt}\int_{\R^N} u^{p+1}\psi^2  dx\nonumber \\
&= \int_{\R^N} u^{p}\psi^2\nabla\cdot\left[m(u+\eps)^{m-1}\nabla u - \chi u\nabla v\right] + u^{p}\psi^2f(u)\nonumber \\
&\leq  -mp\int_{\R^N} u^{p+m-2}|\nabla u|^2\psi^2 +m\int_{\R^N}u^{p}(u+\eps)^{m-1}|\nabla u||\nabla\psi^2| \nonumber\\
 &\quad+ \chi p \int_{\R^N} u^{p} \nabla u\cdot\nabla v\,\psi^2+ |\chi|\int_{\R^N} u^{p+1} |\nabla v||\nabla \psi^2|+\int_{\R^N}u^{p}(au - bu^2)\psi^2\nonumber\\
&\leq \underbrace{-mp\int_{\R^N} u^{p+m-2}|\nabla u|^2\psi^2}_{-I_1} +\underbrace{C\kappa\int_{\R^N}u^{p+m-1}|\nabla u|\psi^2}_{\kappa I_2}+ \underbrace{C\kappa\eps^{m-1}\int_{\R^N}u^{p}|\nabla u|\psi^2}_{\kappa I_3}\nonumber \\
 &\quad+ \underbrace{|\chi|\kappa\int_{\R^N} u^{p+1}|\nabla v| \psi^2}_{\kappa I_4} +\underbrace{ \int_{\R^N}  (au^{p+1}-b u^{p+2})\psi^2}_{I_5}+\underbrace{ \chi p \int_{\R^N} u^{p} \nabla u\cdot\nabla v\,\psi^2}_{I_6},
\end{align}
We now estimate each term in the above inequality. First, by Young's inequality,
\begin{align}
\label{I2-eq1}
\kappa I_2=  \kappa\int_{\R^N}u^{p+m-1}|\nabla u|\psi^2
&\le \frac{mp}{4}\int_{\R^N} u^{p+m-2}|\nabla u|^2 \psi^2 + \frac{m}{p}\kappa^2\int_{\R^N}u^{p+m}\psi^2.
\end{align}
Note that, when \(m>2\), the second term cannot be controlled by the logistic source. To handle this, we use the continuity argument introduced in \cite{hassan2025global}. 
For the remaining terms, the main difference from the model considered in \cite{hassan2025global} lies in the treatment of the terms involving \(v\), as mentioned in Section 1, in particular the term \(I_6\). We now proceed with the proof of the lemma.

\begin{proof} [Proof of Lemma \ref{Lp-prior} for $\tau =1$:]

\medskip

\noindent{\bf Case 1: When $1<m\le 2$.}

\medskip
Using Young's inequality, we have for $0<\delta<1$,
\begin{align}
\label{I2-eq1'}
\kappa I_2
&\le \frac{mp}{4}\int_{\R^N} u^{p+m-2} |\nabla u|^2 \psi^2dx + \delta \int_{\R^N}u^{p+2}\psi^2 dx+C_\delta\kappa^{-N},
\end{align}
where in the last inequality, we used the properties of $\psi $ in Lemma \ref{psi-lm}. Similarly by Young's inequality again, we have the following
\begin{align}\label{I6-eq1}
   \kappa I_3(t)
&\leq \frac{mp}{4}\int_{\R^N} u^{p+m-2}|\nabla u|^2\psi^2 +\delta\int_{\R^N}u^{p+2}\psi^2+ C_\delta\kappa^{-N}\nonumber\\
   \kappa I_4(t)&\le \k|\chi| \int_{\R^N} u^{p+2}\psi^2 +\kappa|\chi|A_p\int_{\R^N} |\nabla v|^{p+2}\psi^2\nonumber\\
   I_5&\le -\int_{\R^N} u^{p+1}\psi^2dx -(b-\delta)\int_{\R^N} u^{p+2}\psi^2 dx+C_\delta\kappa^{-N}
\end{align}
Where $A_p := \frac{1}{p+2}\left(\frac{p+2}{p+1}\right)^{-(p+1)}$. Now we would estimate $I_6$ for different cases also.

\medskip

\noindent{\bf Case 1.1: When $1 < m\le \frac{2N}{N+2}$}

For $r>0$, we have
\begin{align}\label{I4-eq1}
   I_6 &=  \chi p \int_{\R^N} u^{p} \nabla u\cdot\nabla v\,\psi^2    = \frac{ \chi p}{p+1} \int_{\R^N} \nabla(u^{p+1}) \cdot\nabla v\,\psi^2 \nonumber\\
   & \le \frac{ |\chi| p}{p+1}\int_{\R^N} u^{p+1} |\Delta v|\,\psi^2 + \frac{ 2\kappa|\chi| p}{p+1}\int_{\R^N} u^{p+1} |\nabla v|\psi^2\nonumber\\
    & \le (r+ 2\kappa|\chi|)\int_{\R^N}u^{p+2}\psi^2 + \left(\frac{p}{p+1}\right)^{p+2}A_{p}\left[|\chi|^{p+2}r^{-(p+1)}\int_{\R^N} |\Delta v|^{p+2}\psi^2 + 2\kappa|\chi| \int_{\R^N}|\nabla v|^{p+2}\psi^2\right],
\end{align}
Combining inequalities \eqref{new-new-lp-eq1} -\eqref{I4-eq1} together gives
\begin{align*}
&\frac{1}{p+1}\frac{d}{dt}\int_{\R^N} u^{p+1}\psi^2  dx +
\int_{\R^N} u^{p+1}\psi^2 \le -\frac{mp}{2} \int_{\R^N}  u^{p+m-2} |\nabla u|^2\psi^2 -(b-3\delta- 3|\chi|\k)\int_{\R^N} u^{p+2}\psi^2 \nonumber\\
&\quad + \left(\frac{|\chi|p}{p+1}\right)^{p+2}A_{p}r^{-(p+1)}\int_{\R^N} |\Delta v|^{p+2}\psi^2 + \kappa |\chi| A_p\left((\frac{p}{p+1})^{p+2} +2\right)\int_{\R^N} |\nabla v|^{p+2}\psi^2+C_\delta \kappa^{-N} 
\end{align*}
Multiplying $e^{\lambda(p+1)t}$ to both sides and integrating in time, this implies that
\begin{align}
\label{aux-lp'}
&\int_{\R^N}u^{p+1}(t,x)\psi^2(x)dx+\frac{mp}{2}\iint_{\Omega_{t}} e^{-\lambda(p+1)(t-s)}u^{p+m-2}|\nabla u|^2\psi^2dxds  \nonumber\\
&\lesssim -(b-3\delta - 3|\chi|\k -r)\iint_{\Omega_{t}}e^{-\lambda(p+1)(t-s)}u^{p+2}\psi^2dxds\nonumber + \left(\frac{|\chi|p}{p+1}\right)^{p+2}A_{p}r^{-(p+1)}\iint_{\Omega_t} e^{-\lambda(p+1)(t-s)}|\Delta v|^{p+2}\psi^2 \nonumber\\
&\quad + 3\kappa |\chi| \iint_{\Omega_t} e^{-\lambda(p+1)(t-s)}|\nabla v|^{p+2}\psi^2+\int_{\R^N} u^{p+1}(0,x)\psi^2dx 
+C_\delta \kappa^{-N} 
\end{align}
where $C_\delta$ is independent of $t$, we used the fact that $A_p, \left(\frac{p}{p+1}\right)^{p+2} \le 1$ in the last inequality. From \eqref{max-ine-psi}, we have
\begin{align*}
    &\left(\frac{|\chi|p}{p+1}\right)^{p+2}A_{p}r^{-(p+1)}\iint_{\Omega_t} e^{-\lambda(p+1)(t-s)} |\Delta v|^{p+2}\psi^2 + 3\kappa |\chi| \iint_{\Omega_t} e^{-\lambda(p+1)(t-s)}|\nabla v|^{p+2}\psi^2 \\
    &\lesssim \left[\left(\frac{|\chi|p}{p+1}\right)^{p+2}A_{p}r^{-(p+1)} + 3|\chi|\kappa\right]\left[\kappa^{-N} + \mu^{p+2}C_{p+2, N} \int_{0}^t\int_{\R^N}  e^{-\lambda(p+1)(t-s)} u^{p+2} \psi^2 \right]
\end{align*} 
Hence, choosing $\kappa\lesssim\delta$ in  \eqref{aux-lp'}, together with the above inequality, and regularity of our initial $u_0$, we get
\begin{align}\label{eq:u}
&\int_{\R^N}u^{p+1}(t,x)\psi^2(x)dx+\frac{mp}{2}\iint_{\Omega_{t}} e^{-\lambda(p+1)(t-s)}u^{p+m-2}|\nabla u|^2\psi^2dxds \nonumber\\
&\lesssim -(b-9\delta - r-r^{-(p+1)} \left(\frac{\mu|\chi|p}{p+1}\right)^{p+2}A_pC_{p+2,N })\iint_{\Omega_{t}}e^{-\lambda (p+1)(t-s)}u^{p+2}\psi^2dxds+ C_\delta \kappa^{-N} 
\end{align}

Note that
$$
\min_{r>0} \left(r+r^{-(p+1)} \left(\frac{\mu|\chi|p}{p+1}\right)^{p+2}A_pC_{p+2,N }\right)=\frac{p}{p+1}\left(C_{p+2,N}\right)^{\frac{1}{p+2}}|\chi|\mu.
$$
By the assumption $b> \frac{q-1}{q}\left(C_{q+1,N}\right)^{\frac{1}{q+1}}|\chi|\mu =\frac{p}{p+1}\left(C_{p+2,N}\right)^{\frac{1}{p+2}}|\chi|\mu$, (if we let $q= p+1$). Hence, we can choose $r>0$, $\delta>0$ small enough,  such that
$$
b\ge 9\delta +r+r^{-(p+1)} \left(\frac{\mu|\chi|p}{p+1}\right)^{p+2}A_{p}C_{p+2,N}.$$ 
Equation \eqref{eq:u} together with this inequality gives that $u\in L^\infty(0,T;  L^{p}_{loc}(\R^N))$ for any $p >1$ and $b>b_{m,p}$. This completes the proof of this case.

\medskip

\noindent{\bf Case 1.2: When $\frac{2N}{N+2} < m\le 2$.} 

In this case, by Young's inequality again, we have
\begin{align}\label{I4-eq2}
   I_6 &\le \frac{mp}{4}\int_{\R^N} u^{p+m-2}|\nabla u|^2 \psi^2 + \frac{p}{4m} \int_{\R^N} u^{p-m+2}|\nabla v|^2 \psi^2 \nonumber\\
   &\le \frac{mp}{4}\int_{\R^N} u^{p+m-2}|\nabla u|^2 \psi^2 + \delta \int_{\R^N} u^{p+2} \psi^2 + C_\delta\int_{\R^N} |\nabla v|^{\frac{2(p+2)}{m}} \psi^2 
\end{align}
Combining this inequality with \eqref{new-new-lp-eq1}-\eqref{I6-eq1} gives 
\begin{align*}
    & \frac{1}{p+1}\frac{d}{dt}\int_{\R^N} u^{p+1}\psi^2  dx + \int_{\R^N} u^{p+1}\psi^2 \le -\frac{mp}{4}\int_{\R^N} u^{p+m-2}|\nabla u|^2 \psi^2 -(c-3\delta+\k) \int_{\R^N} u^{p+2} \psi^2 \nonumber\\ 
    &\qquad\qquad + C_\delta\int_{\R^N} |\nabla v|^{\frac{2(p+2)}{m}} \psi^2 + \k\int_{\R^N} |\nabla v|^{p+2} \psi^2 + C_\delta \k^{-N} 
\end{align*}
Now summing the above inequality with \eqref{nablavr} from Lemma \ref{l4.7} gives that for any $q>1$,
\begin{align}\label{case1.2}
    & \frac{1}{p+1}\frac{d}{dt}\int_{\R^N} u^{p+1}\psi^2  dx + \frac{1}{q}\frac{d}{dt}\int_{\R^N}|\nabla v|^{2q}\psi^2 + C\int_{\R^N}|\nabla v|^{2q-4}|\nabla |\nabla v|^2|^2\psi^2 + C\int_{\R^N}|\nabla v|^{2q}\psi^2 + \int_{\R^N} u^{p+1}\psi^2\nonumber\\
       & \qquad\qquad \le -c\int_{\R^N} u^{p+m-2}|\nabla u|^2 \psi^2 -(c-3\delta+\k) \int_{\R^N} u^{p+2} \psi^2  + C_\delta\int_{\R^N} |\nabla v|^{\frac{2(p+2)}{m}} \psi^2 + C \int_{\R^N}u^2|\nabla v|^{2q-2}\psi^2 \nonumber\\
       & \qquad\qquad + \k\int_{\R^N} |\nabla v|^{p+2} \psi^2 + C_\delta \k^{-N} \nonumber\\
       & \qquad\qquad \lesssim -c\int_{\R^N} u^{p+m-2}|\nabla u|^2 \psi^2 -(c-4\delta+\k) \int_{\R^N} u^{p+2} \psi^2  + C_\delta\int_{\R^N} |\nabla v|^{\frac{2(p+2)}{m}} \psi^2 \nonumber\\
       & \qquad\qquad + C_\delta \int_{\R^N} |\nabla v|^{\frac{2(p+2)(q-1)}{p}}\psi^2 + C_\delta \k^{-N}. 
\end{align}
The last inequality holds by Young's inequality.
Now by a change of variable and Gagliardo Sobolev inequality, we can get a $C$ independent of $\k$, such that 
\begin{align*}
    \int_{\R^N} |\nabla v|^{a}\psi^2 &\le \sum_{kz\in\Z^N} \psi^2(z) \int_{B_{N/\k}} |\nabla v|^{a}(x) \, dx 
    = \sum_{kz\in\Z^N} \psi^2(z)\||\nabla v|^q\|_{L^{\frac{a}{q}}(B_{N/\k})}^{\frac{a}{q}}\\
    &\le \sum_{kz\in\Z^N} C\psi^2(z)\left[\|\nabla|\nabla v|^q\|^{\theta a/q}_{L^2(B_{N/\k})}\||\nabla v|^q\|^{(1-\theta)a/q}_{L^\frac{2}{q}(B_{N/\k})} + \||\nabla v|^q\|_{L^\frac{2}{q}(B_{N/\k})}\right]\\
    &\lesssim \sum_{kz\in\Z^N} \psi^2(z)\left[\|\nabla|\nabla v|^q\|^{\theta a/q}_{L^2(B_{N/\k})} + 1\right].
\end{align*}
The last inequality follows from the second part of Lemma \ref{l4.7} which gives that for $1<m\le 2$, $\nabla v\in L^{\infty}([0,T]; L^2_{loc}(\R^N))$. Also, 
\[\theta = \frac{\frac q2-\frac q{a}}
{\frac q2-\frac12+\frac1N}.\]
Now for $a = \frac{2(q-1)(p+2)}{p} >2$ when we choose $q>\frac{2(p+1)}{p+2}$, we have $\theta >0$ and 
\[
\theta a/q<2
\iff
\frac{\frac{(q-1)(p+1)}{p-1}-1}{\frac{q-1}{2}+\frac1N}<2
\iff
\frac{(q-1)(p+1)}{p-1}-1<q-1+\frac2N \iff q<\frac{p+1}{2}+\frac{p-1}{N}.
\]
Hence for $a = \frac{2(q-1)(p+2)}{p} >2$, 
\[
0<\theta a/q<2
\iff
\frac{2p}{p+1}<q<\frac{p+1}{2}+\frac{p-1}{N}.
\]
Similarly, for $a= \frac{2(p+2)}{m}$ 
\[
0<\theta a/q<2
\iff
p+1>m
\quad\text{and}\quad
q>\frac{p+1}{m}-\frac2N.
\]
Now for $p>\max\{2N-1, m-1, 1\}$, and $m>\frac{2N}{N+2}$, we can find a $$q\in \left(\max\{\frac{2p}{p+1}, \frac{p+1}{m}-\frac2N\}, \frac{p+1}{2}+\frac{p-1}{N}\right)$$
such that $0<\theta a/q <2$ for $a =  \frac{2(q-1)(p+2)}{p}$ and for $a= \frac{2(p+2)}{m}$. Hence for these $a's$  since $\psi(x-z) =1$ for $x\in B_{N/\k}(z)$, and by Young's inequality, we get
\begin{align*}
    \int_{\R^N} |\nabla v|^{a}\psi^2 
    &\lesssim \sum_{kz\in \Z^N} \psi^2(z)\left[\left(\int_{B_{N/\k}} |\nabla v|^{2q-4}|\nabla|\nabla v|^2|^{2}\psi^2(x-z)\right)^{\theta a/q} + 1\right]\\
    &\lesssim\delta \sup_{x_0\in\R^N} \int_{\R^N} |\nabla v|^{2q-4}|\nabla|\nabla v|^2|^{2}\psi^2(x-x_0)\, dx +C_{\delta}\k^{-N}
\end{align*}
Substituting this into \eqref{case1.2} gives that for $\delta$ and $\k$ small enough, $p>\max\{2N-1, m-1, 1\}$ and $\frac{2N}{N+2} <m \le 2$. 
\begin{align*}
    & \frac{d}{dt}\left(\int_{\R^N} u^{p+1}\psi^2  dx + \int_{\R^N}|\nabla v|^{2q}\psi^2\right) + \int_{\R^N}|\nabla v|^{2q}\psi^2 + \int_{\R^N} u^{p+1}\psi^2 + C\int_{\R^N}|\nabla v|^{2q-4}|\nabla |\nabla v|^2|^2\psi^2 \\  &\qquad\qquad \lesssim \delta \sup_{x_0\in\R^N} \int_{\R^N} |\nabla v|^{2q-4}|\nabla|\nabla v|^2|^{2}\psi^2(x-x_0)\, dx+ C_\delta \k^{-N}. 
\end{align*}
Multiplying through by $e^{\lambda(p+1)t}$ and integrating in time then shifting in space gives
\begin{align*}
    & \int_{\R^N} u^{p+1}\psi^2(x-x_0)  dx + C\iint_{\Omega_t} e^{-\lambda(p+1)(t-s)}|\nabla v|^{2q-4}|\nabla |\nabla v|^2|^2\psi^2(x-x_0) \\  &\qquad\qquad \lesssim \delta \sup_{x_0\in\R^N} \iint_{\Omega_t} e^{-\lambda(p+1)(t-s)} |\nabla v|^{2q-4}|\nabla|\nabla v|^2|^{2}\psi^2(x-x_0)\, dx+ C_\delta \k^{-N}. 
\end{align*}
Thus, taking supremum over $x_0\in \R^N$ and choosing $\delta$ small enough gives, $u(t, \cdot)\in L^{p}_{loc}(\R^N)$ and this completes the proof in this case.

\medskip

\noindent{\bf Case 2: When $m>2$.}

As stated earlier, we would use a continuity argument introduced in \cite{hassan2025global} in this case.

\medskip

Recall that from \eqref{I2-eq1'}, we have the following estimate for $I_2$
\[\kappa I_2
\le \frac{mp}{4}\int_{\R^N} u^{p+m-2}|\nabla u|^2 \psi^2 + \frac{m}{p}\kappa^2\int_{\R^N}u^{p+m}\psi^2\]
Now combining this inequality with \eqref{I6-eq1} for $I_3$-$I_5$ estimates and \eqref{I4-eq2} for $I_6$, gives 
\begin{align*}
    & \frac{1}{p+1}\frac{d}{dt}\int_{\R^N} u^{p+1}\psi^2  dx + \int_{\R^N} u^{p+1}\psi^2 + c\int_{\R^N} u^{p+m-2}|\nabla u|^2 \psi^2 \\
    &\lesssim \k^2\int_{\R^N} u^{p+m} \psi^2+ C \int_{\R^N} u^{p+2} \psi^2 + C_\delta\int_{\R^N} |\nabla v|^{\frac{2(p+2)}{m}} \psi^2 + + \k\int_{\R^N} |\nabla v|^{p+2} \psi^2 + C_\delta \k^{-N} \\
    &\lesssim \k^2\int_{\R^N} u^{p+m} \psi^2+ C \int_{\R^N} u^{p+2} \psi^2 +  \k\int_{\R^N} |\nabla v|^{p+2} \psi^2 + C_\delta \k^{-N}.
\end{align*}
We used Young's inequality and the fact that $m>2$, to get the last inequality. 
Multiplying $e^{\lambda(p+1)t}$ to both sides and integrating in time gives
\begin{align}
\label{aux-lp-eq1}
&\int_{\R^N}u^{p+1}(t,x)\psi^2(x)dx+c\iint_{\Omega_{t}} e^{-\lambda(p+1)(t-s)}u^{p+m-2}|\nabla u|^2\psi^2dxds\nonumber\\
&\lesssim \int_{\R^N} u^{p+1}(0,x)\psi^2dx + \kappa^2\iint_{\Omega_{t}} e^{-\lambda(p+1)(t-s)}u^{p+m}\psi^2
+\kappa\iint_{\Omega_{t}} e^{-\lambda(p+1)(t-s)}|\nabla v|^{p+2}\psi^2 +C_\delta \kappa^{-N}
\end{align}
By equation \eqref{max-ine-psi}, we have 
\begin{align}\label{aux-lp-eq2}
   \iint_{\Omega_{t}} e^{-\lambda(p+1)(t-s)}|\nabla v|^{p+2}\psi^2&\lesssim \kappa^{-N}+ \iint_{\Omega_t} 
e^{-\lambda(p+1)(t-s)} u^{p+2}\psi^2\nonumber\\
&\lesssim \kappa^{-N}+ \delta_1\iint_{\Omega_t} 
e^{-\lambda(p+1)(t-s)} u^{p+m}\psi^2
\end{align}
Substituting \eqref{aux-lp-eq2} into \eqref{aux-lp-eq1} and using the boundedness of the initial, we get that for $\delta_1 <\k$
\begin{align}
\label{aux-lp-eq3'}
&\int_{\R^N}u^{p+1}(t,x)\psi^2(x)dx+c\iint_{\Omega_{t}} e^{-\lambda(p+1)(t-s)}u^{p+m-2}|\nabla u|^2\psi^2dxds\nonumber\\
&\lesssim  \k^2\iint_{\Omega_{t}}e^{-\lambda(p+1)(t-s)}u^{p+m}\psi^2dxds + \kappa^{-N}
\end{align}

For any given $x_0\in\R^N$ and $q>1$, define
\begin{equation*}
X_{ q,x_0}(t)=\int_{\R^N} u^q(t, x) \psi^2(x-x_0)dx,\quad X_q(t)=\sup_{x_0\in\R^N} X_{q,x_0}(t),
\end{equation*}
\begin{equation*}
\text{and}\quad Y_{q}(t):=\sup_{s\in[0,t]}X_{q}(s).
\end{equation*}
Note that it is sufficient to prove the estimates for $N\ge 3$. By the properties of $\psi$,
\begin{align}\lb{I1-eq}
\int_{\R^N} u^{p+m-2}|\nabla u|^2\psi^2dx\gtrsim \int_{\R^N} |\nabla (u^{\frac{p+m}{2}})|^2\psi^2
\geq  c\int_{\R^N} |\nabla (u^{\frac{p+m}{2}}\psi)|^2
-C\kappa^2 \int_{\R^N} u^{p+m}\psi^2.
\end{align}
Applying Lemma \ref{I1-lm} with $r$ being replaced by $\frac{p+m}{2}$ (here we need $N\geq3$, the result  for when $N<3$ follows from the case $N\ge 3$), we have
\begin{align}\label{new-poin'}
\kappa^2\int_{\R^N}u^{p+m}\psi^2
&\leq  \delta\|\nabla(u^{\frac{p+m}{2}}\psi)\|_{2}^2+C_{\delta}\kappa^{2+{\theta^*}}\left[\int_{\R^N}u^{\frac{p+m}{2}+1}\psi^{q^*}dx\right]^{2/q^*}, 
\end{align}
where
$$
q^*:=\frac{p+m+2}{p+m}\in (1, 2),
\quad{and}\quad
{\theta^*}:=\frac{N(p+m-2)}{p+m+2}>0.
$$
Using the properties of $\psi$ from Lemma \ref{psi-lm}, together with Young's inequality in the above inequality, it follows that for $p\ge m$,
\begin{align*}
\int_{\R^N}u^{\frac{p+m}{2}+1}(s,x)\psi^{q^*}(x)\,dx
&\lesssim \sum_{\kappa z\in\Z^N}\psi^{q^*}(z)\int_{B_{N/\kappa}(z)}u^{\frac{p+m}{2}+1}(s,x)\psi^2(x-z)\,dx\\
&\lesssim \sup_{x_0\in\R^N} \int_{\R^N}u^{\frac{p+m}{2}+1}(s,x)\psi^2(x-x_0)\,dx\\
&\lesssim \sup_{x_0\in\R^N} \int_{\R^N}u^{p+1}(s,x)\psi^2(x-x_0)\,dx+\int_{\R^N}\psi^2(x-x_0)\,dx\\
&\lesssim X_{p+1}(s)+\kappa^{-N}.
\end{align*}
It follows from \eqref{new-poin'} that
\beq
\lb{new-poin}
\kappa^2\int_{\R^N}u^{p+m}\psi^2
\leq  \delta\|\nabla(u^{\frac{p+m}{2}}\psi)\|_{2}^2+C_{\delta}\kappa^{2+{\theta^*}}X_{p+1}(s)^{2/q^*}+C\kappa^{2+{\theta^*}-2N/q^*}.
\eeq
Note that $2+{\theta^*}-2N/q^* = 2-N$. Thus, taking $\delta$ small enough,
\eqref{I1-eq} and \eqref{new-poin} yields,
\begin{equation}
\label{I1-eq2}
\frac{c}{2} \int_{\R^N} u^{p+m-2}|\nabla u|^2\psi^2dx\geq c\|\nabla(u^{\frac{p+m}{2}}\psi)\|_{2}^2-C\kappa^{2+{\theta^*}}X_{p+1}(s)^{2/q^*}-C\kappa^{2-N}.
\end{equation}
Adding \eqref{I1-eq2} and \eqref{new-poin} gives that for $\delta<c/4$, and $p\ge m$,
\begin{equation}\label{lup+m}
   \kappa^2\int_{\R^N}u^{p+m}\psi^2
\lesssim  \frac{c}{2} \int_{\R^N} u^{p+m-2}|\nabla u|^2\psi^2dx + C_{\delta}\kappa^{2+{\theta^*}}X_{p+1}(s)^{2/q^*}+C\kappa^{2-N}. 
\end{equation}
Substituting this into \eqref{aux-lp-eq3'} we have that for $p\ge m$,
\begin{align*}
\int_{\R^N}u^{p+1}(t,x)\psi^2(x)dx+c/2\iint_{\Omega_{t}} e^{-\lambda(p+1)(t-s)}u^{p+m-2}|\nabla u|^2\psi^2dxds \lesssim    \kappa^{-N} +  C_{\delta}\kappa^{2+{\theta^*}}Y_{p+1}(t)^{2/q^*}.
\end{align*}
Shifting in space and taking supremum over $x_0$ in $\R^N$ gives
\begin{align*}
&\sup_{x_0\in\R^N}\int_{\R^N}u^{p+1}(t,x)\psi^2(x-x_0)dx\lesssim  \kappa^{-N}+  C_{\delta}\kappa^{2+{\theta^*}}Y_{p+1}(t)^{2/q^*}
\end{align*}
Taking supremum in $t\in [0,t_0]$ for any $t_0\in [0,T]$,  there exists $C_0>0$ independent of $t_0$ such that for all $\kappa$ sufficiently small, 
\begin{align}
\label{estimate-case2-eq3}
&Y_{p+1}({t_0})\leq C_0\kappa^{2+\frac{N(p+m-2)}{p+m+2}} \left[Y_{p+1}(t_0)\right]^{\frac{2(p+m)}{p+m+2}}+C_0\kappa^{-N}.
\end{align}

By further taking $C_0$ to be large enough if necessary, we can assume $Y_{p+1}(0)=X_{p+1}(0)\leq C_0$, we claim that, if  $0<\kappa< 1$ is sufficiently small, 
\begin{equation}
\label{new-claim-eq}
Y_{p+1}(T)\le 2C_0 \kappa^{-N}.
\end{equation}
In fact,
 assume for contradiction that  there is $t_0\in [0,T]$ such that
$$
Y_{p+1}(t_0)=2C_0  \kappa^{-N}.
$$
By \eqref{estimate-case2-eq3},
\begin{align*}
2C_0\kappa^{-N}&\le C_0 \kappa^{2+\frac{N(p+m-2)}{p+m+2}} (2 C_0\kappa^{-N}) ^{\frac{2(p+m)}{p+m+2}}+ C_0\kappa^{-N}\\
&=C_0(2C_0)^{\frac{2(p+m)}{p+m+2}}\kappa^{2-N}+C_0\kappa^{-N}.
\end{align*}
This implies that
$$
1\le (2C_0)^{\frac{2(p+m)}{p+m+2}}\kappa^{2},
$$
which is impossible if $\kappa$ was chosen to be $\frac12 (2C_0)^{-\frac{p+m}{p+m+2}}$. Therefore, the claim \eqref{new-claim-eq} hold true. This completes the proof.
\end{proof}

\subsection{Proof of Lemma \ref{Lp-prior} when $\tau =0$.}\label{Lploc-est0}
In this subsection, we shall establish the local $L^p$ estimate for \eqref{main-perturbed-eq} when $\tau =0$. 

\begin{proof} [Proof of Lemma \ref{Lp-prior} for $\tau =0$:]
Recall that we have the following from previous subsection 
\begin{align*}
&\quad\, \frac{1}{p+1}\frac{d}{dt}\int_{\R^N} u^{p+1}\psi^2  dx\nonumber \\
&\leq \underbrace{-mp\int_{\R^N} u^{p+m-2}|\nabla u|^2\psi^2}_{-I_1} +\underbrace{C_m\kappa\int_{\R^N}u^{p+m-1}|\nabla u|\psi^2}_{\kappa I_2}+ \underbrace{C_m\kappa\eps^{m-1}\int_{\R^N}u^{p}|\nabla u|\psi^2}_{\kappa I_3}\nonumber \\
 &\quad+ \underbrace{|\chi|\kappa\int_{\R^N} u^{p+1}|\nabla v| \psi^2}_{\kappa I_4} +\underbrace{ \int_{\R^N}  (au^{p+1}-b u^{p+2})\psi^2}_{I_5}+\underbrace{ \chi p \int_{\R^N} u^{p} \nabla u\cdot\nabla v\,\psi^2}_{I_6}.
\end{align*}
We also have the following estimates for any $\delta>0$, 
\begin{align}\label{I22-I5}
\kappa I_2
&\le \frac{mp}{4}\int_{\R^N} u^{p+m-2}|\nabla u|^2 \psi^2 + \frac{m}{p}\kappa^2\int_{\R^N}u^{p+m}\psi^2\nonumber\\
\kappa I_3(t) &\leq \frac{mp}{4}\int_{\R^N} u^{p+m-2}|\nabla u|^2\psi^2 +\delta\int_{\R^N}u^{p+2}\psi^2+ C_\delta\kappa^{-N}\nonumber\\
\kappa I_4(t)&\le \delta \int_{\R^N} u^{p+2}\psi^2 +\kappa|\chi|A_p\int_{\R^N}|\nabla v|^{p+2}\psi^2\nonumber\\
 I_5&\le -\int_{\R^N} u^{p+1}\psi^2dx -(b-\delta)\int_{\R^N} u^{p+2}\psi^2 dx+C_\delta\kappa^{-N}.
\end{align}
Again, $A_p := \frac{1}{p+2}\left(\frac{p+2}{p+1}\right)^{-(p+1)}$. Putting these inequalities together gives 
\begin{align}\label{I_2-I5}
&\frac{1}{p+1}\frac{d}{dt}\int_{\R^N} u^{p+1}\psi^2  dx + \int_{\R^N} u^{p+1}\psi^2 +c\int_{\R^N} u^{p+m-2}|\nabla u|^2 \psi^2  \lesssim \k^2 \int_{\R^N}u^{p+m}\psi^2 \nonumber \\
&\qquad\qquad -(b-3\delta)\int_{\R^N} u^{p+2}\psi^2 + \kappa|\chi|A_p\int_{\R^N}|\nabla v|^{p+2}\psi^2 +I_6 + C_\delta \k^{-N}
\end{align}
Similar to the previous section, we  would estimate $I_6$ for different cases.

\medskip

\noindent{\bf Case 1.1: when $1 < m\le 2-\frac{2}{N}$}\label{case1tau0}
We have the following for $\tau =0$
\begin{align*}
   I_6 &=  \chi p \int_{\R^N} u^{p} \nabla u\cdot\nabla v\,\psi^2    = \frac{ \chi p}{p+1} \int_{\R^N} \nabla(u^{p+1}) \cdot\nabla v\,\psi^2 \nonumber\\
   & \le \frac{ -\chi p}{p+1}\int_{\R^N} u^{p+1} \Delta v\,\psi^2 + \frac{ 2\kappa|\chi| p}{p+1}\int_{\R^N} u^{p+1} |\nabla v|\psi^2\nonumber\\
   &= \frac{ -\chi p}{p+1}\int_{\R^N} \lambda u^{p+1} v + \frac{ \mu \chi p}{p+1}\int_{\R^N} u^{p+2}\,\psi^2 + \frac{ 2\kappa|\chi| p}{p+1}\int_{\R^N} u^{p+1} |\nabla v|\psi^2\nonumber\\
    & \le (|\chi|\k+\frac{ \mu |\chi| p}{p+1})\int_{\R^N} u^{p+2}\,\psi^2 + \kappa |\chi| \int_{\R^N}|\nabla v|^{p+2}\psi^2,
\end{align*}
We used the non-negativity of \(u\) and \(v\) to drop the first term, and then applied Young's inequality to obtain the last inequality. Substituting this into \eqref{I_2-I5} and using the fact that \(m<2\) in this case, we obtain
\begin{align}\label{tau0-lp-eq1}
&\frac{1}{p+1}\frac{d}{dt}\int_{\R^N} u^{p+1}\psi^2  dx +
\int_{\R^N} u^{p+1}\psi^2 + c \int_{\R^N}  u^{p+m-2} |\nabla u|^2\psi^2 \nonumber\\
&\le -(b-3\delta -\k^2 -|\chi|\k- \frac{ \mu |\chi| p}{p+1})\int_{\R^N} u^{p+2}\psi^2  + 3 \kappa |\chi| \int_{\R^N} |\nabla v|^{p+2}\psi^2+C_\delta \kappa^{-N} 
\end{align}
By \eqref{nablav-est}, we have that 
\[\int_{\R^N} |\nabla v|^{p+2}\psi^2 \le C\int_{\R^N} u^{p+2}\psi^2.\]
Thus, \eqref{tau0-lp-eq1} becomes 
\begin{align*}
&\frac{1}{p+1}\frac{d}{dt}\int_{\R^N} u^{p+1}\psi^2  dx +
\int_{\R^N} u^{p+1}\psi^2 + c \int_{\R^N}  u^{p+m-2} |\nabla u|^2\psi^2 \nonumber\\
&\le  -(b-3\delta - \k^2 -4 C \kappa |\chi|- \frac{ \mu |\chi| p}{p+1})\int_{\R^N} u^{p+2}\psi^2 +C_\delta \kappa^{-N}. 
\end{align*}
Multiplying through by $e^{(p+1)s}$ and integrating in time together with regularity of our initial data, gives
\begin{align}\label{tau0-eq6}
&\int_{\R^N} u^{p+1}(t)\psi^2 dx + c \iint_{\Omega_t} e^{-(p+1)(t-s)}  u^{p+m-2} |\nabla u|^2\psi^2\nonumber\\
&\qquad \lesssim  -(b-3\delta -\k^2- 4 C \kappa |\chi|- \frac{ \mu |\chi| p}{p+1})\iint_{\Omega_t} e^{-(p+1)(t-s)}u^{p+2}\psi^2 +C_\delta \kappa^{-N}. 
\end{align}
Note that by assumption, $b > \frac{ \mu |\chi| (q-1)}{q} =  \frac{ \mu |\chi| p}{p+1}$ when $q =p+1$. Thus we can choose $\delta$ and $\kappa$ small enough, such that
$$b\ge 3\delta +\k^2 + 4 C \kappa |\chi| + \frac{ \mu |\chi| p}{p+1}.$$
Hence
\begin{equation}\label{ulp'}
    \int_{\R^N} u^{p+1}(t)\psi^2 \lesssim \kappa^{-N}, \qquad \forall t\in [0,T], \qquad 0<p<\frac{b}{(\mu |\chi|- b)_+}.
\end{equation}

\medskip

\noindent{\bf Case 1.2: When $2-\frac{2}{N}<m\le 2$}

\medskip

From \eqref{tau0-eq6} there is a $C>0$ independent of $t, \kappa, \eps$ such that 
\begin{align*}
&\int_{\R^N} u^{p+1}(t)\psi^2 dx + c \iint_{\Omega_t} e^{-(p+1)(t-s)}  u^{p+m-2} |\nabla u|^2\psi^2\lesssim  C\iint_{\Omega_t} e^{-(p+1)(t-s)}u^{p+2}\psi^2 +C_\delta \kappa^{-N}. 
\end{align*}
When $2-\frac{2}{N}< m\le 2$, by Lemma \ref{uLp+2} again, for any $p>1$
\begin{align*}
    \int_{\R^N} u^{p+2}\psi^2 
    &\lesssim \delta\sup_{x_0\in\R^N}\int_{\R^N} |\nabla u^{\frac{p+m}{2}}|^2\psi^{2}(x-x_0) + C_\delta\kappa^{-N}.
\end{align*}
Hence, 
\begin{align*}
&\int_{\R^N} u^{p+1}(t)\psi^2 dx + c \iint_{\Omega_t} e^{-(p+1)(t-s)}  u^{p+m-2} |\nabla u|^2\psi^2 \\
&\lesssim \delta\sup_{x_0\in\R^N}\iint_{\Omega_t} e^{-(p+1)(t-s)}|\nabla u^{\frac{p+m}{2}}|^2\psi^{2}(x-x_0) + C_\delta\kappa^{-N} . 
\end{align*}
Shifting in space and taking supremum over $x_0\in\R^N$ gives 
\begin{align*}
&\sup_{x_0\in\R^N}\int_{\R^N} u^{p+1}(t)\psi^2(x-x_0) dx + c \sup_{x_0\in\R^N}\iint_{\Omega_t} e^{-(p+1)(t-s)}  u^{p+m-2} |\nabla u|^2\psi^2(x-x_0) \\
&\lesssim \delta\sup_{x_0\in\R^N}\iint_{\Omega_t} e^{-(p+1)(t-s)}|\nabla u^{\frac{p+m}{2}}|^2\psi^{2}(x-x_0) + C_\delta\kappa^{-N} . 
\end{align*}
By choosing $\delta$ small enough, we get that $u\in L^\infty(0, T;  L^{p+1}_{loc}(\R^N))$ for any $p >1$. This completes the proof of this case.

\medskip

\noindent{\bf Case 2: When $m>2$}

\medskip

Just as in the case $\tau =1$, we estimate $I_6$ as 
\begin{align*}
   I_6 &\le \frac{mp}{4}\int_{\R^N} u^{p+m-2}|\nabla u|^2 \psi^2 + \delta \int_{\R^N} u^{p+2} \psi^2 + C_\delta\int_{\R^N} |\nabla v|^{\frac{2(p+2)}{m}} \psi^2 
\end{align*}
Combining this with \eqref{I22-I5} gives 
\begin{align*}
&\frac{1}{p+1}\frac{d}{dt}\int_{\R^N} u^{p+1}\psi^2  dx + \int_{\R^N} u^{p+1}\psi^2 +c\int_{\R^N} u^{p+m-2}|\nabla u|^2 \psi^2  \lesssim \k^2 \int_{\R^N}u^{p+m}\psi^2 \nonumber \\
&\qquad\qquad +C\int_{\R^N} u^{p+2}\psi^2 + \kappa|\chi|A_p\int_{\R^N}|\nabla v|^{p+2}\psi^2 +C_\delta\int_{\R^N} |\nabla v|^{\frac{2(p+2)}{m}} \psi^2+ C_\delta \k^{-N}\nonumber\\
&\qquad\qquad\lesssim \k^2 \int_{\R^N}u^{p+m}\psi^2 +\k\int_{\R^N}|\nabla v|^{p+2}\psi^2 +C_\delta\k^{-N}
\end{align*}
We used Young's inequality and the fact that $m>2$, to get the last inequality. Again by \eqref{nablav-est}, we have that 
\[\int_{\R^N} |\nabla v|^{p+2}\psi^2 \le C\int_{\R^N} u^{p+2}\psi^2\le \delta_1\int_{\R^N}u^{p+m}\psi^2 + C_{\delta_1}\k^{-N}.\]
Thus, for $\delta_1< \k$, we have
\begin{align*}
\frac{1}{p+1}\frac{d}{dt}\int_{\R^N} u^{p+1}\psi^2  dx + \int_{\R^N} u^{p+1}\psi^2 +c\int_{\R^N} u^{p+m-2}|\nabla u|^2 \psi^2  
\lesssim \k^2 \int_{\R^N}u^{p+m}\psi^2  +C_\delta\k^{-N}
\end{align*}
By \eqref{lup+m}
\begin{equation*}
   \kappa^2\int_{\R^N}u^{p+m}\psi^2
\lesssim  \frac{c}{2} \int_{\R^N} u^{p+m-2}|\nabla u|^2\psi^2dx+ C_{\delta}\kappa^{2+{\theta^*}}X_{p+1}(s)^{2/q^*}+C\kappa^{2-N}. 
\end{equation*}
Hence, these two inequalities implies 
\begin{align*}
\int_{\R^N}u^{p+1}(t,x)\psi^2(x)dx+c/2\iint_{\Omega_{t}} e^{-\lambda(p+1)(t-s)}u^{p+m-2}|\nabla u|^2\psi^2dxds \lesssim    \kappa^{-N} +  C_{\delta}\kappa^{2+{\theta^*}}Y_{p+1}(t)^{2/q^*}.
\end{align*}
Shifting in space and taking supremum over $x_0$ in $\R^N$ gives
\begin{align*}
&\sup_{x_0\in\R^N}\int_{\R^N}u^{p+1}(t,x)\psi^2(x-x_0)dx\lesssim  \kappa^{-N}+  C_{\delta}\kappa^{2+{\theta^*}}Y_{p+1}(t)^{2/q^*}
\end{align*}
Taking supremum in $t\in [0,t_0]$ for any $t_0\in [0,T]$,  there exists $C_0>0$ independent of $t_0$ such that for all $\kappa$ sufficiently small, 
\begin{align*}
&Y_{p+1}({t_0})
\leq C_0\kappa^{2+\frac{N(p+m-2)}{p+m+2}} \left[Y_{p+1}(t_0)\right]^{\frac{2(p+m)}{p+m+2}}+C_0\kappa^{-N}.
\end{align*}
By same argument as in the case of $\tau =1$, if  $0<\kappa< 1$ is sufficiently small, 
\begin{equation*}
Y_{p+1}(T)\le 2C_0 \kappa^{-N}.
\end{equation*}
This completes the proof.
\end{proof}

\subsection{Proof of Proposition 3.1 (Sketch)}
\begin{proof}
The proof is based on a standard fixed-point argument, together with the theory of quasilinear parabolic equations developed in
\cite{amann1993nonhomogeneous,ladyzhenskaya1968linear}, applied in a suitable fixed-point framework. For the case \(\tau=1\), the proof follows by an argument similar to that of Proposition 1.1 in \cite{hassan2025global} for the consumption case \eqref{special-eq3}. See also \cite{sugiyama2006global,sugiyama2007time,wang2014quasilinear} for further details on fixed-point methods and the existence of classical solutions to perturbed problems of the form \eqref{main-perturbed-eq}.

We first show that the condition \(b>b_m\) implies that \(u\) is locally in \(L^p\) for every \(p>1\), using Lemma 3.1 as follows.

\medskip

\noindent
\textbf{The case \(\tau=1\).}
Assume first that \(1<m\le \frac{2N}{N+2}\). Since $b>b_m = \left(\inf_{{p} >\max\{1,\frac{N(2-m)}{2m}\}}\frac{p-1}{p}\left(C_{p+1,N}\right)^{\frac{1}{p+1}}\right)|\chi|\mu $
we may choose \(p'\) such that
\(
p'+1>\frac{N(2-m)}{2m}
\)
and
\(
b>
\mu|\chi|
\left(
\frac{p'}{p'+1}
\left(C_{p'+2,N}\right)^{\frac{1}{p'+2}}
\right).
\)
Hence, by Lemma 3.1,
\(
u\in L^\infty\big([0,T];L^{p'+1}_{\mathrm{loc}}(\mathbb R^N)\big).
\)
We claim that for every \(p>p'\), \(u\) is locally in \(L^{p+1}\). Indeed, from \eqref{eq:u}, there exists a constant \(C>0\) such that
\begin{align*}
&\int_{\mathbb R^N} u^{p+1}(t,x)\psi^2(x)\,dx
+\frac{mp}{2}
\iint_{\Omega_t}
e^{-\lambda(p+1)(t-s)}
u^{p+m-2}|\nabla u|^2\psi^2\,dx\,ds
\\
&\qquad\lesssim
C\iint_{\Omega_t}
e^{-\lambda(p+1)(t-s)}
u^{p+2}\psi^2\,dx\,ds
+
C_\delta \kappa^{-N}.
\end{align*}
Applying Lemma \ref{uLp+2} to the first term on the right-hand side, we obtain, for any \(\delta>0\),
\begin{align*}
&\int_{\mathbb R^N} u^{p+1}(t)\psi^2\,dx
+\frac{mp}{2}
\iint_{\Omega_t}
e^{-\lambda(p+1)(t-s)}
u^{p+m-2}|\nabla u|^2\psi^2\,dx\,ds
\\
&\qquad\lesssim
\delta
\sup_{x_0\in\mathbb R^N}
\iint_{\Omega_t}
e^{-\lambda(p+1)(t-s)}
\left|\nabla u^{\frac{p+m}{2}}\right|^2
\psi^2(x-x_0)\,dx\,ds
+
C_\delta\kappa^{-N}.
\end{align*}
Shifting in space, taking the supremum over \(x_0\in\mathbb R^N\), and choosing \(\delta>0\) sufficiently small, we obtain
\(
u\in L^\infty\big(0,T;L^{p+1}_{\mathrm{loc}}(\mathbb R^N)\big)
\), for any $p >1$ when $b>b_m$ and $\tau =1$.

\medskip

\noindent
\textbf{The case \(\tau=0\).}
Assume that \(1<m\le 2-\frac{2}{N}\) since $b>b_m =\frac{(2-m)N -2}{(2-m)N}$ it follows that $$0\le \frac{N(2-m)}{2}- 1 < \frac{b}{(\mu |\chi|- b)_+}$$ Now choose a $p'\in \left(\frac{N(2-m)}{2}- 1 , \frac{b}{(\mu |\chi|- b)_+}\right)$, then $b>b_{m, p'+1}$ and $u\in L^\infty([0,T]; L^{p'+1}_{loc}(\R^N))$ by Lemma 3.1. Moreover, from \eqref{tau0-eq6}, for every \(p>p'\) there exists a constant \(C>0\), independent of \(t\), \(\kappa\), and \(\varepsilon\), such that
\begin{align*}
&\int_{\mathbb R^N} u^{p+1}(t)\psi^2\,dx
+
c\iint_{\Omega_t}
e^{-(p+1)(t-s)}
u^{p+m-2}|\nabla u|^2\psi^2\,dx\,ds
\\
&\qquad\lesssim
C\iint_{\Omega_t}
e^{-(p+1)(t-s)}
u^{p+2}\psi^2\,dx\,ds
+
C_\delta\kappa^{-N}.
\end{align*}
Applying Lemma \ref{uLp+2} again to the first term on the right-hand side, we obtain, for every \(p>p'\) and every \(\delta>0\),
\begin{align*}
&\int_{\mathbb R^N} u^{p+1}(t)\psi^2\,dx
+
c\iint_{\Omega_t}
e^{-(p+1)(t-s)}
u^{p+m-2}|\nabla u|^2\psi^2\,dx\,ds
\\
&\qquad\lesssim
\delta
\sup_{x_0\in\mathbb R^N}
\iint_{\Omega_t}
e^{-(p+1)(t-s)}
\left|\nabla u^{\frac{p+m}{2}}\right|^2
\psi^2(x-x_0)\,dx\,ds
+
C_\delta\kappa^{-N}.
\end{align*}
Shifting in space and taking the supremum over \(x_0\in\mathbb R^N\), we get
\begin{align*}
&\sup_{x_0\in\mathbb R^N}
\int_{\mathbb R^N} u^{p+1}(t)\psi^2(x-x_0)\,dx
+
c\sup_{x_0\in\mathbb R^N}
\iint_{\Omega_t}
e^{-(p+1)(t-s)}
u^{p+m-2}|\nabla u|^2\psi^2(x-x_0)\,dx\,ds
\\
&\qquad\lesssim
\delta
\sup_{x_0\in\mathbb R^N}
\iint_{\Omega_t}
e^{-(p+1)(t-s)}
\left|\nabla u^{\frac{p+m}{2}}\right|^2
\psi^2(x-x_0)\,dx\,ds
+
C_\delta\kappa^{-N}.
\end{align*}
Choosing \(\delta>0\) sufficiently small, we obtain
\[
u\in L^\infty\big(0,T;L^{p+1}_{\mathrm{loc}}(\mathbb R^N)\big)
\]
for every \(p>p'\). As above, this yields local \(L^p\)-bounds for all \(p>1\).

The a priori estimates in Lemmas 2.5 and Lemma 2.6, together with the theory of quasilinear parabolic equations, can now be used to carry out a standard fixed-point argument. We refer to the proof of Proposition 1.1 in \cite{hassan2025global} for the details.
\end{proof}

\section{Global Existence of a Weak Solution}
In this section, we prove Theorems 1.1 and 1.2 using Proposition 3.1. First, we prove the existence of a global weak solution for any \(m>1, \ b>0\) using Simon--Dubinski\u{\i} compactness theorem in Subsection 4.1. Then, in Subsection 4.2, we use the global boundedness of solutions to the perturbed problem to establish the existence of a globally bounded weak solution for \(m\ge 2\), and also for \(1<m<2\) when \(b\) is sufficiently large.

\subsection{Existence of a Weak Solution/Proof of Theorem 1.1}
Consider the following perturbed version of \eqref{main-eq}:
\begin{equation}
\label{main-perturbed-eq2}
\begin{cases}
u_t = m\nabla\cdot \big((\eps+u)^{m-1}\nabla u\big) - \chi \nabla \cdot (u \nabla v) + u(a - b u)- \eps u^{\sigma},\quad & x \in \mathbb{R}^N, \,\, t>0, \\
\tau v_t = \Delta v - \lambda v + \mu u, \quad & x \in \mathbb{R}^N,\,\,  t>0,\\
u(0,x) = u_0(x),\,\, \tau v(0, x) = \tau v_0(x), \quad & x\in \R^N,
\end{cases}
\end{equation}
where \(\eps\in (0,1)\) and \(\sigma>2\). Here, $f(u) = u(a - b u)- \eps u^{\sigma} \le u(a_1-b_1u)$, for any $b_1>0$ and for some $a_1= a_1(\eps, a, b, b_1)>0$ . Assume that \(u_0\ge 0\), and when \(\tau=1\), also \(v_0\ge 0\). Suppose moreover that
\(
u_0\in C_{\rm unif}^{1+\alpha}(\R^N)
\)
for some \(\alpha\in(0,1)\), and, in the case \(\tau=1\),
\(
v_0\in C_{\rm unif}^{2+\alpha}(\R^N).
\)
Then, by Proposition (1.1) for each \(\eps\in(0,1)\), by choosing $b_1>b_{m,p}$ big enough, problem \eqref{main-perturbed-eq2} admits a unique nonnegative global classical solution \((u_\eps,v_\eps)\) of \eqref{main-perturbed-eq2} (Note that this by lemma 2.5 and Lemma 2.6 will satisfy all the regularity in the lemmas but depend on $\eps$ since $a_1$ depend on $\eps$). Now we would show that we can get some $\eps$ independent estimates of the solution.

 \medskip
First assume $1<m<2$. {\bf In the case \(\tau = 1\)}, take 
\begin{equation}\label{p0-eq2}
   p_0\in\Big[m, \min\{2,\frac{b}{\left(b-2C_{p_0+2, N}^{\frac{1}{p_0+2}}\mu|\chi|\right)_+}\}\Big)\qquad \text{and} \qquad \sigma=1+p_0,
\end{equation}
then
\[
\frac{b}{2}>\frac{p_0}{p_0+1}C_{p_0+2,N }^{\frac{1}{p_0 +2}}\mu |\chi|.
\]
Since
\[
\min_{r>0} \left(r+r^{-(p_0+1)} \left(\frac{\mu|\chi|p_0}{p_0+1}\right)^{p_0+2}A_{p_0}C_{p_0+2,N }\right)
=
\frac{p_0}{p_0+1}\left(C_{p_0+2,N}\right)^{\frac{1}{p_0+2}}|\chi|\mu,
\]
then for some \(r>0\), and \(\delta\) small enough,
\[
\frac{b}{2}-9\delta - r-r^{-(p+1)} \left(\frac{\mu|\chi|p_0}{p_0+1}\right)^{p_0+2}A_{p_0}C_{p_0+2,N }\ge 0.
\]
Hence, by the same argument as in the case \(m<\frac{2N}{N+2}\) in Section \ref{Lploc-est1}, we have
\begin{align}\label{eq:u'}
&\int_{\R^N} u_\eps^{p_0+1}(t)\psi^2 \, dx + c \iint_{\Omega_t} e^{-\lambda(p_0+1)(t-s)}  (\eps +u_\eps)^{p_0+m-2} |\nabla u_\eps|^2\psi^2 + \frac{b}{2} \iint_{\Omega_t} e^{-\lambda(p_0+1)(t-s)}u_\eps^{p_0+2}\psi^2\nonumber\\
& \lesssim  -\left(\frac{b}{2}-9\delta - r-r^{-(p+1)} \left(\frac{\mu|\chi|p_0}{p_0+1}\right)^{p_0+2}A_{p_0}C_{p_0+2,N }\right)\iint_{\Omega_t} e^{-\lambda(p_0+1)(t-s)}u_\eps^{p_0+2}\psi^2  \nonumber\\
&\quad- \eps\iint_{\Omega_t}e^{-\lambda(p_0+1)(t-s)}u_\eps^{p_0+\sigma}\psi^2+C_\delta \kappa^{-N} \nonumber\\
& \le C_\delta \kappa^{-N}.
\end{align}

\medskip

\noindent {\bf In the case \(\tau = 0\),} take \(\sigma= p_0 +1\), with  
\begin{equation}\label{p0-eq1}
    p_0 \in [m, \min\{2,\frac{b}{\left(b-2\mu|\chi|\right)_+}\})\subset(1,2).
\end{equation}
This gives that
\[
\frac{b}{2}>\frac{\mu |\chi|p_0}{p_0+1},
\]
then for \(\delta\) and \(\kappa\) small enough,
\[
\frac{b}{2}-4\delta - 3 C \kappa |\chi|- \frac{ \mu |\chi| p_0}{p_0+1}\ge 0.
\]
By the same argument as in the case \(m<2-\frac{2}{N}\) in Section 4.2, we obtain the following local estimate for the \(L^{p_0 +1}\)-norm of \(u_\eps\):
\begin{align}\label{tau0-pertub}
&\int_{\R^N} u_\eps^{p_0+1}(t)\psi^2 \, dx + c \iint_{\Omega_t} e^{-\lambda(p_0+1)(t-s)}  (\eps +u_\eps)^{p_0+m-2} |\nabla u_\eps|^2\psi^2 + \frac{b}{2} \iint_{\Omega_t} e^{-\lambda(p_0+1)(t-s)}u_\eps^{p_0+2}\psi^2\nonumber\\
&\qquad \lesssim  -\left(\frac{b}{2}-4\delta - 3 C \kappa |\chi|- \frac{ \mu |\chi| p_0}{p_0+1}\right)\iint_{\Omega_t} e^{-\lambda(p_0+1)(t-s)}u_\eps^{p_0+2}\psi^2  \nonumber\\
&\qquad\quad - \eps\iint_{\Omega_t}e^{-\lambda(p_0+1)(t-s)}u_\eps^{p_0+\sigma}\psi^2+C_\delta \kappa^{-N} \nonumber\\
&\qquad \le C_\delta \kappa^{-N}.
\end{align}

Hence, in both cases,
\(
u_\eps\in L^\infty(0, T; L^{p_0+1}_{loc}(\R^N))\cap L^{p_0+2}\big(0,T; L^{p_0+2}_{loc}(\R^N)\big)\),
and \(
\nabla (\eps +u_\eps)^{\frac{p_0 + m}{2}}\in L^2\big((0,T); L^2_{loc} (\R^N)\big).
\)
In particular, for \(p_0 = m\), we have
\(
\nabla (\eps+ u_\eps)^{m}\in L^2\big((0,T); L^2_{loc} (\R^N)\big).
\)
Next we prove the following important lemma.

\begin{lem}\label{cross-lm}
    Let $(u_\eps, v_\eps)$ be a classical solution to \eqref{main-perturbed-eq2}, $p_0$ be as in \eqref{p0-eq1} and \eqref{p0-eq2}, then there exists a constant $C >0$ such that 
\begin{equation}\label{eq:5.7}
\sup_{0\le t\le T}\|(\nabla v_\eps(t,\cdot))\psi\|_{L^{p_0 +1}} + \sup_{0\le t\le T}\|v_\eps(t,\cdot)\psi\|_{L^{p_0 +1}}\le C.
\end{equation}
Also, there exists a $C(T)$ but independent of $\eps$, and a $p\in(1, 2)$, such that
    \begin{align}\label{cross}
        \int_0^T\int_{\R^N} |u_\eps\nabla v_\eps|^p \psi + \int_0^T\int_{\R^N} |f(u_\eps)|^p\psi\le C(T), 
    \end{align}
    where $f(u_\eps) = au_\eps - bu_\eps^2 -\eps u_\eps^\sigma$.
\end{lem}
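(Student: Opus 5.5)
We treat the two estimates separately, in both cases starting from the $\eps$-independent bounds just obtained: $\sup_t\int u_\eps^{p_0+1}\psi^2\lesssim\kappa^{-N}$, $\iint_{\Omega_T}u_\eps^{p_0+2}\psi^2\lesssim C(T)\kappa^{-N}$, and the $\eps$-weighted bound $\eps\iint_{\Omega_T}e^{-\lambda(p_0+1)(t-s)}u_\eps^{p_0+\sigma}\psi^2\lesssim\kappa^{-N}$. The last one comes from keeping the term that was discarded in \eqref{eq:u'} and \eqref{tau0-pertub}.

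\textbf{Proof of \eqref{eq:5.7}.} We repeat the proof of property (1) of Lemma \ref{v-bound-lm} with $p=p'=p_0+1$. The only input used there was a local $L^p$ bound on $u_\eps$, and this is now supplied, uniformly in $\eps$, by the $L^\infty(0,T;L^{p_0+1}_{loc})$ estimate.
\begin{itemize}
\item For $\tau=1$ we use the Duhamel representation of $v_\eps\psi$ as in \eqref{vlp'} and \eqref{A-eq}. The time integrals $\int_0^t e^{-\lambda(t-s)}(t-s)^{-1/2}\,ds$ are bounded independently of $T$. Choosing $\kappa$ small absorbs the terms $\kappa\sup_t(\|\nabla v_\eps\psi\|_{L^{p_0+1}}+\|v_\eps\psi\|_{L^{p_0+1}})$, which gives \eqref{A-eq1} with $p=p_0+1$. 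Since $v_0\psi\in W^{1,p_0+1}$, the initial contribution is harmless.
\item For $\tau=0$ we use the resolvent identity \eqref{main-eq3-psi}, which leads to \eqref{nablav-est} with $p=p_0+1$. The right-hand side $\int u_\eps^{p_0+1}\psi$ is bounded since $\psi\le C\psi^2$ locally after shifting, or by using $\psi^2$ in place of $\psi$ in Lemma \ref{psi-lm}.
\end{itemize}
In both cases the constant is independent of $\eps$ and $T$.

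\textbf{Proof of \eqref{cross} for $u_\eps\nabla v_\eps$.} We choose $p\in(1,2)$ so that Hölder applies. Taking exponents $\frac{p_0+2}{p}$ on $u_\eps^p$ and $\frac{p_0+1}{p}$ on $|\nabla v_\eps|^p$, we need
\[
\frac{p}{p_0+2}+\frac{p}{p_0+1}\le 1 ,
\]
that is, $p\le \frac{(p_0+1)(p_0+2)}{2p_0+3}$. Since $p_0\ge m>1$, the right-hand side exceeds $\frac{6}{5}>1$, so such a $p\in(1,2)$ exists. We then apply Hölder on $[0,T]\times\mathbb R^N$ with the weight $\psi$ split between the factors, using $\psi\le1$ so that $\psi\le\psi^{p/(p_0+2)}\psi^{p/(p_0+1)}$. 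This gives
\[
\iint |u_\eps\nabla v_\eps|^p\psi\le \Big(\iint u_\eps^{p_0+2}\psi\Big)^{\frac{p}{p_0+2}}\Big(\iint|\nabla v_\eps|^{p_0+1}\psi\Big)^{\frac{p}{p_0+1}}|\mathrm{supp}|^{\cdots}.
\]
If strict inequality holds we use a finite measure factor, which is fine since $\int\psi<\infty$ and $T<\infty$. Both factors are bounded by $C(T)$ by the above and by \eqref{eq:5.7} integrated over $[0,T]$.

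\textbf{Proof of \eqref{cross} for $f(u_\eps)$.} The terms $|au_\eps|^p$ and $|bu_\eps^2|^p$ are controlled by $\iint u_\eps^{p_0+2}\psi$, since $2p\le p_0+2$ for $p<2$ (using $u^{2p}\le 1+u^{p_0+2}$). The delicate term is $\eps^p u_\eps^{\sigma p}$ with $\sigma=p_0+1$. Since $p_0<2$, we have $\sigma p=(p_0+1)p\le p_0+\sigma=2p_0+1$ provided $p\le\frac{2p_0+1}{p_0+1}$, and this bound is $>1$. Hence
\[
\eps^p u_\eps^{\sigma p}\le \eps^p\big(1+u_\eps^{2p_0+1}\big)\le \eps^p+\eps\,u_\eps^{p_0+\sigma},
\]
because $\eps^p\le\eps$. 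The integral of the last term is bounded by the kept term $\eps\iint u_\eps^{p_0+\sigma}\psi^2$ (after removing the weight $e^{-\lambda(p_0+1)(t-s)}\ge e^{-\lambda(p_0+1)T}$), which costs a $T$-dependent constant.

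\textbf{Main obstacle.} We expect this last step to be the main difficulty: the $-\eps u^\sigma$ term must be carried through the energy estimates rather than dropped, and $\psi$ versus $\psi^2$ must be reconciled. We handle the latter by applying the estimates with $\psi^{1/2}$ as the weight; this still satisfies Lemma \ref{psi-lm} with $\kappa/2$. Finally, we take $p$ to be the minimum of the two admissible ranges above.
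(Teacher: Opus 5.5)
Your proof is correct. For \eqref{eq:5.7} it is the same as the paper's: you rerun property (1) of Lemma \ref{v-bound-lm} with $p=p'=p_0+1$, now fed by the $\eps$-independent bound $u_\eps\in L^\infty(0,T;L^{p_0+1}_{loc})$.

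For the cross term $u_\eps\nabla v_\eps$ you take a genuinely different route.
\begin{itemize}
\item The paper applies Young's inequality with exponent $\frac{N(p_0+1)}{2N-p_0}$. It pairs only the $L^\infty(0,T;L^{p_0+1}_{loc})$ bound on $u_\eps$ with an improved integrability $\nabla v_\eps\in L^{\frac{N(p_0+1)}{N-p_0}}$, obtained from the semigroup gradient estimate \eqref{Lp Estimates-3}.
\item You use H\"older's inequality with the space-time bound $u_\eps\in L^{p_0+2}_{loc}$, which the logistic damping already provides. For $\nabla v_\eps$ you need nothing beyond \eqref{eq:5.7}.
\end{itemize}
Your version needs no extra regularity gain and its exponent is dimension-free. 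As written, the paper's exponents require $N>p_0$, so they are meaningless for $N=1$. They also give $p<2$ only when $p_0<\frac{3N}{N+2}$.

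For $f(u_\eps)$ the paper is simpler than you. Since $\eps<1$, $|f(u_\eps)|\le C+Cu_\eps^{\sigma}$. Taking $p=\frac{p_0+2}{p_0+1}$ gives $\sigma p=p_0+2$, so the $L^{p_0+2}$ bound alone suffices. Your step of retaining the dissipative term $\eps\iint u_\eps^{p_0+\sigma}\psi^2$ is valid but unnecessary, so the ``main obstacle'' you identify is not really one.

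There is one small slip. The inequality $2p\le p_0+2$ does not hold for every $p<2$; it needs $p\le 1+\frac{p_0}{2}$. Both of your final upper bounds, $\frac{(p_0+1)(p_0+2)}{2p_0+3}$ and $\frac{2p_0+1}{p_0+1}$, already lie below $1+\frac{p_0}{2}$. Your final choice of $p$ is therefore unaffected.

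Your covering/shifting remark for passing between the weights $\psi$, $\psi^2$ and $\psi^{p_0+1}$ is also needed in the paper's argument, which does not address it.
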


\begin{proof}
 Let $p_0$ be as in \eqref{p0-eq1} and \eqref{p0-eq2} for $\tau = 0$ and $\tau = 1$ respectively. To prove \eqref{eq:5.7}, consider the proof of property 1 in Lemma \ref{v-bound-lm}. Using semigroup arguments in \eqref{main-eq3-psi} to \eqref{eq:3.11} for $\tau =0$ and \eqref{v-phi-eq} to \eqref{A-eq1} for $\tau =1$ with $p=p'=p_0+1$, together with the fact that $u_\eps\in  L^\infty(0, T; L^{p_0+1}_{loc}(\R^N))$ gives that for $\kappa$ sufficiently small,
\begin{equation}
\label{A-eq1'}
\sup_{0\le t\le T}\|(\nabla v_\eps(t,\cdot))\psi\|_{L^{p_0+1}} + \sup_{0\le t\le T}\|v_\eps(t,\cdot)\psi\|_{L^{p_0+1}}\le C_\kappa.
\end{equation}

Now for \eqref{cross}, by Young's inequality,
\begin{align}
    \int_0^T\int_{\R^N} |u_\eps\nabla v_\eps|^{\frac{N(p_0+1)}{2N-p_0}}\psi\le \int_0^T\int_{\R^N} u_\eps^{p_0 +1}\psi + \int_0^T\int_{\R^N}|\nabla v_\eps |^{\frac{N(p_0 +1)}{N-p_0}}\psi
\end{align}
Using the semigroup estimate \eqref{Lp Estimates-3}, we can get a $C(T)>0$ such that 
\begin{align}
   \int_0^T\int_{\R^N}|\nabla v_\eps |^{\frac{N(p_0 +1)}{N-p_0}}\psi\le  C(T) + \int_0^T\int_{\R^N} u_\eps^{p_0 +1}\psi 
\end{align}
Combining these inequalities and the fact that $u_\eps\in  L^\infty(0, T; L^{p_0+1}_{loc}(\R^N))$ gives that we can get a $1<p<2$ such that
\begin{align*}
    \int_0^T\int_{\R^N} |u_\eps\nabla v_\eps|^p\psi\le C(T). 
\end{align*}
Also, it is clear by Young's inequality, that we can find a $C>0$ such that $|f(u_\eps)|\le C+ Cu^\sigma$. Hence
$$\int_0^T\int_{\R^N}|f(u_\eps)|^{\frac{p_0+2}{\sigma}} \psi\lesssim CT\k^{-N} + C\int_0^T\int_{\R^N} u_\eps^{p_0 +2}\psi \lesssim C(T).$$
The last inequality follows since $u\in L^{p_0+2}((0,T); L^{p_0+2}_{loc}(\R^N))$. Hence for $p =\frac{p_0+2}{\sigma}\in(1,2)$, there exists a constant $C(T)>0$ such that 
$$\int_0^T\int_{\R^N}|f(u_\eps)|^p\psi\lesssim C(T).$$
\end{proof}

Note that for $m\ge 2$, Lemma 2.5 and Lemma 2.6, together with Lemma 3.1, gives a better regularity than the ones in the above lemma. Now we give the proof of Theorem 1.1

 \begin{proof}[Proof of Theorem 1.1]
Let the assumption of Theorem \ref{main-thm1} holds. Let $\tau \in \{0, 1\}$, $\eta\in C_c^\infty(\R^N)$ be non-negative with unit total mass. For any $\eps\in (0,1)$, define $\eta_\eps(x)=\eps^{-N}\eta(\eps^{-1}x)$, and let
$
u_{0,\eps}=u_0*\eta_\eps$ and 
$\tau v_{0,\eps}=\tau v_0*\eta_\eps$. Consequently, for any $p\geq 1$, we have
\[
\|\tau (v_{0,\eps}-v_0)\|_{L^{p}_{\rm loc}},\, \|\tau(\nabla v_{0,\eps}-\nabla v_0)\|_{L^{p}_{\rm loc}},\, \|u_{0,\eps}-u_0\|_{L^{p}_{\rm loc}}\to 0
\quad \text{ as }\eps\to 0.
\]
Moreover, $u_{0,\eps}\in L^\infty(\R^N)$, $\tau v_{0,\eps}\in W^{1,\infty}$, and
$$
\|u_{0,\eps}\|_\infty\le \|u_0\|_\infty,\quad \|\tau v_{0,\eps}\|_{W^{1,\infty}}\le \|\tau v_0\|_{W^{1,\infty}}.
$$ 
Let $(u_\eps, v_\eps)$ be a classical solution to \eqref{main-perturbed-eq2} with initial $u_{0\eps}, \tau v_{0, \eps}$ in place of $u_0$ and $\tau v_0$. 

First, let $1<m<2$ and $p_0$ be as in \eqref{p0-eq1} and \eqref{p0-eq2}. From $u_\eps \in L^\infty(0, T; L^{p_0+1}_{loc}(\R^N))\cap L^{p_0+2}(0,T); L^{p_0+2}_{loc}(\R^N)) $,  and $\nabla (\eps +u_\eps)^{m}\in L^2((0,T); L^2_{loc} (\R^N))$
up to a subsequence, we have for some $p\in(1, 2)$ there exists a $u\in L^p(0,T; L^p_{loc} (\R^N))$ with $\nabla u^m \in L^p(0,T; L^p_{loc} (\R^N))$, such that 
\begin{align}\label{um-bound}
u_\eps \rightharpoonup u, &\qquad in \quad L^p(0,T; L^p_{loc} (\R^N)),\nonumber\\
\nabla (\eps +u_{\eps})^m \rightharpoonup \nabla u^m &\qquad in\quad L^p(0,T; L^p_{loc} (\R^N)).
\end{align}
Also from equation \eqref{cross} in Lemma~\ref{cross-lm}, up to a subsequence, we can find a $z\in L^p(0,T; L^p_{loc} (\R^N))$ such that
\begin{align}\label{uepsnablav}
u_{\eps }\nabla v_{\eps } \rightharpoonup z 
&\qquad \text{in } L^p(0,T; L^p_{\mathrm{loc}} (\R^N)).
\end{align}

Fix \(R>0\) and write \(Q_{R,T}:=(0,T)\times B_R\).  Now choose \(p_0=m\), then $u_\varepsilon$ is bounded in
\( 
L^\infty\big(0,T;L^{m+1}(B_R)\big)
\cap L^{m+2}(Q_{R,T}),
\)
and
\(
\nabla (\varepsilon+u_\varepsilon)^m
\ \text{is bounded in }\ 
L^2(Q_{R,T}).
\)
Moreover, since \(m<2\), we have \(2m < m+2\), and therefore
\[
u_\varepsilon^m ,\qquad |\nabla u_\varepsilon^m| \ \text{is bounded in } \ L^2(Q_{R,T}).
\]
This implies
\[
u_\varepsilon^m \ \text{is bounded in } \ L^2\big(0,T;W^{1,2}(B_R)\big).
\]
Now define the seminorm
\[
\mathcal M(B_R):=\{w\ge 0:\ w^m\in W^{1,2}(B_R)\},
\qquad
\|w\|_{\mathcal M(B_R)}:=\|w^m\|_{W^{1,2}(B_R)}^{1/m}.
\]
Then
\(
\{u_\varepsilon\}_\varepsilon
\ \text{is bounded in }\ 
L^{2m}\big(0,T;\mathcal M(B_R)\big),
\)
because
\[
\int_0^T \|u_\varepsilon(t)\|_{\mathcal M(B_R)}^{2m}\,dt
=
\int_0^T \|u_\varepsilon^m(t)\|_{W^{1,2}(B_R)}^2\,dt
\le C(R,T).
\]
Next, we claim that
\[
\mathcal M(B_R)\hookrightarrow\hookrightarrow L^{2m}(B_R).
\]
Indeed, if \(\{w_n\}\subset \mathcal M(B_R)\) is bounded, then
\(
z_n:=w_n^m
\)
is bounded in \(W^{1,2}(B_R)\). By the Rellich theorem, up to a subsequence, $z_n\to z$ strongly in $L^2(B_R)$.
Let \(w:=z^{1/m}\). Since for \(a,b\ge 0\),
\(
|a-b|^{2m}\le |a^m-b^m|^2
\), 
it follows that
\[
\|w_n-w\|_{L^{2m}(B_R)}^{2m}
\le
\|z_n-z\|_{L^2(B_R)}^2\to 0.
\]
Hence the embedding is compact.

Let \(p\in(1,2)\) be the exponent from Lemma \ref{cross-lm}, and let \(q\) be its H\"older conjugate. From \eqref{cross}, we have that 
\(
u_\varepsilon\nabla v_\varepsilon\) and 
\(
g(u_\varepsilon)
\) are bounded in \( 
L^p(Q_{R,T}).\) This together with the fact that \(\nabla(\varepsilon+u_\varepsilon)^m\) is bounded in \(L^2(Q_{R,T})\) gives that for
\(\zeta\in W_0^{1,q}(B_R)\), we obtain
\begin{align*}
\left|\int_{B_R} (u_\varepsilon)_t \zeta \,dx\right|
&\le
\left(
\|\nabla(\varepsilon+u_\varepsilon)^m\|_{L^p(B_R)}
+
\|u_\varepsilon\nabla v_\varepsilon\|_{L^p(B_R)}
+
\|g(u_\varepsilon)\|_{L^p(B_R)}
\right)
\|\zeta\|_{W_0^{1,q}(B_R)} .
\end{align*}
It follows that
\(
\partial_t u_\varepsilon
\ \text{is bounded in }\ 
L^p\big(0,T;(W_0^{1,q}(B_R))^*\big).
\)
On the other hand,
\[
L^{2m}(B_R)\hookrightarrow (W_0^{1,q}(B_R))^*
\]
continuously. Indeed, since \(q>2\), one has \((2m)'<2<q\), and therefore
\[
\|\phi\|_{L^{(2m)'}(B_R)}
\le C \|\phi\|_{W_0^{1,q}(B_R)}
\qquad \text{for all } \phi\in W_0^{1,q}(B_R).
\]
Therefore all assumptions of the Simon--Dubinski\u{\i} compactness theorem (see \cite{chen2012two, chen2014note}) are satisfied with
\[
\mathcal M(B_R)\hookrightarrow\hookrightarrow L^{2m}(B_R)
\hookrightarrow (W_0^{1,q}(B_R))^*,
\]
and consequently \(\{u_\varepsilon\}_\varepsilon\) is relatively compact in
\(
L^{2m}\big(0,T;L^{2m}(B_R)\big).
\)
 Hence, along a subsequence,
\[
u_\varepsilon \to u
\qquad\text{strongly in }L^{2m}\big(0,T;L^{2m}(B_R)\big).
\]
Finally, by a diagonal argument over \(R=1,2,\dots\), we obtain
\(
u_\varepsilon \to u
\) strongly in $L^{2m}_{\mathrm{loc}}\big((0,T)\times\R^N\big)$.
Now from \eqref{eq:5.7} with \(p_0=m\), we have
\begin{align*}
&v_\varepsilon \rightharpoonup v
\qquad\text{weakly in }L^{m+1}_{\mathrm{loc}}\big((0,T)\times\R^N\big)\\
  &\nabla v_\varepsilon \rightharpoonup \nabla v
\qquad\text{weakly in }L^{m+1}_{\mathrm{loc}}\big((0,T)\times\R^N\big).  
\end{align*}
Since
\(
(m+1)'=\frac{m+1}{m}<2m,
\)
the above strong convergence implies
\[
u_\varepsilon \to u
\qquad\text{strongly in }L^{(m+1)'}_{\mathrm{loc}}\big((0,T)\times\R^N\big).
\]
Therefore, for every \(\Phi\in C_c^\infty((0,T)\times\R^N;\R^N)\),
\begin{align*}
\int_{\R^N} (u_\varepsilon\nabla v_\varepsilon-u\nabla v)\cdot \Phi
&=
\int_{\R^N} \nabla v_\varepsilon\cdot (u_\varepsilon-u)\Phi
+
\int_{\R^N} (\nabla v_\varepsilon-\nabla v)\cdot u\Phi
\to 0.
\end{align*}
Hence,
\(
u_\varepsilon\nabla v_\varepsilon \to u\nabla v\) in 
\(\mathcal D'((0,T)\times\R^N).
\) 
In particular, if
\(
u_\varepsilon\nabla v_\varepsilon \rightharpoonup z\)
weakly in \(L^p_{\mathrm{loc}}((0,T)\times\R^N)
\), 
then necessarily
\[
z=u\nabla v.
\]
Hence, after passing limit as $\eps \to 0$ along a subsequence, we get that \((u,v)\) is a weak solution of \eqref{main-eq} on \([0,T]\times \R^N\) satisfying,
\[
u\in L^\infty\big(0,T;L^{m+1}_{\mathrm{loc}}(\R^N)\big)
\cap L^{m+2}_{\mathrm{loc}}\big((0,T)\times \R^N\big),
\quad
\nabla u^m\in L^2_{\mathrm{loc}}\big((0,T)\times \R^N\big), \quad v,\ \nabla v \in L^\infty\big(0,T;L^{m+1}_{\mathrm{loc}}(\R^N)\big),
\]
and
\[
u\nabla v \in L^p_{\mathrm{loc}}\big((0,T)\times \R^N\big)
\]
for some \(p\in(1,2)\).
When $m\ge 2$, the approximate solutions enjoy better regularity: 
$u_\eps$ is uniformly bounded and uniformly H\"older continuous with bounds independent of $\eps$. 
Consequently, up to a subsequence, $u_\eps$ converges strongly, locally uniformly in 
$(0,\infty)\times \R^N$. Moreover, $v_\eps$ and $\nabla v_\eps$ are uniformly bounded. 
These estimates yield the existence of a globally bounded weak solution in this case. 
The details are given below in the proof of Theorem \ref{main-thm}.
\end{proof}

\subsection{Existence of a Bounded Weak Solution/Proof of Theorem 1.2}
In this subsection we prove the existence of a globally bounded non-negative weak solution. 
\begin{proof}[Proof of Theorem 1.2]
Let the assumption of Theorem 1.2 holds. Let $(u_\eps,v_\eps)$ be classical solution of \eqref{main-perturbed-eq} with $f(u) = au - bu^2$ and initials $u_{0,\eps}$ and $\tau v_{0,\eps}$ as in Theorem 1.1 (this  exist uniquely from Proposition \ref{main-perturbed-thm}). For any $T>0$, by Lemma \ref{v-bound-lm} and Lemma \ref{apriori-prop}, there exists a constant  $C>0$, independent of $\eps$ and $T>0$ such that
\beq\lb{678}
\|u_{\eps}\|_{L^\infty(\Omega_{T})}  \le C,\qquad \|v_\eps\|_{W^{1,\infty}(\Omega_{T})}  \le C.
\eeq
Therefore, after passing $\eps\to 0$ along a subsequence, 
we can find $v\in L^\infty(0,T;W^{1,\infty}(\R^N))$ and  $u\in L^\infty(\Omega_{T})$ such that
\beq\lb{convergence}
\begin{aligned}
&v_\eps\rightharpoonup v,\quad
\nabla v_\eps\rightharpoonup \nabla v,
\quad u_\eps \rightharpoonup u &&\quad\text{ in }L^p_{\rm loc}(\Omega_{T})\text{ for all }p\geq 1. 
\end{aligned}
\eeq
Moreover, Lemma \ref{holdercty-lem},   gives that $u_\eps$ is uniformly H\"{o}lder continuous in $[T',\infty]\times\R^N$ with fixed $T'>0$, and the H\"{o}lder norm is independent of $\eps\in (0,1)$. This shows that along a subsequence,
\begin{equation*}
    u_\eps\to u \quad \text{point wise locally uniformly in } (0,\infty)\times\R^N.
\end{equation*}
Thus,  $u\in C((0,T); L^2_{loc}(\R^N))$. This convergence together with \eqref{convergence} implies that 
\[
u_\eps \nabla v_\eps \rightharpoonup u\nabla v\quad\text{ in }L^p_{\rm loc}(\Omega_{T})\text{ for all }p\geq 1.
\]
Also, by property (3) in Lemma \ref{v-bound-lm}, with $p=m$, there exists a constant $M>0$, independent of $\eps$ and $T$ such that 
\[
\sup_{x_0\in\R^N}\iint_{[0,T]\times B_1(x_0)}|\nabla (u_\eps+\eps)^m(t,x)|^2dxdt\leq M.
\]
Therefore, after passing $\eps\to 0$ along a subsequence, 
we can find $v\in L^\infty(0,T;W^{1,\infty}(\R^N))$ and  $u\in L^\infty(\Omega_{T})$ such that
\beq
\begin{aligned}
&u_\eps\rightharpoonup u,\quad v_\eps\rightharpoonup v,\quad
\nabla v_\eps\rightharpoonup \nabla v,
\quad u_\eps \nabla v_\eps \rightharpoonup u\nabla v &&\quad\text{ in }L^p_{\rm loc}(\Omega_{T})\text{ for all }p\geq 1,
\\
&\nabla(u_\eps +\eps)^{m}\rightharpoonup \nabla u^{m} &&\quad\text{ in }L^2_{\rm loc}(\Omega_{T}).    
\end{aligned}
\eeq
Hence, $(u,v)$ is a global weak solution of \eqref{main-eq}, and they stay uniformly bounded for all time. Finally, since all the estimates in Lemma \ref{v-bound-lm} holds independent of $\eps$, letting $\eps$ goes to zero, we obtain that all the regularity properties of the solution $(u, v)$ in Theorem \ref{main-thm} holds. 
\end{proof}

\section{Regularity and Uniqueness of Bounded Weak Solutions}

In this section, we prove Theorem \ref{uniqueness}, which establishes the H\"older continuity and uniqueness of weak solutions that are H\"older continuous up to the initial time.
 
\subsection{H\"older Continuity/Proof of Theorem 1.3(1)}
First we established that any globally bounded weak solution of \eqref{main-eq} is H\"older continuous, and the H\"older continuity hold up to initial time for H\"older continuous initials.

\begin{proof}[Proof of Theorem 1.3(1)]
    Let $(u, v)$ be a globally bounded weak solution of \eqref{main-eq} satisfying the assumptions in Theorem 1.3(1). Such a weak solution exists by Theorem 1.2. Therefore, interior H\"older continuity of $u$ follows from  \cite[Theorems 1.3]{black2026refining}. Moreover, if \(u_0\in C^\alpha(\R^N)\), then
\(
u\in C^\alpha([0,T]\times\R^N)
\)
by \cite[Theorems 1.9]{black2026refining}.

For \(\tau=0\), it follows that
\[
v=(\lambda I-\Delta)^{-1}(\mu u),
\]
and hence
\(
v\in C^{2+\alpha}([0,T]\times\R^N)
\)
uniformly in \(\varepsilon\); see also \cite[Theorem 6.2]{gilbarg1998elliptic}.

Now consider the case \(\tau=1\). If \(v_0\in C^{2+\alpha}(\R^N)\), then by \cite[Theorem 5.1, Chapter IV]{ladyzhenskaya1968linear},
\[
v\in C^{1+\alpha/2,\,2+\alpha}([0,T]\times\R^N).
\]
 This completes the proof of the the first part of Theorem 1.3(1).
\end{proof}

Next we establish uniqueness for weak solutions that are H\"older continous up to initial, for $1<m\le 3$. . Our argument is based on a duality method similar to that used in \cite[Theorem 1.1]{hassan2025global}.

\subsection{Uniqueness/Proof of Theorem 1.3(2)}
Let \((u_1,v_1)\) and \((u_2,v_2)\) be two bounded nonnegative weak solutions of \eqref{main-eq} with the same initial data
\(u_0\in C^\alpha(\mathbb R^N)\) and \(\tau v_0\in C^{2+\alpha}(\mathbb R^N)\). Assume that \(u_i\) is uniformly H\"older continuous in \(\Omega_T\) and that \(v_i\) is uniformly \(C^2\) in \(\Omega_T\), for \(i=1,2\). By Theorems~1.2 and~1.3(1), such weak solutions exist whenever \(b>b_m\). We define
\beq\lb{4.u}
\bar u := u_2-u_1,\qquad \bar v := v_2-v_1,\qquad \tilde u := u_2+u_1.
\eeq
Our goal is to show that \(\bar u=0\) and \(\bar v=0\). 
First, we need an estimate for the adjoint problem associated with the equation satisfied by \(\bar u\psi\). Let
\beq\lb{astar}
 a^*(t,x) :=
\left(\frac{u_2^m-u_1^m}{u_2-u_1}\right)(t,x).
\eeq
Consider extending the function $a^*(t,x)+\delta$ to $t\in [0,\infty)$ by
$$
a^{\delta,*}(t,x):=
a^*( \max\{0,\min\{T^*,t\}\},x)+\delta.
$$
Thus, it is clear that
\[
\int_0^{T^*} \int_{\R^N} |a^*-a^{\delta,*}|\bar u^2 \varphi^2 \le C \delta^2,
\]
where
\(
C:=\int_0^{T^*}\bar u^2\psi^2. 
\) Let $\beta\in C_c^\infty(\R^{N+1})$ be non-negative with compact support and unit total mass, and set
$$
a^{\delta,\epsilon}(t,x):=\frac{1}{\epsilon^{N+1}} \int_{\R}\int_{\R^N} \beta\Big(\frac{1}{\epsilon} (t-s),\frac{1}{\epsilon} (x-y)\Big )a^{\delta,*}(s,y)\,dy\, ds.
$$
Since $a^{\delta,*}$ is uniformly continuous on $\R\times\R^N$, 
\(
\lim_{\epsilon\to 0} a^{\delta,\epsilon}=a^{\delta,*}
\) uniformly on $\R\times\R^N$.
Hence, after taking $\epsilon>0$ sufficiently small,  
$a^\delta(t,x):=a^{\delta,\epsilon}$  is a smooth approximation of \(a^*\) satisfying
\beq\lb{4.K}
\max\{\delta, a^*\} \le a^\delta \le \max_{i=1,2}\left\{m(\|u_i\|_\infty+1)^{m-1}\right\}=:K \quad \text{and}\quad
 \int_0^{T^*} \int_{\R^N}|a^*-a^\delta|^2 \bar u^2 \psi^2\le C\delta^2.
\eeq
Next, for some \(T>0\) and nonnegative \(\xi(t,x)\) as above, consider the following adjoint problem:
\begin{equation}\label{varphi-eq}
\begin{cases}
\varphi_t + a^\delta \Delta\varphi + g^\delta\cdot\nabla\varphi + f^\delta\varphi + \xi = 0,\\
\varphi(T,x)=0,
\end{cases}
\end{equation}
where
\begin{equation}\label{gfeqn}
    g^\delta := 2a^\delta \frac{\nabla\psi}{\psi} + \chi\nabla v_2,
    \qquad
    f^\delta := a-b\tilde u + a^\delta \frac{\Delta \psi}{\psi}+ \chi \nabla v_2\cdot \frac{\nabla\psi}{\psi}.
\end{equation}
By the regularity of the solutions and the smoothness of \(\psi\), both \(g^\delta\) and \(f^\delta\) are bounded on \([0,T]\times\R^N\). The following estimate for the solution of the adjoint problem \eqref{varphi-eq} follows from Lemma 6.1 in \cite{hassan2025global}; the proof relies only on the boundedness of the functions \(u_i\) and the finiteness of \(\|v_2\|_{C^2}\) which holds by our assumption.

\begin{lem}\label{varphilem}
Fix any \(\kappa\in (0,\frac12)\). Assume that either \(a,b>0\) and \(1<m\le 3\) or $b=0$ and $m>1$.
For any \(\eta \in (0,\tfrac18)\), there exists \(T^*=T^*(\eta)>0\) such that, for every \(T\in(0,T^*]\), if \(\varphi=\varphi^{\delta,\xi}\) is the solution of \eqref{varphi-eq} on \([0,T]\times\R^N\), then
\begin{align}\label{gradvarphi}
&\int_t^T\!\!\int_{\mathbb{R}^N} a^\delta |\Delta\varphi^{\delta,\xi}|^2
 + \int_t^T\!\!\int_{\mathbb{R}^N} |\nabla\varphi^{\delta,\xi}|^2
 + \int_t^T\!\!\int_{\mathbb{R}^N} |\varphi^{\delta,\xi}|^2 \le \eta \Big(
      \int_t^T\!\!\int_{\mathbb{R}^N} |\xi|^2
    + \int_t^T\!\!\int_{\mathbb{R}^N} |\nabla\xi|^2
    \Big)
\end{align}
for every \(t\in[0,T]\).
\end{lem}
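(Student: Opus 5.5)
We prove the estimate by an energy argument for the backward problem \eqref{varphi-eq}, working on a short time interval. Set $s=T-t$, so that $\phi(s):=\varphi(T-s)$ solves the forward nondivergence problem $\phi_s=a^\delta\Delta\phi+g^\delta\cdot\nabla\phi+f^\delta\phi+\xi$ with $\phi(0)=0$. By \eqref{4.K} we have $\delta\le a^\delta\le K$. By \eqref{gfeqn}, Lemma \ref{psi-lm} and the assumed bounds on $u_i$ and $\|v_2\|_{C^2}$, the coefficients $g^\delta$ and $f^\delta$ are bounded uniformly in $\delta$. Since $a^\delta$ is smooth with $a^\delta\ge\delta$, classical theory gives a smooth solution decaying in $x$ whenever $\xi$ is in $H^1$, so all integrations by parts below are justified for fixed $\delta$. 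The task is to obtain constants independent of $\delta$; this is why the estimate carries the weight $a^\delta$ on $|\Delta\varphi|^2$ and why $T^*$ must be small.

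\emph{Step 1 ($H^1$ energy).} Multiply the forward equation by $-\Delta\phi$ and integrate over $\R^N$:
\[
\tfrac12\tfrac{d}{ds}\|\nabla\phi\|_2^2+\int a^\delta|\Delta\phi|^2=-\int (g^\delta\cdot\nabla\phi)\Delta\phi-\int f^\delta\phi\Delta\phi+\int\nabla\xi\cdot\nabla\phi .
\]
The $\xi$ term is bounded by $\frac12(\|\nabla\xi\|_2^2+\|\nabla\phi\|_2^2)$; pairing against $\nabla\phi$ is what lets us avoid dividing by $a^\delta$.

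\emph{Step 2 (the degenerate drift terms, the main obstacle).} The terms $\int (g^\delta\cdot\nabla\phi)\Delta\phi$ and $\int f^\delta\phi\Delta\phi$ cannot be absorbed by $\int a^\delta|\Delta\phi|^2$ when $a^\delta$ is near zero. Integrate by parts instead:
\[
\int f^\delta\phi\Delta\phi=-\int f^\delta|\nabla\phi|^2-\int\phi\nabla f^\delta\cdot\nabla\phi ,
\]
\[
\int (g^\delta_j\partial_j\phi)\partial_{ii}\phi=-\int\partial_ig^\delta_j\,\partial_j\phi\,\partial_i\phi+\tfrac12\int(\nabla\cdot g^\delta)|\nabla\phi|^2 .
\]
These require bounds on $\nabla g^\delta$ and $\nabla f^\delta$, and they contain $\nabla a^\delta$ and $\nabla\tilde u$, which are not bounded. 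We handle them as follows.
\begin{itemize}
\item For the part $2a^\delta\nabla\psi/\psi$ of $g^\delta$, we do not integrate by parts. Using $|\nabla\psi|\le\kappa\psi$, we estimate $\int a^\delta|\nabla\phi||\Delta\phi|\le\frac14\int a^\delta|\Delta\phi|^2+K\int|\nabla\phi|^2$; similarly for $a^\delta\Delta\psi/\psi$ in $f^\delta$.
\item For $\chi\nabla v_2$ we use $v_2\in C^2$.
\item For $b\tilde u\phi\Delta\phi$ we use $\tilde u\le C$ and rewrite $b\tilde u\,\Delta\phi\,\phi$ via $\int a^\delta$-weighted Young inequalities together with the relation $\nabla u_i^m\in L^2_{\rm loc}$. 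Concretely, $|\nabla\tilde u|$ is controlled through $\nabla u_i^m=m u_i^{m-1}\nabla u_i$ and the bound $a^\delta\ge a^*\gtrsim (u_1+u_2)^{m-1}$. After Young's inequality, the terms with $u$ raised to the power $3-m\ge0$ remain bounded precisely when $m\le3$. This is where the hypothesis $1<m\le3$ enters, and for $b=0$ the term is absent, which gives the case $m>1$.
\end{itemize}
We expect this step to be the hardest, and would first try the weighting $\int b\,\nabla\tilde u\,\phi\cdot\nabla\phi$ with $|\nabla\tilde u|\le C\tilde u^{1-m}|\nabla\tilde u^m|$ compensated by $a^\delta$.

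\emph{Step 3 ($L^2$ energy and Gronwall).} Multiply the equation by $\phi$ to get
\[
\tfrac12\tfrac{d}{ds}\|\phi\|_2^2\le\int a^\delta\phi\Delta\phi+C\|\phi\|_2^2+C\|\nabla\phi\|_2^2+\|\xi\|_2^2 ,
\]
and bound $\int a^\delta\phi\Delta\phi$ by $\frac14\int a^\delta|\Delta\phi|^2+K\|\phi\|_2^2$. Adding this to the $H^1$ estimate, and writing $E=\|\phi\|_2^2+\|\nabla\phi\|_2^2$, we get
\[
E'+\int a^\delta|\Delta\phi|^2\le C_0E+\|\xi\|_2^2+\|\nabla\xi\|_2^2 ,
\]
with $C_0$ independent of $\delta$. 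Gronwall's inequality with $E(0)=0$ gives $\sup_{[0,s]}E\le e^{C_0s}\int_0^s(\|\xi\|_2^2+\|\nabla\xi\|_2^2)$. Integrating in time then yields the left-hand side of \eqref{gradvarphi} bounded by $(1+C_0)T e^{C_0T}\int(\|\xi\|^2+\|\nabla\xi\|^2)$. Choosing $T^*(\eta)$ so that $(1+C_0)T^*e^{C_0T^*}\le\eta$ gives \eqref{gradvarphi} for all $t\in[0,T]$ and all $T\le T^*$.
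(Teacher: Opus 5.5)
\paragraph{Overall assessment.} Your overall route is consistent with what the paper indicates: forward-in-time $L^2$ and $H^1$ energy estimates, with $\delta$-independent constants and smallness coming from the short time interval. The paper gives no proof; it defers to Lemma~6.1 of \cite{hassan2025global} and notes that only the boundedness of $u_i$ and of $\|v_2\|_{C^2}$ is used. However, your final step does not give the smallness claimed for the weighted Laplacian term.

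\paragraph{The gap in the final step.} Integrate $E'+\int a^\delta|\Delta\phi|^2\le C_0E+F$ over $[0,S]$, where $F:=\|\xi\|_2^2+\|\nabla\xi\|_2^2$. This gives
\[
\int_0^S\!\!\int_{\R^N} a^\delta|\Delta\phi|^2\le C_0\int_0^S E+\int_0^S F\le\bigl(1+C_0Se^{C_0S}\bigr)\int_0^S F ,
\]
and the coefficient $1$ does not tend to $0$ with $S$. Your bound $(1+C_0)Te^{C_0T}$ silently drops it. The loss comes from applying Young's inequality to the forcing terms $\int\nabla\xi\cdot\nabla\phi$ and $\int\xi\phi$, which puts $F$ on the right with an $O(1)$ factor.

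To fix this, use the Gronwall bound only to obtain $\int_0^S E\le Se^{CS}\int_0^S F$. Then return to the time-integrated $L^2$ plus $H^1$ identity, keep the forcing terms intact, and bound them by Cauchy--Schwarz in space-time:
\[
\int_0^S\!\!\int(|\nabla\xi||\nabla\phi|+|\xi||\phi|)\le\Bigl(\int_0^S F\Bigr)^{1/2}\Bigl(\int_0^S E\Bigr)^{1/2}\le\sqrt{S}e^{CS/2}\int_0^S F.
\]
This gives a coefficient of order $\sqrt{T}+T$ for all three terms, after which $T^*(\eta)$ can be chosen. The uniqueness proof only needs the Laplacian term bounded uniformly in $\delta$, since it is multiplied by $(C\delta)^{1/2}$, but the lemma as stated asserts smallness. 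Also, your absorbed fractions $\frac14$ add up to exactly $1$; use $\frac18$ instead.

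\paragraph{The $b\tilde u$ term in Step 2.} Your treatment of $b\tilde u\,\phi\Delta\phi$ is muddled, and the route you say you would try first fails. Integrating by parts produces $\int b\,\phi\,\nabla\tilde u\cdot\nabla\phi$. But $\nabla\tilde u$ is not bounded: you only have $\nabla u_i^m\in L^2_{\rm loc}$, and $\tilde u^{1-m}$ is singular where $\tilde u=0$. Moreover, there is no $a^\delta$ weight on $\phi\nabla\phi$ to compensate.

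No derivative of $\tilde u$ is needed. Apply Young's inequality pointwise:
\[
b\tilde u|\phi||\Delta\phi|\le\frac18a^\delta|\Delta\phi|^2+2b^2\frac{\tilde u^2}{a^\delta}\phi^2 .
\]
Then use
\[
a^\delta\ge a^*=m\int_0^1(\theta u_2+(1-\theta)u_1)^{m-1}d\theta\ge\max\{u_1,u_2\}^{m-1}\ge2^{1-m}\tilde u^{m-1},
\]
so that $\tilde u^2/a^\delta\le2^{m-1}\tilde u^{3-m}$. This is bounded by a $\delta$-independent constant exactly when $m\le3$. That is the computation your remark about the power $3-m$ points to, and it is where the hypothesis $1<m\le 3$ enters.
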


We now prove Theorem 1.3(2).

\begin{proof}[Proof of Theorem 1.3 (2)]
First, we prove that \eqref{main-u-estimate} holds on \([0,T^*]\). Let \(\varphi=\varphi^{\delta,\xi}\) be the solution of \eqref{varphi-eq} corresponding to some \(\kappa\in(0,\frac12)\), \(\delta\in(0,1)\), and \(T\in(0,T^*]\). Using \(\varphi\psi\) as a test function in the equations for \(u_i\), \(i=1,2\), and then taking the difference, we obtain
\begin{align}\label{ubarvarphi}
-\int_0^T \int_{\mathbb{R}^N} \bar u \,\varphi_t \psi
&= -\int_0^T \int_{\mathbb{R}^N} \nabla (\varphi \psi)\cdot \nabla (u_2^m-u_1^m)
+ \int_0^T \int_{\mathbb{R}^N}\chi u_2\nabla (\varphi \psi)\cdot \nabla v_2 \nonumber\\
&\qquad- \int_0^T \int_{\mathbb{R}^N} \chi u_1\nabla (\varphi \psi)\cdot \nabla v_1
+ \int_0^T \int_{\mathbb{R}^N}a\bar u \,\varphi\psi
- b(u_2^2-u_1^2)\varphi\psi \nonumber\\
&= \int_0^T \int_{\mathbb{R}^N}
\left[
\Delta (\varphi\psi)\, a^*\bar u
+ \chi\bar u \nabla v_2\cdot\nabla (\varphi \psi)
+\chi u_1\nabla\bar v\cdot\nabla(\varphi\psi)
+ (a-b\tilde u)\bar u\,\varphi\psi
\right].
\end{align}
On the other hand, multiplying \eqref{varphi-eq} by \(\psi\), we obtain
\[
0 = \varphi_t\psi + a^\delta \Delta(\varphi \psi) + \chi\nabla v_2\cdot\nabla(\varphi\psi)
+ (a-b\tilde u)\psi\varphi + \xi\psi .
\]
Multiplying this identity by \(\bar u\) and integrating over space and time yields
\begin{align}\label{varphiubar}
0
&=
\int_0^T \int_{\mathbb{R}^N} \bar u \,\varphi_t \psi
+ \int_0^T \int_{\mathbb{R}^N}
\left[
\Delta (\varphi\psi)\, a^\delta\bar u
+ \chi\bar u \nabla v_2\cdot\nabla (\varphi \psi)
+ (a-b\tilde u)\bar u\,\varphi \psi
+ \bar u\xi\psi
\right].
\end{align}
Subtracting \eqref{ubarvarphi} from \eqref{varphiubar}, and using H\"older's inequality together with \eqref{4.K}, we obtain
\begin{align*}
    \int_0^T \int_{\mathbb{R}^N} \bar u \,\xi\psi
    &= \int_0^T\int_{\R^N} (a^* - a^\delta)\bar u\,\Delta(\varphi\psi)
     + \int_0^T\int_{\R^N} \chi u_1\nabla\bar v\cdot\nabla(\varphi\psi) \\
    &\le \left(\int_0^T \int_{\mathbb{R}^N} \frac{|a^* - a^\delta|^2}{a^\delta}|\bar u|^2\psi^2 \right)^{1/2}
    \left(\int_0^T \int_{\mathbb{R}^N}\frac{a^\delta}{\psi^2}|\Delta (\varphi\psi)|^2\right)^{1/2} \\
    &\quad+ |\chi|\|u_1\|_\infty
    \left(\int_0^T\int_{\R^N} |\nabla\bar v|^2\psi^2\right)^{1/2}
    \left(\int_0^T\int_{\R^N}|\nabla(\varphi\psi)|^2\frac{1}{\psi^2}\right)^{1/2} \\
    &\le \delta^{-1/2}
    \left(\int_0^T \int_{\mathbb{R}^N}|a^* - a^\delta|^2|\bar u|^2\psi^2 \right)^{1/2}
    \left(\int_0^T \int_{\mathbb{R}^N}\frac{a^\delta}{\psi^2}|\Delta (\varphi\psi)|^2\right)^{1/2} \\
    &\quad+ |\chi|\|u_1\|_\infty
    \left(\int_0^T\int_{\R^N} |\nabla\bar v|^2\psi^2\right)^{1/2}
    \left(\int_0^T\int_{\R^N}|\nabla(\varphi\psi)|^2\frac{1}{\psi^2}\right)^{1/2}.
\end{align*}
By Lemma \ref{varphilem} and \eqref{4.K}, it follows that for some constant \(C_1\), depending only on \(\chi\) and \(\|u_1\|_\infty\),
\begin{align}
\label{baru-xi}
    \int_0^T \int_{\mathbb{R}^N} \bar u \,\xi\psi
    &\le (C\delta)^{1/2}\eta^{1/2}
    \left(\int_0^T \int_{\mathbb{R}^N}\big(|\xi|^2+|\nabla \xi|^2\big)\right)^{1/2} \nonumber\\
    &\quad + C_1\eta^{1/2}
    \left(\int_0^T\int_{\R^N} |\nabla\bar v|^2\psi^2\right)^{1/2}
    \left(\int_0^T\int_{\R^N} \big(|\xi|^2+|\nabla \xi|^2\big)\right)^{1/2}.
\end{align}
Letting \(\delta\to 0\), we obtain 
\begin{equation}\label{main-u-estimate}
    \int_0^T \int_{\mathbb{R}^N} \bar u \,\xi \psi
    \lesssim
    \left(\int_0^T\int_{\R^N} |\nabla\bar v|^2\psi^2\right)^{\frac{1}{2}}
    \left(\int_0^T\int_{\R^N}\big(|\xi|^2+|\nabla \xi|^2\big)\right)^{\frac{1}{2}}.
\end{equation}

\medskip

Next, we will use \eqref{main-u-estimate} to prove $\bar v= 0$, by choosing a suitable \(\xi\) in \eqref{main-u-estimate}. Since \(\bar v \psi\) and \(\nabla(\bar v\psi)\) decay exponentially in space, there exists a sequence \(\{\xi_n\}\) of smooth, compactly supported functions such that
\[
\int_0^T \int_{\R^N} |\xi_n-\bar v \psi|^2 +|\nabla \xi_n -\nabla (\bar v\psi)|^2 \to 0
\qquad\text{as }n\to\infty.
\]
Hence \eqref{main-u-estimate} holds with \(\xi=\mu \bar v \psi\). This implies
\begin{align}\label{barubarv}
\mu\int_0^{T} \int_{\mathbb{R}^N} \bar u \bar v \psi^2
 &\le C\eta^{1/2}
 \left(\int_0^{T}\int_{\R^N} |\nabla\bar v|^2\psi^2\right)^{1/2}
 \left(\int_0^{T}\int_{\R^N}|\bar v\psi|^2+|\nabla (\bar v\psi)|^2\right)^{1/2} \\
&\le \frac{1}{4} \int_0^{T}\int_{\R^N} |\nabla\bar v|^2\psi^2
 + C\eta\int_0^{T}\int_{\R^N}|\bar v\psi|^2 + |\nabla(\bar v \psi)|^2.
\end{align}

Now consider the equations satisfied by \(v_1\) and \(v_2\), and take their difference.

When \(\tau=1\), we have
\[
\bar v_t = \Delta \bar v - \lambda\bar v +\mu \bar u,
\qquad \bar v(0,x)=0.
\]
Multiplying this equation by \(\bar v\psi^2\) and integrating over space and time yields, for any \(T\in[0,T^*]\),
\begin{align}\label{barv}
  &\frac{1}{2} \int_{\R^N} \bar v^2(T,x)\psi^2(x)\,dx
  + \int_0^{T}\int_{\R^N} |\nabla \bar v|^2\psi^2
  + \lambda\int_0^{T}\int_{\R^N}\bar v^2\psi^2 \nonumber\\
&\le \int_0^{T}\int_{\R^N} \bar v|\nabla \bar v||\nabla(\psi^2)|
 + \mu \int_0^{T}\int_{\R^N}\bar u \bar v \psi^2 \nonumber\\
&\le \kappa\int_0^{T}\int_{\R^N} |\nabla \bar v|^2\psi^2
 + \mu \int_0^{T}\int_{\R^N}\bar u \bar v \psi^2
 + \kappa\int_0^{T}\int_{\R^N}\bar v^2\psi^2.
\end{align}

When \(\tau=0\), we have
\[
0 = \Delta \bar v - \lambda\bar v +\mu \bar u.
\]
Multiplying this equation by \(\bar v\psi^2\) and integrating over space and time yields, for any \(T\in[0,T^*]\),
\begin{align}\label{barv2}
  &\int_0^{T}\int_{\R^N} |\nabla \bar v|^2\psi^2
  + \lambda\int_0^{T}\int_{\R^N}\bar v^2\psi^2 \nonumber\\
&\le \int_0^{T}\int_{\R^N} \bar v|\nabla \bar v||\nabla(\psi^2)|
 + \mu \int_0^{T}\int_{\R^N}\bar u \bar v \psi^2 \nonumber\\
&\le \kappa\int_0^{T}\int_{\R^N} |\nabla \bar v|^2\psi^2
 + \mu \int_0^{T}\int_{\R^N}\bar u \bar v \psi^2
 + \kappa\int_0^{T}\int_{\R^N}\bar v^2\psi^2.
\end{align}

Thus, in both cases, \eqref{barv} and \eqref{barv2} yield
\[
\int_0^{T}\int_{\R^N} |\nabla \bar v|^2\psi^2
+ \lambda\int_0^{T}\int_{\R^N}\bar v^2\psi^2
\le
\kappa\int_0^{T}\int_{\R^N} |\nabla \bar v|^2\psi^2
+ \mu \int_0^{T}\int_{\R^N}\bar u \bar v \psi^2
+ \kappa\int_0^{T}\int_{\R^N}\bar v^2\psi^2.
\]
Using \eqref{barubarv}, we obtain, for \(\eta<\frac{\kappa}{4C}\),
\begin{align*}
  &\int_0^{T}\int_{\R^N} |\nabla \bar v|^2\psi^2
  + \lambda\int_0^{T}\int_{\R^N}\bar v^2\psi^2 \\
  &\le (2\kappa +\tfrac14)\int_0^{T}\int_{\R^N} |\nabla \bar v|^2\psi^2
  + 3\kappa\int_0^{T}\int_{\R^N}\bar v^2\psi^2.
\end{align*}
Choosing
\(
\kappa<\min\left\{\frac18,\frac{\lambda}{6}\right\},
\)
it follows that \(\bar v(t,\cdot)=0\) for every \(t\in[0,T^*]\). Then \eqref{main-u-estimate} implies that \(\bar u=0\) on \([0,T^*]\). Therefore,
\[
u_1=u_2
\qquad\text{and}\qquad
v_1=v_2
\quad\text{on }[0,T^*].
\]
Uniqueness on any interval \([0,\infty)\) then follows by iteration.
\end{proof}

\bibliographystyle{plain}
\bibliography{reference}

\end{document}